\numberwithin{equation}{section}
\begin{document}

\title{\bf Convergence to the Self-similar Solutions to the Homogeneous Boltzmann Equation} \vskip 0.2cm
\author{{\bf Yoshinori Morimoto}\\
Graduate School of Human and Environmental Studies, Kyoto University\\
Kyoto 606-8501, Japan\\
E-mail address: morimoto@math.h.kyoto-u.ac.jp\\[2mm]
{\bf Tong Yang}\\
Department of Mathematics, City University of Hong Kong\\
Kowloon Tang, Hong Kong, China\\
E-mail address: matyang@cityu.edu.hk\\[2mm]
{\bf Huijiang Zhao}\\
School of Mathematics and Statistics, Wuhan University\\ Wuhan 430072, China\\
E-mail address: hhjjzhao@whu.edu.cn}
\date{ }

\maketitle

\vskip 0.2cm
\arraycolsep1.5pt
\newtheorem{Lemma}{Lemma}[section]
\newtheorem{Theorem}{Theorem}[section]
\newtheorem{Definition}{Definition}[section]
\newtheorem{Proposition}{Proposition}[section]
\newtheorem{Remark}{Remark}[section]
\newtheorem{Corollary}{Corollary}[section]

\begin{abstract}
The Boltzmann H-theorem implies that the solution to the Boltzmann equation tends to an equilibrium, that is, a Maxwellian when time tends to infinity. This has been proved in varies settings when the initial energy is finite. However, when the initial energy is infinite, the time asymptotic state is no longer described by a Maxwellian, but a self-similar solution obtained by Bobylev-Cercignani. The purpose of this paper is to rigorously justify this for the spatially homogeneous problem with Maxwellian molecule type cross section without angular cutoff.\\

{\bf AMS subject classifications:} 82C40; 76P05.

{\bf Keywords:} Measure valued solution, infinite energy, self-similar solutions, time asymptotic states.
\end{abstract}


\section{Introduction}
\setcounter{equation}{0}
Consider the homogeneous Boltzmann equation
\begin{equation}\label{Boltzmann-equation}
\partial_tf(t,v)=Q(f,f)(t,v),\quad v\in{\mathbb{R}}^3,\ \ t\in {\mathbb{R}^+}
\end{equation}
with initial data
\begin{equation}\label{Boltzmann-IC}
f(0,v)=f_0(v)\geq 0,\quad v\in {\mathbb{R}^3},
\end{equation}
where the non-negative unknown function $f(t,v)$ is the  distribution density
 function of particles with velocity $v\in{\mathbb{R}}^3$ at time $t\in {\mathbb{R}^+}$.
The right hand side of \eqref{Boltzmann-equation} is  the Boltzmann bilinear collision operator corresponding to the Maxwellian molecule type cross section
\begin{equation}\label{collision-term}
Q(g,f)(v)=\int\limits_{{\mathbb{R}}^3}\int\limits_{{\mathbb{S}}^2}\mathcal{B}\left(\frac{v-v_*}{|v-v_*|}\cdot \sigma\right)\Big(f(v')g(v'_*)-f(v)g(v_*)\Big)d\sigma dv_*.
\end{equation}
Here for $\sigma\in {\mathbb{S}}^2$
$$
v'=\frac{v+v_*}{2}+\frac{|v-v_*|}{2}\sigma,\quad v'_*=\frac{v+v_*}{2}-\frac{|v-v_*|}{2}\sigma,
$$
 from the conservation of momentum and energy,
$$
v'+v'_*=v+v_*,\quad  |v'|^2+|v'_*|^2=|v|^2+|v_*|^2.
$$

The Maxwellian molecule type cross section $\mathcal{B}(\tau)$ in \eqref{collision-term} is  a non-negative function depending only on the deviation angle $\theta=\cos^{-1}(\frac{v-v_*}{|v-v_*|}\cdot \sigma)$. As usual, $\theta$ is restricted to $0\leq\theta\leq\frac\pi 2$ by replacing $\mathcal{B}(\cos\theta)$ by its ``symmetrized'' version $[\mathcal{B}(\cos\theta)+\mathcal{B}(\pi-\cos\theta)]{\bf 1}_{0\leq\theta\leq \pi/2}$. Moreover, motivated by inverse power laws,
throughout this paper, we assume
\begin{equation}\label{cross-section}
\lim\limits_{\theta\to 0_+}\mathcal{B}(\cos\theta)\theta^{2+2s}=b_0
\end{equation}
for positive constants $s\in (0,1)$ and $b_0>0$.

As in \cite{Cannone-Karch-CPAM-2010, Cannone-Karch-KRM-2013, Gabetta-Toscani-Wennberg-JSP-1995, Morimoto-KRM-2012, Toscani-Villani-JSP-1999},  the Cauchy problem \eqref{Boltzmann-equation} and \eqref{Boltzmann-IC} is considered in the set of probability measures on $\mathbb{R}^3$. For presentation, we first  introduce some function spaces defined in the previous literatures. For $\alpha\in [0,2]$, $\mathcal{P}^\alpha(\mathbb{R}^3)$ denotes the probability density function $f$ on $\mathbb{R}^3$ such that
$$
\int\limits_{\mathbb{R}^3}|v|^\alpha f(v)dv<\infty,
$$
and moreover when $\alpha\geq 1$, it requires that
$$
\int\limits_{\mathbb{R}^3}v_j f(v)dv=0,\quad j=1,2,3.
$$
Following \cite{Cannone-Karch-CPAM-2010}, a characteristic function $\varphi(t,\xi)$ is the Fourier transform of  $f(t,v)\in\mathcal{P}^0(\mathbb{R}^3)$ with respect to $v$:
\begin{equation}\label{Fourier-transform}
\varphi(t,\xi)=\hat{f}(t,\xi)=\mathcal{F}(f)(t,\xi)=\int\limits_{\mathbb{R}^3}e^{-iv\cdot \xi}f(t,v)dv.
\end{equation}
For each $\alpha\in[0,2]$, set $\widetilde{\mathcal{P}}^\alpha(\mathbb{R}^3) ={\mathcal{F}}^{-1}\left({\mathcal{K}}^\alpha(\mathbb{R}^3)\right)$ with
$\mathcal{K}(\mathbb{R}^3)=\mathcal{F}(\mathcal{P}^0(\mathbb{R}^3))$ and
$$
{\mathcal{K}}^\alpha(\mathbb{R}^3)=\Big\{\varphi\in \mathcal{K}(\mathbb{R}^3):  \|\varphi-1\|_{{\mathcal{D}}^\alpha}<\infty\Big\}.
$$
Here the distance $\mathcal{D}^\alpha$ between
two suitable  functions $\varphi(\xi)$ and $\tilde{\varphi}(\xi)$ with $\alpha>0$ is defined by
$$
\left\|\varphi-\tilde{\varphi}\right\|_{\mathcal{D}^\alpha}\equiv\sup\limits_{0\not=\xi\in{\mathbb{R}}^3}\frac{\left|\varphi(\xi)-\tilde{\varphi}(\xi)\right|}{|\xi|^\alpha}.
$$
Then the set ${\mathcal{K}}^\alpha(\mathbb{R}^3)$ endowed with the distance
${\mathcal{D}}^\alpha$
is a complete metric space. It follows from Lemma 3.12 of \cite{Cannone-Karch-CPAM-2010}, that $\mathcal{K}^\alpha(\mathbb{R}^3)=\{1\}$ for all $\alpha>2$ and the embeddings $\{1\}\subset \mathcal{K}^\alpha(\mathbb{R}^3)\subset \mathcal{K}^\beta(\mathbb{R}^3)\subset \mathcal{K}(\mathbb{R}^3)$ hold for $2\geq \alpha\geq \beta\geq 0$.

The advantage of considering the Maxwellian molecule cross section
is that the Bobylev formula is in a simple form.
That is,  by taking the Fourier transform \eqref{Fourier-transform} of
the equation \eqref{Boltzmann-equation} leads to the following equation for the new unknown $\varphi=\varphi(t,\xi)$:
\begin{equation}\label{Reformulated-equation}
\partial_t\varphi(t,\xi)=\int\limits_{{\mathbb{S}^2}}\mathcal{B}\left(\frac{\xi\cdot\sigma}{|\xi|}\right)\left(\varphi\left(t,\xi^+\right)
\varphi\left(t,\xi^-\right)-\varphi(t,\xi)\right)d\sigma,
\end{equation}
where we have used
$$
\varphi(t,0)=\int\limits_{{\mathbb{R}}^3}f(t,v)dv=1.
$$
Here,
\begin{equation}\label{xi_pm}
\xi^+=\frac{\xi+|\xi|\sigma}{2},\quad \xi^-=\frac{\xi-|\xi|\sigma}{2}
\end{equation}
satisfying
\begin{equation}\label{relation-xi_pm}
\xi^++\xi^-=\xi,\quad |\xi^+|^2+|\xi^-|^2=|\xi|^2.
\end{equation}
From now on, we consider the Cauchy problem for \eqref{Reformulated-equation} with initial
condition
\begin{equation}\label{Reformulated-IC}
\varphi(0,\xi)=\varphi_0(\xi).
\end{equation}
For $\alpha\in (2s,2]$, it is shown in \cite{Cannone-Karch-CPAM-2010, Morimoto-KRM-2012, Morimoto-Wang-Yang-2014} that this Cauchy problem
admits a unique global solution $\varphi(t,\xi)\in C\left([0,\infty), \mathcal{K}^\alpha({\mathbb{R}^3})\right)$ for every $\varphi_0(\xi)\in\mathcal{K}^\alpha({\mathbb{R}^3})$. Moreover, $f(t,\cdot)\in L^1({\mathbb{R}^3}) \cap H^\infty(\mathbb{R}^3)$ for any $t>0$ if $\mathcal{F}^{-1}(\varphi_0)(v)$ is not a single Dirac mass, cf. \cite{Morimoto-Wang-Yang-2014, Morimoto-Yang-2012}.

To study the large time behavior of the solution, it depends on whether the
initial energy is finite or not, and in the above setting, it depends on
the parameter $\alpha$, cf. \cite{Arkeryd-CMP-1982, Cannone-Karch-CPAM-2010, Cannone-Karch-KRM-2013, Gabetta-Toscani-Wennberg-JSP-1995, Morimoto-KRM-2012, Morimoto-Wang-Yang-2014, Pulvirenti-Toscani-AMPA-1996, Tanaka-WVG-1978, Toscani-Villani-JSP-1999} and the references cited therein:
\begin{itemize}
\item When $\alpha=2$, the initial datum has finite energy so that
the solution tends to the Maxwellian defined by the initial datum.
This was indeed proved in the early work by Tanaka \cite{Tanaka-WVG-1978}
using probability theory
in the weak convergence in probability. And it was also proved later
 in \cite{Gabetta-Toscani-Wennberg-JSP-1995, Pulvirenti-Toscani-AMPA-1996,  Toscani-Villani-JSP-1999} by using analytic methods about convergence in Toscani metrics. 
  Moreover, if some moment higher than the second
 order is assumed to be bounded, the convergence in the $\mathcal{D}^{2+\delta}-$distance with $\delta>0$ is shown to be exponentially decay in time, cf. \cite{Gabetta-Toscani-Wennberg-JSP-1995};
\item When $2s<\alpha<2$, the initial energy
is  infinite  so that the solution will no longer tend to an equilibrium, but
to a
self-similar solution
$$
f_{\alpha,K}(t,v)=e^{-3\mu_\alpha t}\Psi_{\alpha, K}\left(ve^{-\mu_\alpha t}\right)
$$
constructed in \cite{Bobylev-Cercignani-JSP-2002a, Bobylev-Cercignani-JSP-2002b},
where
\begin{equation}\label{mu-alpha}
\mu_\alpha=\frac{\lambda_\alpha}{\alpha}, \quad \lambda_\alpha\equiv\int\limits_{\mathbb{S}^2}\mathcal{B}\left(\frac{\xi\cdot\sigma}{|\xi|}\right)\left(\frac{|\xi^-|^\alpha+|\xi^+|^\alpha}{|\xi|^\alpha}-1\right)d\sigma.
\end{equation}
Here, $K>0$ is any given constant and $\Psi_{\alpha,K}(v)$ is a radially symmetric non-negative function satisfying
\begin{equation}\label{property-SS}
\int\limits_{{\mathbb{R}^3}}\Psi_{\alpha,K}(v)dv=1,\quad \hat{\Psi}_{\alpha,K}(\xi)\in {\mathcal{K}}^\alpha({\mathbb{R}^3}),\quad \lim\limits_{|\eta|\to 0}\frac{1-\hat{\Psi}_{\alpha,K}(\eta)}{|\eta|^\alpha}=K.
\end{equation}
The regularity of
the self-similar solution in $H^\infty({\mathbb{R}^3})$ was proved
 in \cite{ Morimoto-Wang-Yang-2014, Morimoto-Yang-2012}.
However,
the convergence   to the self-similar solution $f_{\alpha,K}(t,v)$ is not well understood even though there are some
works, cf. \cite{Bobylev-Cercignani-JSP-2002b, Cannone-Karch-CPAM-2010, Cannone-Karch-KRM-2013} about pointwise convergence in radially symmetric setting or in
weak topology with scaling. In fact, even how to show convergence in distribution
sense has been a problem.
\end{itemize}

The main difficulties in studying the convergence to the self-similar solutions
come from the fact that the self-similar solution has infinite energy and
it decays to zero exponentially in time except in $L_1$ norm. The purpose of
this paper is to show strong
 convergence holds when $\alpha\in(\max\{2s,1\}, 2]$
under some conditions on the initial perturbation.

For this, we first consider the $\mathcal{D}^{2+\delta}$ distance between
two solutions.  For $f_0(v)\in \widetilde{\mathcal{P}}^\alpha(\mathbb{R}^3)$
and $ g_0(v)\in \widetilde{\mathcal{P}}^\alpha(\mathbb{R}^3)$, as in \cite{Gabetta-Toscani-Wennberg-JSP-1995, Ikenberry-Truesdell-JRMA-1956}, set
\begin{eqnarray}\label{correction-funtion}
\widetilde{P}(t,\xi)&=&e^{-At}\widetilde{P}(0,\xi),\nonumber\\
\widetilde{P}(0,\xi)&=&\frac 12\sum\limits_{j,l=1}^3\xi_j\xi_l P_{jl}(0)X(\xi),\\
P_{jl}(0)&=&\int\limits_{\mathbb{R}^3}\left(v_jv_l-\frac{\delta_{jl}}{3}|v|^2\right)(f_0(v)-g_0(v))dv,\nonumber
\end{eqnarray}
where
\begin{equation}\label{A}
A=\frac 34\int\limits_{\mathbb{S}^2}\mathcal{B}\left(\frac{\sigma\cdot\xi}{|\xi|}\right)
\left(1-\left(\frac{\sigma\cdot\xi}{|\xi|}\right)^2\right)d\sigma,
\end{equation}
$\delta_{jl}$ is the Kronecker delta and $X(\xi)\equiv X(|\xi|)$ is a  smooth radially symmetric function satisfying $0\leq X(\xi)\leq 1$ and $X(\xi)=1$ for $|\xi|\leq 1$ and $X(\xi)=0$ for $|\xi|\geq 2$.

The first result in this paper on the $\mathcal{D}^{2+\delta}$
time asymptotic stability of the solutions is given by

\begin{Theorem}\label{Thm1.1} Suppose  $f_0(v), g_0(v)\in \widetilde{\mathcal{P}}^\alpha(\mathbb{R}^3)$
for $\alpha\in(\max\{2s,1\}, 2]$.  Let $\hat{f}(t,\xi)$ and $ \hat{g}(t,\xi)$ be the corresponding two global solutions of the Cauchy problem \eqref{Reformulated-equation} with initial data $\hat{f}_0(\xi)$ and $\hat{g}_0(\xi)$ respectively. Assume for some
 $\delta\in(0,\alpha]\cap\left(0, \frac{A}{\mu_\alpha}\right)$, the initial data satisfy
\begin{equation}\label{zero-2-perturbation}
\int\limits_{{\mathbb{R}^3}}|v|^2(f_0(v)-g_0(v))dv=0,
\end{equation}
\begin{equation}\label{high-integrability}
\left\{
\begin{array}{l}
{\displaystyle\int\limits_{{\mathbb{R}^3}}}|v|^{2}|f_0(v)-g_0(v)|dv<+\infty,\\[5mm]
\left\|\hat{f}_0(\cdot)-\hat{g}_0(\cdot)-\widetilde{P}(0,\cdot)\right\|_{\mathcal{D}^{2+\delta}}<+\infty.
\end{array}
\right.
\end{equation}
Then there exists some positive constant $C_1>0$ independent of $t$ and $\xi$ such that
\begin{equation}\label{stability-estimate}
\left\|\hat{f}(t,\cdot)-\hat{g}(t,\cdot)-\widetilde{P}(t,\cdot)\right\|_{\mathcal{D}^{2+\delta}}\leq C_1e^{-\eta_0t}, \quad t\geq 0.
\end{equation}
Here, $\eta_0=\min\left\{A-\delta\mu_{\alpha}, B\right\}$ and
\begin{equation}\label{B}
B=\int\limits_{\mathbb{S}^2}\mathcal{B}\left(\frac{\sigma\cdot\xi}{|\xi|}\right)\left(1-\left|\cos\frac\theta 2\right|^{2+\delta}-\left|\sin\frac\theta 2\right|^{2+\delta}\right)d\sigma,\quad \cos\theta=\frac{\sigma\cdot\xi}{|\xi|}.
\end{equation}
\end{Theorem}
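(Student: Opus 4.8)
The plan is to work entirely on the Fourier side and to track
$w(t,\xi)=\hat f(t,\xi)-\hat g(t,\xi)-\widetilde{P}(t,\xi)$ through the linear (non‑autonomous) equation it satisfies. Writing $h=\hat f-\hat g$ and using $\hat f(\xi^+)\hat f(\xi^-)-\hat g(\xi^+)\hat g(\xi^-)=h(\xi^+)\hat f(\xi^-)+\hat g(\xi^+)h(\xi^-)$, subtracting the two copies of \eqref{Reformulated-equation} gives $\partial_t h=\mathcal{L}_{f,g}(t)h$ with
\[
\mathcal{L}_{f,g}(t)\psi(\xi)=\int_{\mathbb{S}^2}\mathcal{B}\!\left(\tfrac{\xi\cdot\sigma}{|\xi|}\right)\Big(\psi(\xi^+)\hat f(t,\xi^-)+\hat g(t,\xi^+)\psi(\xi^-)-\psi(\xi)\Big)d\sigma .
\]
Since $\partial_t\widetilde{P}=-A\widetilde{P}$ and $\hat f=1+(\hat f-1)$, $\hat g=1+(\hat g-1)$, this yields $\partial_t w=\mathcal{L}_{f,g}(t)w+\mathcal{S}(t,\xi)$, where $\mathcal{S}=\big(\mathcal{L}_0\widetilde{P}+A\widetilde{P}\big)+\mathcal{N}$, with $\mathcal{L}_0\psi(\xi)=\int_{\mathbb{S}^2}\mathcal{B}\big(\psi(\xi^+)+\psi(\xi^-)-\psi(\xi)\big)d\sigma$ the constant–coefficient linearization and $\mathcal{N}(t,\xi)=\int_{\mathbb{S}^2}\mathcal{B}\big(\widetilde{P}(t,\xi^+)(\hat f(t,\xi^-)-1)+(\hat g(t,\xi^+)-1)\widetilde{P}(t,\xi^-)\big)d\sigma$. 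The decisive point is that $\mathcal{S}$ does not involve $w$.

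The two algebraic inputs are: (i) by \eqref{xi_pm}–\eqref{relation-xi_pm}, $|\xi^+|=|\xi|\cos\tfrac\theta2$, $|\xi^-|=|\xi|\sin\tfrac\theta2$ with $\cos\theta=\sigma\cdot\xi/|\xi|$, so $\mathcal{L}_0(|\xi|^{2+\delta})=-B|\xi|^{2+\delta}$ with $B$ from \eqref{B}; and (ii) for any quadratic form $Q(\xi)=\tfrac12\sum_{j,l}\xi_j\xi_lP_{jl}$ with $\sum_jP_{jj}=0$ one has $\mathcal{L}_0Q=-AQ$, from the symmetry identity $\int_{\mathbb{S}^2}\mathcal{B}\,\sigma_j\sigma_l\,d\sigma=a\,\delta_{jl}+b\,n_jn_l$ ($n=\xi/|\xi|$) together with the value of $A$ in \eqref{A}. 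The matrix $P_{jl}(0)$ in \eqref{correction-funtion} is automatically trace‑free (its trace is $\int|v|^2(f_0-g_0)-\int|v|^2(f_0-g_0)$), and the cutoff $X$ is inactive on $\xi$ and on $\xi^\pm$ once $|\xi|\le1$; hence $\mathcal{L}_0\widetilde{P}+A\widetilde{P}$ vanishes on $\{|\xi|\le1\}$. Being continuous, supported in $\{|\xi|\ge1\}$, of size $O(|\xi|^2)$ and carrying the factor $e^{-At}$, it satisfies $\|\mathcal{L}_0\widetilde{P}+A\widetilde{P}\|_{\mathcal{D}^{2+\delta}}\le Ce^{-At}$. (Assumption \eqref{zero-2-perturbation} removes the surviving $|\xi|^2$–component of $\hat f_0-\hat g_0-\widetilde{P}(0,\cdot)$ — the kernel mode $\mathcal{L}_0|\xi|^2=0$ — so that \eqref{high-integrability} is the natural finiteness requirement; $\alpha>1$ enters through the vanishing first moments.)

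Next, since $|\hat f|,|\hat g|\le1$, evaluating $\tfrac{d}{dt}|w(t,\xi)|\le\operatorname{Re}\big(\tfrac{\overline{w}}{|w|}\partial_tw\big)$ at a point $\xi^\ast$ realizing the supremum in $\Phi(t):=\|w(t)\|_{\mathcal{D}^{2+\delta}}$ and bounding the integrand by $|w(\xi^{\ast+})|+|w(\xi^{\ast-})|-|w(\xi^\ast)|\le\Phi(t)|\xi^\ast|^{2+\delta}\big(\cos^{2+\delta}\tfrac\theta2+\sin^{2+\delta}\tfrac\theta2-1\big)$ gives, using (i),
\[
\frac{d}{dt}\Phi(t)\le-B\,\Phi(t)+\|\mathcal{S}(t)\|_{\mathcal{D}^{2+\delta}},\qquad\text{hence}\qquad \Phi(t)\le e^{-Bt}\Phi(0)+\int_0^te^{-B(t-\tau)}\|\mathcal{S}(\tau)\|_{\mathcal{D}^{2+\delta}}\,d\tau .
\]
To bound $\mathcal{N}$, I would first record the a priori bound $\|\hat f(t)-1\|_{\mathcal{D}^\alpha},\|\hat g(t)-1\|_{\mathcal{D}^\alpha}\le Ce^{\lambda_\alpha t}$ (the same comparison with exponent $\alpha$ against the stationary solution $1$, using $\mathcal{L}_0(|\xi|^\alpha)=\lambda_\alpha|\xi|^\alpha$, cf. \eqref{mu-alpha}, and $\lambda_\alpha=\alpha\mu_\alpha$), then interpolate with $|\hat f-1|,|\hat g-1|\le2$ to get $|\hat f(t,\eta)-1|,|\hat g(t,\eta)-1|\le Ce^{\gamma\mu_\alpha t}|\eta|^{\gamma}$ for all $\gamma\in[0,\alpha]$. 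Inserting this and $|\widetilde{P}(t,\eta)|\le Ce^{-At}\min\{|\eta|^2,1\}$ into $\mathcal{N}$, splitting the $\sigma$–integral according to whether $|\xi^-|$ (resp. $|\xi^+|$) lies below or above the scale $e^{-\mu_\alpha t}$, and using $\int_{\mathbb{S}^2}\mathcal{B}(\cos\theta)(\sin\tfrac\theta2)^{\alpha}d\sigma<\infty$ (this is where $\alpha>2s$ is used) on the small–argument part and the exponent $\gamma=\delta$ (this uses $\delta\le\alpha$) on the remainder, one obtains $\|\mathcal{N}(t)\|_{\mathcal{D}^{2+\delta}}\le Ce^{-(A-\delta\mu_\alpha)t}$. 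Feeding $\|\mathcal{S}\|_{\mathcal{D}^{2+\delta}}\le Ce^{-At}+Ce^{-(A-\delta\mu_\alpha)t}$ into the Duhamel inequality, together with $\Phi(0)<\infty$ from \eqref{high-integrability} and $\delta<A/\mu_\alpha$, gives $\Phi(t)\le C_1e^{-\min\{B,\,A-\delta\mu_\alpha\}t}$, i.e. \eqref{stability-estimate}.

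The hard part is the estimate of $\mathcal{N}$: because of the no–cutoff assumption \eqref{cross-section} the cross section is not integrable, so the factor $\hat f-1$ (or $\hat g-1$) at the small Fourier argument cannot be decoupled from the quadratic weight of $\widetilde{P}$ and its $e^{-At}$ decay; the sharp rate $A-\delta\mu_\alpha$ surfaces only after the multi–scale splitting of the collision integral at $|\xi^\pm|\sim e^{-\mu_\alpha t}$, which is designed precisely to convert the growth $e^{\delta\mu_\alpha t}$ of $|\hat f(t,\cdot)-1|$ into decay once it is weighted against the $e^{-At}$ decay and the $\alpha$–power contributed by $\widetilde{P}$. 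A secondary, routine, difficulty is that the supremum defining $\|\cdot\|_{\mathcal{D}^{2+\delta}}$ need not be attained (it may only be approached as $\xi\to0$), so the comparison argument of the third step must be carried out on a regularized functional and the inequality passed to the limit.
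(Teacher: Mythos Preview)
Your overall strategy matches the paper's: derive the linear non-autonomous equation for $w=\hat f-\hat g-\widetilde P$, identify the source, use the algebraic identities $\mathcal{L}_0|\xi|^{2+\delta}=-B|\xi|^{2+\delta}$ and $\mathcal{L}_0 Q=-AQ$ for trace-free quadratic forms (so that $\mathcal{L}_0\widetilde P+A\widetilde P=0$ on $|\xi|\le1$), and close by a Gronwall argument in $\mathcal{D}^{2+\delta}$. The paper, however, does \emph{not} work directly with the singular kernel. It inserts two layers of approximation: an angular cutoff $\mathcal{B}_n=\min\{\mathcal{B},n\}$ with $\bar\sigma_n=\int\mathcal{B}_n\,d\sigma<\infty$, and a large-velocity truncation $f_{0R},g_{0R}$ of the data so that the approximations have finite second moment. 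All the estimates are done for the doubly approximate problem with constants uniform in $n,R$, and one passes to the limit pointwise in $(t,\xi)$.

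Your direct approach has a genuine gap precisely at the step you flag as hardest. With the cutoff, the analogue of your $\mathcal{N}$ (the paper's $I_7$) is bounded by the trivial inequality
\[
\frac{1}{\bar\sigma_n}\int_{\mathbb{S}^2}\mathcal{B}_n\Big[\cos^2\tfrac\theta2\sin^{\delta}\tfrac\theta2+\sin^2\tfrac\theta2\cos^{\delta}\tfrac\theta2\Big]\,d\sigma\le \frac{1}{\bar\sigma_n}\int_{\mathbb{S}^2}\mathcal{B}_n\,d\sigma=1,
\]
which immediately yields $|I_7|\le C|\xi|^{2+\delta}e^{-(A_n-\delta\mu_\alpha^n)t}$ uniformly in $n$; after rescaling and letting $n\to\infty$ this becomes the claimed rate $A-\delta\mu_\alpha$. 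Without the cutoff, the integral $\int\mathcal{B}\cos^2(\theta/2)\sin^{\delta}(\theta/2)\,d\sigma$ diverges whenever $\delta\le 2s$, and your multi-scale splitting at $|\xi^-|\sim e^{-\mu_\alpha t}$ recovers only the rate $A-2s\mu_\alpha$ on the range $e^{-\mu_\alpha t}<|\xi|\le1$: both the small-angle piece (using exponent $\alpha$) and the remainder (using exponent $\delta$) produce a factor $\sim e^{2s\mu_\alpha t}|\xi|^{2s}$ rather than $e^{\delta\mu_\alpha t}|\xi|^{\delta}$. Since the theorem allows $\delta$ arbitrarily small, this does not reproduce the stated $\eta_0$. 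The issue you call ``secondary, routine'' is also not routine: propagation of $\|w(t)\|_{\mathcal{D}^{2+\delta}}<\infty$ in the paper is bootstrapped from $\|\hat F^n-\hat G^n\|_{\mathcal{D}^{2}}<\infty$, and for $\alpha<2$ neither solution lies in $\mathcal{K}^2$, so the $\mathcal{D}^2$-stability estimate is unavailable directly; the paper manufactures it via the compactly supported truncations $f_{0R},g_{0R}\in\mathcal{P}^2$ and a uniform-in-$R$ bound (Lemma~2.4) before passing to the limit.
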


Note that for the $\mathcal{D}^{2+\delta}$ convergence to the self-similar solution,
one can simply take $g_0=\Psi_{\alpha,K}(v)$. Based on this, in order to obtain
a convergence in strong topology, such as in the Sobolev norms, we will give
a uniform in time estimate on the solution in $H^N$-norm that is given in

\begin{Theorem}\label{Thm1.2} For $\max\left\{1,2s\right\}<\alpha<2$,
  assume that $f_0(v)\in  \widetilde{\mathcal{P}}^\alpha(\mathbb{R}^3)$ satisfies
\eqref{zero-2-perturbation}-\eqref{high-integrability} and
 is not a single Dirac mass,
$g_0(v)=\Psi_{\alpha,K}(v)$. Then for any given positive constant $t_1>0$ and any $N\in{\mathbb{N}}$,  there exists a positive constant $C_2(t_1, N)$ independent of
$t$ such that
\begin{equation}\label{uniform-H-N-bound}
\sup\limits_{t\in[t_1,+\infty)}\Big\{\left\|f(t,\cdot)\right\|_{H^N}\Big\}\leq C_2(t_1,N).
\end{equation}
Consequently, there exists a positive constant $C_3(t_1,N)$ independent of $t$ such that
\begin{equation}\label{H-N-decay}
\Big\|f(t,\cdot)-f_{\alpha, K}(t,\cdot)\Big\|_{H^N}=\Big\|f(t,v)-e^{-3\mu_\alpha t}\Psi_{\alpha, K}\left(ve^{-\mu_\alpha t}\right)\Big\|_{H^N}\leq C_3(t_1,N) e^{-\frac{\eta_0 t}{2}}
\end{equation}
holds for any $t\geq t_1$.
Since
\begin{equation}\label{decay-similarity}
e^{-\frac{3\mu_\alpha t}{2}}\Big\|\Psi_{\alpha,K}(\cdot)\Big\|_{L^2}\leq\Big\|f_{\alpha,K}(t,\cdot)\Big\|_{H^N}\leq e^{-\frac{3\mu_\alpha t}{2}}\Big\|\Psi_{\alpha,K}(\cdot)\Big\|_{H^N},
\end{equation}
 \eqref{H-N-decay} and \eqref{decay-similarity} imply that  when
\begin{equation}\label{mu-eta}
\mu_\alpha<\frac{\eta_0}{3},
\end{equation}
the convergence rate given in \eqref{H-N-decay} is faster than the decay rate of the self-similar solution itself. Hence in this case, the  infinite energy  solution $f(t,v)$ converges to the
self-similar solution $f_{\alpha,K}(t,v)$ exponentially in time.
\end{Theorem}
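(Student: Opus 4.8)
The plan is to prove the two main estimates \eqref{uniform-H-N-bound} and \eqref{H-N-decay} in turn; the first is the substantial one and everything else follows from it by soft arguments. Note first that $g(t,v)=f_{\alpha,K}(t,v)$: indeed $f_{\alpha,K}(0,\cdot)=\Psi_{\alpha,K}=g_0$ and $f_{\alpha,K}(t,v)=e^{-3\mu_\alpha t}\Psi_{\alpha,K}(ve^{-\mu_\alpha t})$ solves \eqref{Boltzmann-equation}, so uniqueness forces $\hat g(t,\xi)=\hat\Psi_{\alpha,K}(\xi e^{\mu_\alpha t})$. Once \eqref{uniform-H-N-bound} is known, \eqref{H-N-decay} will come from interpolating the low-frequency decay of Theorem~\ref{Thm1.1} against the uniform high-regularity bound, \eqref{decay-similarity} is a pure scaling identity, and \eqref{mu-eta} a trivial comparison of exponents.

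For \eqref{uniform-H-N-bound} I would pass to self-similar variables $F(t,w):=e^{3\mu_\alpha t}f(t,we^{\mu_\alpha t})$, which solves $\partial_tF=Q(F,F)+\mu_\alpha\nabla_w\cdot(wF)$, i.e. on the Fourier side $\partial_t\hat F=\widehat{Q(F,F)}-\mu_\alpha\,\xi\cdot\nabla_\xi\hat F$, and for which $\Psi_{\alpha,K}$ is a stationary state. Because of the scalings $\|f(t)\|_{\dot H^N}=e^{-(N+\frac32)\mu_\alpha t}\|F(t)\|_{\dot H^N}$ and $\|f(t)\|_{L^2}=e^{-\frac32\mu_\alpha t}\|F(t)\|_{L^2}$, it suffices to bound $\|F(t)\|_{H^N}$ uniformly in $t$. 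The instantaneous smoothing quoted in the introduction gives $F(t_1,\cdot)\in H^\infty$, so it remains to propagate this uniformly. Running a weighted $L^2$ (i.e. $H^N$) energy estimate on $\hat F$ and using the Bobylev form, the collision term furnishes the coercive quantity $-\int_{\mathbb{R}^3}\mu_F(\xi)\langle\xi\rangle^{2N}|\hat F(\xi)|^2\,d\xi$, where $\mu_F(\xi)=\int_{\mathbb{S}^2}\mathcal{B}\!\left(\tfrac{\xi\cdot\sigma}{|\xi|}\right)\bigl(1-\mathrm{Re}\,\hat F(\xi^-)\bigr)d\sigma$; exactly as in the cutoff-free regularity theory one has $\mu_F(\xi)\gtrsim|\xi|^{2s}$ for $|\xi|\ge1$, and --- this is the crucial point --- the implied constant is uniform in time because $F(t)\to\Psi_{\alpha,K}$ spreads out, so $\hat F(t,\eta)$ stays bounded away from $1$ at frequencies $|\eta|\gtrsim1$ uniformly in $t$. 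All remaining terms are lower order: the transport term contributes a growth term of size $\simeq\mu_\alpha\bigl(N+\tfrac32\bigr)\|F\|_{H^N}^2$, the weight $\langle\xi\rangle^{2N}$ produces a commutator against the shift $\xi\mapsto\xi^+$, and there is a genuinely nonlinear remainder; all of these are absorbed into $c\|F\|_{H^{N+s}}^2$ using the interpolation $\|F\|_{H^N}^2\le\varepsilon\|F\|_{H^{N+s}}^2+C_\varepsilon\|F\|_{L^2}^2$ and the elementary uniform bound $\int_{|\xi|\le R}|\hat F(t,\xi)|^2\,d\xi\le CR^3$ (here $|\hat F|\le1$ is what makes the Maxwellian-molecule collision term so manageable at high frequency). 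For the commutator and the moment-type pieces one also uses that $\int|w|^\alpha F(t,w)\,dw=e^{-\lambda_\alpha t}\int|v|^\alpha f(t,v)\,dv$ stays bounded (since the $|v|^\alpha$-moment of $f$ grows at the rate $e^{\lambda_\alpha t}$), so $\hat F(t,\cdot)$ is uniformly Lipschitz. The outcome is $\frac{d}{dt}\|F\|_{H^N}^2+c\|F\|_{H^N}^2\le C$, and Gronwall applied from $t=t_1$ gives the uniform bound, by induction on $N$ with the base case $N=0$ treated identically.

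Granting \eqref{uniform-H-N-bound}, I would prove \eqref{H-N-decay} by splitting frequencies at a radius $R=R(t)$. For $|\xi|\le R$, Theorem~\ref{Thm1.1} with $g_0=\Psi_{\alpha,K}$, together with $|\widetilde P(t,\xi)|\le Ce^{-At}|\xi|^2\mathbf{1}_{|\xi|\le 2}$ and $A>\eta_0$, gives $|\hat f(t,\xi)-\hat g(t,\xi)|\le Ce^{-\eta_0 t}|\xi|^{2+\delta}+Ce^{-At}|\xi|^2$, hence $\int_{|\xi|\le R}\langle\xi\rangle^{2N}|\hat f-\hat g|^2\,d\xi\le CR^{2N+7+2\delta}e^{-2\eta_0 t}$; for $|\xi|>R$, $\int_{|\xi|>R}\langle\xi\rangle^{2N}|\hat f-\hat g|^2\,d\xi\le R^{-2M}\|f-g\|_{H^{N+M}}^2\le CR^{-2M}$ uniformly in $t\ge t_1$, using \eqref{uniform-H-N-bound} for $f$ and $\|g(t)\|_{H^{N+M}}\le e^{-\frac32\mu_\alpha t}\|\Psi_{\alpha,K}\|_{H^{N+M}}$ for $g$. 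Optimizing over $R$ and choosing $M$ so large that $2M\ge2N+7+2\delta$ yields $\|f(t)-g(t)\|_{H^N}^2\le Ce^{-\eta_0 t}$, which is \eqref{H-N-decay}. Finally, \eqref{decay-similarity} follows from $\|f_{\alpha,K}(t)\|_{L^2}=e^{-\frac32\mu_\alpha t}\|\Psi_{\alpha,K}\|_{L^2}$ and $\|f_{\alpha,K}(t)\|_{\dot H^N}=e^{-(N+\frac32)\mu_\alpha t}\|\Psi_{\alpha,K}\|_{\dot H^N}\le e^{-\frac32\mu_\alpha t}\|\Psi_{\alpha,K}\|_{\dot H^N}$, and \eqref{mu-eta} is immediate since $\mu_\alpha<\eta_0/3$ means $\tfrac32\mu_\alpha<\tfrac12\eta_0$.

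The main obstacle is the time-uniformity in \eqref{uniform-H-N-bound}. The instantaneous $H^\infty$-smoothing is already in hand, but its constants a priori deteriorate as $t\to\infty$; the real issue is to show that the dissipation produced by the non-cutoff collision operator does not degenerate in the large-time limit. This is exactly what the self-similar change of variables clarifies: there the controlling quantities (mass, the $|w|^\alpha$-moment, and how far $\hat F$ is from $1$ at unit frequencies) stay bounded uniformly in $t$ because the solution converges to the fixed profile $\Psi_{\alpha,K}$. Even so, one still has to beat the transport contribution $\mu_\alpha\nabla_w\cdot(wF)$ --- whose weight in the $H^N$ estimate grows linearly in $N$ --- by the fractional gain of $s$ derivatives; this is where both the restriction $s<1$ and the interpolation against the harmless low frequencies enter.
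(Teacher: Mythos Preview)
Your overall strategy is right and close in spirit to the paper's, but the paper makes a different and simpler choice at the core step: it runs the $H^N$ energy estimate in the \emph{original} variables, not in self-similar variables. The key lemma (Lemma~\ref{coercivity-estimate}) is a coercivity bound that \emph{grows} in time,
\[
\int_{\mathbb{S}^2}\mathcal{B}\Bigl(\tfrac{\xi\cdot\sigma}{|\xi|}\Bigr)\bigl(1-|\hat f(t,\xi^-)|\bigr)\,d\sigma\ \ge\ \kappa\,e^{2s\mu_\alpha t}\,|\xi|^{2s},\qquad |\xi|\ge 2,\ t\ge t_1,
\]
obtained by comparing $\hat f(t,\xi)$ to $\hat\Psi_{\alpha,K}(e^{\mu_\alpha t}\xi)$ via Theorem~\ref{Thm1.1} and invoking $1-|\hat\Psi_{\alpha,K}(\eta)|\gtrsim\min\{1,|\eta|^2\}$. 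One then multiplies \eqref{Reformulated-equation} by $M(\xi)^2\overline{\varphi}$ with $M(\xi)=|\xi|^{2N}\bigl(1-X(|\xi|^2/4)\bigr)$ and splits into a coercive piece $J_1$, a cancellation-lemma piece $J_2$, and a commutator $J_3$; since $J_2,J_3$ cost only $O(1)\!\int |M\varphi|^2+C_N$ while $J_1$ produces $-\kappa\,e^{2s\mu_\alpha t}\!\int|\xi|^{2s}|M\varphi|^2$, the growing factor swallows everything for $t\ge t_1$ and Gronwall gives \eqref{uniform-H-N-bound}. There is no transport term and no $N$-dependent competition to manage.

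Your self-similar route can be made to work, but it is more delicate, and two points of your sketch need repair. First, the uniform-in-$t$ coercivity cannot be justified by ``$F(t)\to\Psi_{\alpha,K}$'' --- that is circular; you have to feed in Theorem~\ref{Thm1.1} quantitatively (this is exactly what the paper does, just expressed in the original $\xi$). Second, $f_0\in\widetilde{\mathcal P}^\alpha$ does \emph{not} give $\int|v|^\alpha f_0<\infty$ (only $\beta<\alpha$ moments, Lemma~\ref{imbedding}), so the claim that $\int|w|^\alpha F(t,w)\,dw$ stays bounded and that $\hat F$ is uniformly Lipschitz is unjustified; fortunately it is also unnecessary --- the commutator terms are controlled purely by $|\hat F|\le 1$ and the structure $|\xi|^2-|\xi^+|^2=|\xi|^2\sin^2(\theta/2)$, exactly as in the paper's treatment of $J_3$. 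The transport contribution $\mu_\alpha(N+\tfrac32)\|F\|_{H^N}^2$ you must then absorb by splitting at a large fixed radius $R=R(N)$ and using $|\hat F|\le 1$ on $\{|\xi|\le R\}$; this works but is the extra bookkeeping that the paper's framing avoids. Your derivation of \eqref{H-N-decay} by frequency splitting is correct; the paper reaches the same conclusion slightly differently, writing $|\hat f-\hat g|^2\lesssim e^{-\eta_0 t}|\xi|^{2+\delta}|\hat f-\hat g|+|\widetilde P|^2$ pointwise and applying Cauchy--Schwarz against the uniform higher-order bound.
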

\begin{Remark} Since $\mu_\alpha\to 0+$ as $\alpha\to 2$,
the condition \eqref{mu-eta} holds when $\alpha$ is  close to $2$.
\end{Remark}

For the case with finite energy, the above stability estimates give a better
convergence description on the solution obtained in the previous literatures, which extends the exponential convergence result in the Toscani metrics $\mathcal{D}^{2+\delta}$ with $\delta>0$, cf. \cite{Gabetta-Toscani-Wennberg-JSP-1995}, to the Sobolev space $H^N(\mathbb{R}^3)$ for any $N\in{\mathbb{N}}$. In
fact, we have

\begin{Corollary}\label{decay-Maxwellian} Suppose that $f_0(v)\in  \mathcal{P}^2(\mathbb{R}^3)$   is not a single Dirac mass and satisfies
\begin{equation}\label{high-Maxwellian-integrability}
\int\limits_{\mathbb{R}^3}|v|^2f_0(v)dv=3,\quad
\left\|\hat{f}_0(\cdot)-\mu(\cdot)-\widetilde{P}(0,\cdot)\right\|_{\mathcal{D}^{2+\delta}}<+\infty,
\end{equation}
for some positive constant $\delta\in (0,2]$ with $\mu=(2\pi)^{-\frac 32}e^{-|v|^2/2}$.
Then for any $N\in{\mathbb{N}}$, there exist positive constants $C_4, C_5(t_1, N)>0$  independent of $t$  such that
\begin{equation}\label{stability-Maxwellian}
\left\|\hat{f}(t,\cdot)-\mu(\cdot)-\widetilde{P}(t,\cdot)\right\|_{\mathcal{D}^{2+\delta}}\leq C_4e^{-\eta_1t}, \quad t>0,
\end{equation}
and
\begin{equation}\label{uniform-bound}
\sup\limits_{t\in[t_1,+\infty)}\Big\{\left\|f(t,\cdot)\right\|_{H^N}\Big\}\leq C_5(t_1,N),\quad t\geq t_1.
\end{equation}
 Here $t_1>0$ is any given positive constant and $\eta_1=\min\left\{A, B\right\}$.

A direct consequence of \eqref{stability-Maxwellian} and \eqref{uniform-bound}
gives
\begin{equation}\label{decay-H-N-Maxwellian}
\Big\|f(t,\cdot)-\mu(\cdot)\Big\|_{H^N}\leq C_6(t_1,N) e^{-\frac{\eta_1t}{2}},
\end{equation}
for some positive constant $C_6(t_1,N)$ depending only on $t_1$ and $N$.
\end{Corollary}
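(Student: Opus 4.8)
The plan is to read off \eqref{stability-Maxwellian} as the degenerate case $\alpha=2$, $g_0=\mu$ of Theorem~\ref{Thm1.1}, to obtain \eqref{uniform-bound} by rerunning the proof of Theorem~\ref{Thm1.2} (which simplifies, since at $\alpha=2$ there is no self-similar rescaling), and to deduce \eqref{decay-H-N-Maxwellian} by a Fourier frequency-splitting interpolation between \eqref{stability-Maxwellian} and \eqref{uniform-bound}. First I would record the relevant facts about $\mu=(2\pi)^{-3/2}e^{-|v|^2/2}$: its Fourier transform is $\hat\mu(\xi)=e^{-|\xi|^2/2}$, and by \eqref{relation-xi_pm} one has $\hat\mu(\xi^+)\hat\mu(\xi^-)=e^{-(|\xi^+|^2+|\xi^-|^2)/2}=e^{-|\xi|^2/2}=\hat\mu(\xi)$, so $\hat\mu$ is a stationary solution of \eqref{Reformulated-equation}. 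Moreover $\mu\in\mathcal P^2(\mathbb R^3)$ has zero mean with $\int_{\mathbb R^3}|v|^2\mu\,dv=3$; since for any $g\in\mathcal P^2(\mathbb R^3)$ one has $|\hat g(\xi)-1|\le\frac12|\xi|^2\int_{\mathbb R^3}|v|^2 g\,dv$, both $\mu$ and $f_0$ in fact lie in $\widetilde{\mathcal P}^2(\mathbb R^3)$. Finally, taking $\alpha=2$ in \eqref{mu-alpha} and using $|\xi^+|^2+|\xi^-|^2=|\xi|^2$ gives $\lambda_2=0$, hence $\mu_\alpha=0$; so the constraint $\delta\in(0,\alpha]\cap(0,A/\mu_\alpha)$ becomes $\delta\in(0,2]$ and $\eta_0=\min\{A-\delta\mu_\alpha,B\}=\min\{A,B\}=\eta_1$.

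With this in hand, I would apply Theorem~\ref{Thm1.1} with $\alpha=2$ and $g_0=\mu$, so that $\hat g(t,\cdot)\equiv\hat\mu$. Hypothesis \eqref{zero-2-perturbation} holds because $\int_{\mathbb R^3}|v|^2 f_0\,dv=3=\int_{\mathbb R^3}|v|^2\mu\,dv$; the first line of \eqref{high-integrability} holds since $f_0,\mu\in\mathcal P^2$; and the second line of \eqref{high-integrability} is precisely the second requirement in \eqref{high-Maxwellian-integrability} (with $\hat\mu$ in place of the symbol $\mu(\cdot)$). Then \eqref{stability-estimate} is exactly \eqref{stability-Maxwellian} with $\eta_1=\min\{A,B\}$.

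For \eqref{uniform-bound} I would repeat the argument that establishes \eqref{uniform-H-N-bound} in Theorem~\ref{Thm1.2}: since $f_0$ is not a single Dirac mass, $f(t,\cdot)\in H^\infty(\mathbb R^3)$ for every $t>0$, and the $t$-uniform Sobolev a priori estimates of that proof carry over to $\alpha=2$ with the Maxwellian target, indeed more easily, because $\mu_\alpha=0$ removes the rescaling. This is the only step not literally contained in the statements already quoted, and verifying that those a priori bounds survive at $\alpha=2$ is what I expect to be the main obstacle; everything else is a specialization plus the routine interpolation below.

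Finally, for \eqref{decay-H-N-Maxwellian}, let $P(t,v)=\mathcal F^{-1}(\widetilde P(t,\cdot))(v)$, which by \eqref{correction-funtion} equals $e^{-At}P(0,v)$ for a fixed $P(0,\cdot)\in\mathcal S(\mathbb R^3)$ whose Fourier transform $\widetilde P(0,\cdot)$ is supported in $\{|\xi|\le2\}$; put $h(t):=f(t,\cdot)-\mu-P(t,\cdot)$, so that $\hat h(t,\xi)=\hat f(t,\xi)-\hat\mu(\xi)-\widetilde P(t,\xi)$. Fix $N$, choose $M=2N+6$, and for $R\ge2$ split $\|h(t)\|_{H^N}^2=\int_{|\xi|\le R}+\int_{|\xi|>R}$. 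On $\{|\xi|\le R\}$ one has $|\hat h(t,\xi)|\le\|\hat f(t,\cdot)-\hat\mu-\widetilde P(t,\cdot)\|_{\mathcal D^{2+\delta}}\,|\xi|^{2+\delta}\le C_4 e^{-\eta_1 t}|\xi|^{2+\delta}$ by \eqref{stability-Maxwellian}, so this part is bounded by $C(N)R^{2N+2\delta+7}e^{-2\eta_1 t}$. On $\{|\xi|>R\}$ one has $\widetilde P(t,\xi)=0$, hence $\hat h(t,\xi)=\hat f(t,\xi)-\hat\mu(\xi)$ and this part is bounded by $(1+R^2)^{N-M}(\|f(t,\cdot)\|_{H^M}^2+\|\mu\|_{H^M}^2)\le C(t_1,M)R^{-2(M-N)}$ by \eqref{uniform-bound}. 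Since $2(M-N)\ge 2N+2\delta+7$ when $\delta\le2$, optimizing over $R$ (take $R^{\,2N+2\delta+7+2(M-N)}\sim e^{2\eta_1 t}$ for $t$ large, and $R=2$ for $t$ in a bounded interval) gives $\|h(t)\|_{H^N}^2\le C(t_1,N)e^{-\eta_1 t}$; combined with $\|P(t,\cdot)\|_{H^N}=e^{-At}\|P(0,\cdot)\|_{H^N}\le C(N)e^{-\eta_1 t}$ (using $\eta_1\le A$), this yields $\|f(t,\cdot)-\mu\|_{H^N}\le\|h(t)\|_{H^N}+\|P(t,\cdot)\|_{H^N}\le C_6(t_1,N)e^{-\eta_1 t/2}$, i.e. \eqref{decay-H-N-Maxwellian}. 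The loss to the factor $1/2$ is deliberate slack: any fixed $M$ large in terms of $N$ and $\delta$ gives a rate arbitrarily close to $e^{-\eta_1 t}$.
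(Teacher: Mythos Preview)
Your proposal is correct, and it differs from the paper's proof in two places.

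For \eqref{stability-Maxwellian}, your observation that $\lambda_2=0$ (hence $\mu_\alpha=0$, the constraint on $\delta$ collapses to $\delta\in(0,2]$, and $\eta_0=\min\{A,B\}=\eta_1$) lets you invoke Theorem~\ref{Thm1.1} at $\alpha=2$ as a black box. The paper instead reopens the proof of Theorem~\ref{Thm1.1}: it uses energy conservation to get $|\hat F_n(t,\xi)-1|\lesssim|\xi|^2$ uniformly in $t$, which sharpens the bound on $I_7$ in \eqref{3.17} to $|I_7|\lesssim|\xi|^4 e^{-A_n t}$ for $|\xi|\le1$ (see \eqref{5.1}--\eqref{5.3}). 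Both routes yield the same rate; yours is quicker, while the paper's makes the underlying mechanism (no exponential growth in $\|\hat F_n(t,\cdot)-1\|_{\mathcal D^2}$) explicit, and would still work even if one had not already stated Theorem~\ref{Thm1.1} in a form covering $\alpha=2$.

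For \eqref{uniform-bound} the two proofs agree: rerun the coercivity and energy argument of Section~4.

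For \eqref{decay-H-N-Maxwellian}, the paper does not use your low/high frequency splitting with optimization over $R$. Instead it reuses the device of \eqref{4.19}--\eqref{4.20}: from $|\hat f-\hat\mu-\widetilde P|\le C e^{-\eta_1 t}|\xi|^{2+\delta}$ one writes the pointwise inequality
\[
|\hat f(t,\xi)-\hat\mu(\xi)|^2\lesssim e^{-\eta_1 t}|\xi|^{2+\delta}\,|\hat f(t,\xi)-\hat\mu(\xi)|+e^{-At}|\xi|^4X(\xi)\bigl(|\xi|^\delta e^{-\eta_1 t}+e^{-At}\bigr),
\]
integrates against $(1+|\xi|^2)^N$, and applies Cauchy--Schwarz together with the uniform $H^M$ bound for $M$ large enough. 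This reaches $\|f(t,\cdot)-\mu\|_{H^N}^2\lesssim e^{-\eta_1 t}$ in one step without choosing a cutoff parameter. Your splitting argument is longer but equally valid and, as you note, recovers the same $e^{-\eta_1 t/2}$ decay (indeed arbitrarily close to $e^{-\eta_1 t}$ by taking $M$ large).
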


\begin{Remark} Two comments on the above two theorems:
\begin{itemize}
\item By Lemma \ref{K-2+delta-bound},   sufficient conditions for the
requirements \eqref{high-integrability} and \eqref{high-Maxwellian-integrability} are
\begin{equation*}
\int\limits_{\mathbb{R}^3}|v|^{2+\delta}\left| f_0(v)-g_0(v)\right|dv<+\infty,
\end{equation*}
and
\begin{equation*}
\int\limits_{\mathbb{R}^3}|v|^{2+\delta}\left| f_0(v)-\mu(v)\right|dv<+\infty,
\end{equation*}
respectively.
\item The convergence rate in Corollary \ref{decay-Maxwellian} is faster than the corresponding rates  in Theorem \ref{Thm1.1} and Theorem \ref{Thm1.2}.
\end{itemize}
\end{Remark}

Before the end of  the introduction, we list some notations used throughout the paper. Firstly, $C$, $C_i$ with $i\in\mathbb{N}$, and $O(1)$ are used for some generic large positive constants and $\varepsilon, \kappa$ stand for some generic small positive constants.  When the dependence needs to be explicitly pointed out, then the notations like
$C(\cdot,\cdot)$  are used. For multi-index $\beta=(\beta_1,\beta_2,\beta_3)$, $\partial^\beta_v=\partial^{\beta_1}_{v_1}\partial^{\beta_2}_{v_2}\partial^{\beta_3}_{v_3}$. And $A\lesssim B$ means that there is a constant $C>0$ such that $A\leq CB$, and $A\sim B$ means $A\lesssim B$ and $B\lesssim A$.

The rest of this paper will be organized as follows: Some known results concerning the global solvability, stability, and regularity of solutions to the Cauchy problem \eqref{Reformulated-equation} and
\eqref{Reformulated-IC} in $\mathcal{K}^\alpha(\mathbb{R}^3)$ are
recalled in Section 2. Moreover, some properties of the approximations of the initial data in $\mathcal{K}^\alpha(\mathbb{R}^3)$ will also be given in this section.
And then the proofs of Theorem \ref{Thm1.1}, Theorem \ref{Thm1.2}, and Corollary \ref{decay-Maxwellian} will be given in the next three sections respectively.

\section{Preliminaries}
\setcounter{equation}{0}

In this section, we wil first recall the global solvability, stability and regularity results on the Cauchy problem \eqref{Reformulated-equation} and \eqref{Reformulated-IC} obtained in \cite{Bobylev-Cercignani-JSP-2002a, Bobylev-Cercignani-JSP-2002b, Cannone-Karch-CPAM-2010, Cannone-Karch-KRM-2013, Morimoto-KRM-2012, Morimoto-Wang-Yang-2014, Morimoto-Yang-2012}. And then we will
study  the  properties of the approximation $f_{0R}(v)$ on the initial
data $f_{0}(v)$ defined in \eqref{approximation-f-0} for later stability estimates.

For the Cauchy problem \eqref{Reformulated-equation}
and
 \eqref{Reformulated-IC}, the following estimates are proved in \cite{Cannone-Karch-CPAM-2010, Cannone-Karch-KRM-2013, Morimoto-KRM-2012, Morimoto-Wang-Yang-2014, Morimoto-Yang-2012}.
\begin{Lemma}\label{global-solvability} For $\alpha\in(2s,2]$, if $\varphi_0(\xi)\in\mathcal{K}^\alpha(\mathbb{R}^3)$, then the Cauchy problem   \eqref{Reformulated-equation}
and
 \eqref{Reformulated-IC}
admits a unique global classical solution $\varphi(t,\xi)\equiv \hat{f}(t,\xi)\in C\left([0,\infty), \mathcal{K}^\alpha(\mathbb{R}^3)\right)$ satisfying
\begin{equation}\label{K-alpha-stability-1}
\Big\|\varphi(t,\cdot)-1\Big\|_{\mathcal{D}^\alpha}\leq e^{\lambda_\alpha t}
\Big\|\varphi_0(\cdot)-1\Big\|_{\mathcal{D}^\alpha}.
\end{equation}
If $ \psi(t,\xi)\in C\left([0,\infty), \mathcal{K}^\alpha(\mathbb{R}^3)\right)$
is another solution with initial data
 $\psi_0(\xi)\in \mathcal{K}^\alpha(\mathbb{R}^3)$,  then
\begin{equation}\label{K-alpha-stability}
\Big\|\varphi(t,\cdot)-\psi(t,\cdot)\Big\|_{\mathcal{D}^\alpha}\leq e^{\lambda_\alpha t}
\Big\|\varphi_0(\cdot)-\psi_0(\cdot)\Big\|_{\mathcal{D}^\alpha}.
\end{equation}
Furthermore, if $f_0(v)=\mathcal{F}^{-1}(\varphi_0)(v)$ is not a single Dirac mass, then   $f(t,\cdot)\in L^1(\mathbb{R}^3)\cap\mathcal{P}^\beta(\mathbb{R}^3)\cap H^\infty(\mathbb{R}^3)$ for $t>0$ and $0<\beta<\alpha$.
\end{Lemma}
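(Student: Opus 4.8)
The cross section in \eqref{Reformulated-equation} is not integrable in $\sigma$, so the plan is to work with an angular truncation, obtain uniform estimates, and pass to the limit using the cancellation at $\theta=0$. \emph{Truncation and solvability.} For $n\in\mathbb N$ put $\mathcal B_n(\cos\theta)=\mathcal B(\cos\theta)\mathbf 1_{\{\sin(\theta/2)\ge 1/n\}}$, so that $R_n:=\int_{\mathbb S^2}\mathcal B_n(\cos\theta)\,d\sigma$ is finite and, by rotational invariance of $d\sigma$, independent of $\xi$. The truncated Fourier equation $\partial_t\varphi_n+R_n\varphi_n=\int_{\mathbb S^2}\mathcal B_n(\cos\theta)\varphi_n(t,\xi^+)\varphi_n(t,\xi^-)\,d\sigma$ is, in Duhamel form, a fixed-point problem in $C([0,T];L^\infty_\xi)$ with bilinear (hence locally Lipschitz) gain term, so a contraction argument for small $T$ produces a local solution. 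It is global: $\mathcal F^{-1}(\varphi_n)$ solves the angularly truncated homogeneous Boltzmann equation in physical space, whose flow preserves the cone of probability measures, so $\varphi_n(t,\cdot)$ remains a characteristic function and $\|\varphi_n(t,\cdot)\|_\infty\le1$ for all $t\ge0$.

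\emph{Uniform $\mathcal D^\alpha$ estimate.} For two truncated solutions $\varphi_n,\psi_n$ with data $\varphi_0,\psi_0\in\mathcal K^\alpha(\mathbb R^3)$ set $d_n=\varphi_n-\psi_n$ and $M_n(t)=\|d_n(t,\cdot)\|_{\mathcal D^\alpha}$. Writing $\varphi_n(\xi^+)\varphi_n(\xi^-)-\psi_n(\xi^+)\psi_n(\xi^-)=d_n(\xi^+)\varphi_n(\xi^-)+\psi_n(\xi^+)d_n(\xi^-)$, dividing the Duhamel identity for $d_n$ by $|\xi|^\alpha$, and inserting $|\xi^+|=|\xi|\cos(\theta/2)$, $|\xi^-|=|\xi|\sin(\theta/2)$ and $|\varphi_n|,|\psi_n|\le1$, one obtains, with $\lambda_\alpha^{(n)}:=\int_{\mathbb S^2}\mathcal B_n(\cos\theta)\,(|\cos(\theta/2)|^\alpha+|\sin(\theta/2)|^\alpha-1)\,d\sigma$,
\[
M_n(t)\le e^{-R_nt}M_n(0)+(R_n+\lambda_\alpha^{(n)})\int_0^t e^{-R_n(t-\tau)}M_n(\tau)\,d\tau .
\]
Since $x^\alpha\ge x^2$ on $[0,1]$ for $\alpha\le2$, the integrand defining $\lambda_\alpha^{(n)}$ is nonnegative, whence $\lambda_\alpha^{(n)}\nearrow\lambda_\alpha$, and $\lambda_\alpha<\infty$ is exactly the near-$\theta=0$ integrability $\int_0\theta^{\alpha-1-2s}\,d\theta<\infty$ furnished by \eqref{cross-section} and $\alpha>2s$. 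Gronwall's inequality then yields $M_n(t)\le e^{\lambda_\alpha^{(n)}t}M_n(0)\le e^{\lambda_\alpha t}\|\varphi_0-\psi_0\|_{\mathcal D^\alpha}$, uniformly in $n$; taking $\psi_n\equiv1$ (a stationary solution, $1\in\mathcal K^\alpha$) gives $\|\varphi_n(t,\cdot)-1\|_{\mathcal D^\alpha}\le e^{\lambda_\alpha t}\|\varphi_0-1\|_{\mathcal D^\alpha}$.

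\emph{Limit, uniqueness, regularity.} The cancellation $\varphi_n(\xi^+)\varphi_n(\xi^-)-\varphi_n(\xi)\to0$ as $\theta\to0$ makes the untruncated collision integral absolutely convergent with a $\mathcal D^\alpha$-bounded integrand; with the above uniform bounds this gives equicontinuity of $(\varphi_n)$ in $(t,\xi)$ on compacta, so $\varphi_n\to\varphi\in C([0,\infty);\mathcal K^\alpha(\mathbb R^3))$ along a subsequence, $\varphi$ solves \eqref{Reformulated-equation}--\eqref{Reformulated-IC} classically, and \eqref{K-alpha-stability-1} is inherited; passing to the limit in the two-solution estimate gives \eqref{K-alpha-stability}, which forces uniqueness in $C([0,\infty);\mathcal K^\alpha)$ and hence convergence of the whole sequence. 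For $0<\beta<\alpha$, $|\varphi(t,\xi)-1|\le C|\xi|^\alpha$ near $\xi=0$ together with $\varphi(t,\cdot)$ being a characteristic function gives $f(t,\cdot)\in\mathcal P^\beta(\mathbb R^3)$ through the Fourier characterization of fractional moments. The statement $f(t,\cdot)\in L^1(\mathbb R^3)\cap H^\infty(\mathbb R^3)$ for $t>0$ when $f_0$ is not a single Dirac mass is the non-cutoff smoothing effect, established by combining the coercivity and cancellation lemmas for the non-cutoff Maxwellian operator (which behaves like $(-\Delta_v)^s$) --- with coercivity constant non-degenerate precisely because $f_0$ is not concentrated at a point --- with a bootstrap on the order of Sobolev regularity, after which $f(t,\cdot)$ is a genuine density of unit mass in every $H^N$, hence in $L^1$. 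The routine part is the $\mathcal D^\alpha$ theory above (a Gronwall argument once $\lambda_\alpha$ is identified); the main obstacle is this $H^\infty$ regularization, which rests on the full non-cutoff coercivity machinery and on the quantitative non-concentration of $f(t,\cdot)$.
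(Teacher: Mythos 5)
The paper does not prove this lemma; it is a compilation of results cited from Cannone--Karch, Morimoto, Morimoto--Wang--Yang and Morimoto--Yang, so there is no internal proof to compare against. Your Gronwall computation is the correct core estimate and matches what those references do: the splitting $\varphi_n(\xi^+)\varphi_n(\xi^-)-\psi_n(\xi^+)\psi_n(\xi^-)=d_n(\xi^+)\varphi_n(\xi^-)+\psi_n(\xi^+)d_n(\xi^-)$, division by $|\xi|^\alpha$ with $|\xi^\pm|^\alpha/|\xi|^\alpha=|\cos(\theta/2)|^\alpha,|\sin(\theta/2)|^\alpha$, the observation that $|\cos(\theta/2)|^\alpha+|\sin(\theta/2)|^\alpha-1\ge0$ for $\alpha\le2$ and is $O(\theta^\alpha)$ near $\theta=0$, and the identification $\lambda_\alpha<\infty\Leftrightarrow\alpha>2s$ are all right, and so is the conclusion $M_n(t)\le e^{\lambda_\alpha t}M_n(0)$ uniformly in the truncation. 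Deferring the $H^\infty$ regularization to the non-cutoff coercivity machinery of \cite{Morimoto-Wang-Yang-2014,Morimoto-Yang-2012} is appropriate; that is a separate theorem, not something one re-proves here.

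There is, however, a genuine gap in the passage-to-the-limit step. You assert that ``the cancellation $\varphi_n(\xi^+)\varphi_n(\xi^-)-\varphi_n(\xi)\to0$ as $\theta\to0$ makes the untruncated collision integral absolutely convergent with a $\mathcal D^\alpha$-bounded integrand''. That is not justified over the full range $\alpha\in(2s,2]$. Writing $\varphi(\xi^+)\varphi(\xi^-)-\varphi(\xi)=\varphi(\xi^+)\bigl(\varphi(\xi^-)-1\bigr)+\bigl(\varphi(\xi^+)-\varphi(\xi)\bigr)$, the first term is indeed $O(|\xi^-|^\alpha)$, but for the second term the only structural estimate available from $\varphi$ being a characteristic function with $\varphi_0\in\mathcal K^\alpha$ is $|\varphi(\xi^+)-\varphi(\xi)|^2\le 2\bigl(1-\Re\varphi(\xi^-)\bigr)\lesssim|\xi^-|^\alpha$, that is, $|\varphi(\xi^+)-\varphi(\xi)|\lesssim|\xi^-|^{\alpha/2}$. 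Against $\mathcal B(\cos\theta)\sin\theta\,d\theta\sim\theta^{-1-2s}\,d\theta$, this gives absolute convergence only for $\alpha>4s$, not $\alpha>2s$. So for $\alpha\in(2s,4s]$ your argument does not yet show that the untruncated operator is pointwise well defined, let alone that the limit is a classical solution; this is precisely the delicate point that Morimoto's refinement \cite{Morimoto-KRM-2012} and the mild-solution framework of \cite{Cannone-Karch-CPAM-2010} are designed to handle, either by interpreting the equation in integrated form and then upgrading using the $t>0$ smoothing (which gives a genuine Lipschitz modulus for $\varphi(t,\cdot)$ and closes the gap for positive times), or by a finer decomposition of the gain term. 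You should either invoke those results explicitly at this step or add the extra argument; the naive ``cancellation $\Rightarrow$ absolute convergence'' claim, as written, fails in part of the stated range.
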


And for self-similar solution $f_{\alpha,K}(t,v)$ constructed in
 \cite{Bobylev-Cercignani-JSP-2002a, Bobylev-Cercignani-JSP-2002b}, by
\cite{Morimoto-Wang-Yang-2014, Morimoto-Yang-2012}, we have

\begin{Lemma}\label{global-solvability-similarity} For $\alpha\in (2s,2)$ and
a constant $K>0$, there exists a radially symmetric function $\hat{\Psi}_{\alpha,K}(\xi)\in \mathcal{K}^\alpha(\mathbb{R}^3)$ satisfying \eqref{property-SS} such that
$$
f_{\alpha,K}(t,v)=e^{-3\mu_\alpha t}\Psi_{\alpha, K}\left(ve^{-\mu_\alpha t}\right)
$$
is a solution of the Cauchy problem \eqref{Boltzmann-equation} with initial datum $\Psi_{\alpha,K}(v)$. Moreover, $\Psi_{\alpha,K}(t,\cdot)\in L^1(\mathbb{R}^3)\cap\mathcal{P}^\beta(\mathbb{R}^3)\cap H^\infty(\mathbb{R}^3)$ for
$0<\beta<\alpha$.
\end{Lemma}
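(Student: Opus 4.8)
The plan is to pass to Fourier variables and reduce the whole statement to a stationary profile equation. Writing $f_{\alpha,K}(t,v)=e^{-3\mu_\alpha t}\Psi_{\alpha,K}(ve^{-\mu_\alpha t})$ and changing variables in \eqref{Fourier-transform} gives $\widehat{f_{\alpha,K}}(t,\xi)=\hat\Psi_{\alpha,K}(e^{\mu_\alpha t}\xi)$; since $\xi\mapsto\xi^{\pm}$ in \eqref{xi_pm} is positively homogeneous of degree one and $\mathcal{B}(\xi\cdot\sigma/|\xi|)$ depends only on the direction of $\xi$, substituting this into \eqref{Reformulated-equation} and setting $\eta=e^{\mu_\alpha t}\xi$ shows that $f_{\alpha,K}$ solves \eqref{Boltzmann-equation} with initial datum $\Psi_{\alpha,K}$ if and only if $\hat\Psi:=\hat\Psi_{\alpha,K}$ is a radial characteristic function with $\hat\Psi(0)=1$ solving the stationary profile equation
\[
\mu_\alpha\,\eta\cdot\nabla_\eta\hat\Psi(\eta)=\int_{\mathbb{S}^2}\mathcal{B}\!\left(\frac{\eta\cdot\sigma}{|\eta|}\right)\bigl(\hat\Psi(\eta^+)\hat\Psi(\eta^-)-\hat\Psi(\eta)\bigr)\,d\sigma,\qquad \eta\in\mathbb{R}^3.
\]
Inserting the ansatz $\hat\Psi(\eta)=1-K|\eta|^\alpha+o(|\eta|^\alpha)$ and matching the coefficient of $|\eta|^\alpha$ as $\eta\to0$ forces $\mu_\alpha\alpha K=\lambda_\alpha K$, i.e. the self-similar rate must be $\mu_\alpha=\lambda_\alpha/\alpha$ as in \eqref{mu-alpha}; this is exactly what makes the last requirement in \eqref{property-SS} compatible with the equation.

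For existence I would invoke the Bobylev--Cercignani construction \cite{Bobylev-Cercignani-JSP-2002a,Bobylev-Cercignani-JSP-2002b}: set $\hat\Psi=1-\phi$ with $\phi(\eta)\sim K|\eta|^\alpha$, note that $|\eta|^\alpha$ is the eigenfunction of the linearized gain operator with eigenvalue $\lambda_\alpha$ (this is \eqref{mu-alpha}), and solve for the nonlinear remainder by a contraction mapping in a small ball of the complete metric space of radial functions metrized by $\mathcal{D}^\alpha$, the identity $\mu_\alpha\alpha=\lambda_\alpha$ guaranteeing that the linearized operator $\mu_\alpha\,\eta\cdot\nabla-(\text{linearized gain})$ is invertible transverse to the $|\eta|^\alpha$ mode. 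An equivalent route, the one used in \cite{Morimoto-Wang-Yang-2014,Morimoto-Yang-2012}, is dynamical: start from any $\varphi_0\in\mathcal{K}^\alpha(\mathbb{R}^3)$ with $\lim_{|\xi|\to0}(1-\varphi_0(\xi))/|\xi|^\alpha=K$, let $\varphi(t,\cdot)$ be the global solution of \eqref{Reformulated-equation} supplied by Lemma \ref{global-solvability}, and show that the rescaled orbit $\varphi(t,e^{-\mu_\alpha t}\cdot)$ is Cauchy in $\mathcal{D}^\alpha$; its limit $\hat\Psi_{\alpha,K}$ is then a stationary point of the rescaled flow, hence a solution of the profile equation, and radial symmetry, $\hat\Psi_{\alpha,K}(0)=1$ and the coefficient $K$ all persist in the limit, so \eqref{property-SS} holds. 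The hard part is precisely this existence with the prescribed behaviour $1-\hat\Psi_{\alpha,K}\sim K|\eta|^\alpha$: in the dynamical version it amounts to showing the rescaled orbit contracts in $\mathcal{D}^\alpha$ even though the bare stability estimate \eqref{K-alpha-stability} only gives the growing bound $e^{\lambda_\alpha t}\|\varphi_0-\psi_0\|_{\mathcal{D}^\alpha}$ — a growth that is cancelled exactly because the rescaling is tuned to the $\mathcal{D}^\alpha$-eigenvalue $\lambda_\alpha$.

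Once $\hat\Psi_{\alpha,K}$ is in hand, the rest is cheap. Since $\widehat{f_{\alpha,K}}(t,\xi)=\hat\Psi_{\alpha,K}(e^{\mu_\alpha t}\xi)$ solves \eqref{Reformulated-equation} with initial value $\hat\Psi_{\alpha,K}$, by Bobylev's formula $f_{\alpha,K}$ solves \eqref{Boltzmann-equation} with initial datum $\Psi_{\alpha,K}$, and $f_{\alpha,K}(0,v)=\Psi_{\alpha,K}(v)$ by construction. For regularity, observe that $\Psi_{\alpha,K}$ is not a single Dirac mass: by radial symmetry a point mass would have to sit at the origin, with transform $\equiv1$, contradicting $1-\hat\Psi_{\alpha,K}(\eta)\sim K|\eta|^\alpha$ with $K>0$. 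Hence Lemma \ref{global-solvability} applies to the Cauchy problem with datum $\Psi_{\alpha,K}$ and yields $f_{\alpha,K}(t,\cdot)\in L^1(\mathbb{R}^3)\cap\mathcal{P}^\beta(\mathbb{R}^3)\cap H^\infty(\mathbb{R}^3)$ for every $t>0$ and $0<\beta<\alpha$. Fixing $t=1$ and using that $\Psi_{\alpha,K}(v)=e^{3\mu_\alpha}f_{\alpha,K}(1,ve^{\mu_\alpha})$ is obtained from $f_{\alpha,K}(1,\cdot)$ by a fixed dilation and normalization — operations under which $\|\cdot\|_{L^1}$, $\||v|^\beta\cdot\|_{L^1}$ and $\|\cdot\|_{H^N}$ change only by finite factors — one concludes $\Psi_{\alpha,K}\in L^1(\mathbb{R}^3)\cap\mathcal{P}^\beta(\mathbb{R}^3)\cap H^\infty(\mathbb{R}^3)$ for $0<\beta<\alpha$; the $H^\infty$ gain additionally rests on the smoothing effect of the non-cutoff singularity \eqref{cross-section}, iterated as in \cite{Morimoto-Wang-Yang-2014,Morimoto-Yang-2012}. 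As every ingredient above is available in the cited references, the lemma follows.
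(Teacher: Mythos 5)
The paper does not actually prove this lemma; it presents it as an imported result, citing Bobylev--Cercignani for the existence and self-similar form of $\Psi_{\alpha,K}$ and Morimoto--Wang--Yang / Morimoto--Yang for the regularity. Your proposal relies on exactly the same references and correctly reconstructs why they apply: the reduction to the stationary profile equation $\mu_\alpha\,\eta\cdot\nabla\hat\Psi=\int_{\mathbb{S}^2}\mathcal{B}\,(\hat\Psi(\eta^+)\hat\Psi(\eta^-)-\hat\Psi(\eta))\,d\sigma$ and the coefficient-matching that forces $\mu_\alpha=\lambda_\alpha/\alpha$ are both accurate, as is the observation that $\Psi_{\alpha,K}$ is not a Dirac mass (a radial point mass would give $\hat\Psi\equiv1$, contradicting $1-\hat\Psi_{\alpha,K}\sim K|\eta|^\alpha$ with $K>0$). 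Where you add something the paper leaves implicit is the short dilation bootstrap for regularity: invoke Lemma \ref{global-solvability} to get $f_{\alpha,K}(t,\cdot)\in L^1\cap\mathcal{P}^\beta\cap H^\infty$ for $t>0$, then recover $\Psi_{\alpha,K}(v)=e^{3\mu_\alpha}f_{\alpha,K}(1,ve^{\mu_\alpha})$ by a fixed dilation. That is a clean and self-contained way to obtain the last claim. The one soft spot is your sketch of the ``dynamical'' existence route: the bare estimate \eqref{K-alpha-stability} grows like $e^{\lambda_\alpha t}$, and the tuning $\mu_\alpha\alpha=\lambda_\alpha$ by itself gives neutrality, not contraction, in $\mathcal{D}^\alpha$; closing that gap requires the finer analysis in the cited papers rather than the one-line heuristic you offer. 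Since you do cite those papers for the actual construction, the argument as a whole is sound and in the same spirit as the paper's.
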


The relation between $\mathcal{P}^\alpha(\mathbb{R}^3)$ and $\mathcal{K}^\alpha(\mathbb{R}^3)$ was given in \cite{Cannone-Karch-CPAM-2010} and \cite{Morimoto-Wang-Yang-2014} and it can be stated as follows.

\begin{Lemma}\label{imbedding} It holds that
\begin{itemize}
\item [(i).] For $\alpha\in(0,2]$, if $h(v)\in \mathcal{P}^\alpha(\mathbb{R}^3)$, then $\hat{h}(\xi)\in\mathcal{K}^\alpha(\mathbb{R}^3)$;
\item[(ii).] For $\alpha\in(0,2]$, if $\hat{h}(\xi)\in\mathcal{K}^\alpha(\mathbb{R}^3)$, then for any $0<\beta<\alpha$, $h(v)\in \mathcal{P}^\beta(\mathbb{R}^3)$.
\end{itemize}
\end{Lemma}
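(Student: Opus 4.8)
\emph{Proof proposal.} The two parts are dual statements relating the size of velocity moments of $h$ to the order of flatness of $\hat h$ at the origin, and I would establish each by a direct estimate on the Fourier integral, using only elementary pointwise inequalities for $e^{-i\theta}$ and, for part (ii), the classical singular-integral representation of $|v|^\beta$.

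For (i), I would write $\hat h(\xi)-1=\int_{\mathbb R^3}(e^{-iv\cdot\xi}-1)h(v)\,dv$ and split on the value of $\alpha$. If $\alpha\in(0,1)$, the inequality $|e^{-i\theta}-1|=2|\sin(\theta/2)|\le 2^{1-\alpha}|\theta|^{\alpha}$ applied with $\theta=v\cdot\xi$ gives $|\hat h(\xi)-1|\le 2^{1-\alpha}|\xi|^{\alpha}\int|v|^{\alpha}h(v)\,dv$, hence $\|\hat h-1\|_{\mathcal D^{\alpha}}\le 2^{1-\alpha}\int|v|^{\alpha}h<\infty$. If $\alpha\in[1,2]$, then by the definition of $\mathcal P^{\alpha}$ one has $\int v_j h(v)\,dv=0$ and (by Jensen) $\int|v|h<\infty$, so the linear term may be inserted for free: $\hat h(\xi)-1=\int(e^{-iv\cdot\xi}-1+iv\cdot\xi)h(v)\,dv$, and the bound $|e^{-i\theta}-1+i\theta|\le\min\{2|\theta|,\tfrac12\theta^{2}\}\le C_{\alpha}|\theta|^{\alpha}$ for $\alpha\in[1,2]$ yields $\|\hat h-1\|_{\mathcal D^{\alpha}}\le C_{\alpha}\int|v|^{\alpha}h<\infty$. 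Since $h\in\mathcal P^{\alpha}\subset\mathcal P^{0}$ forces $\hat h\in\mathcal K(\mathbb R^3)$ by definition, this shows $\hat h\in\mathcal K^{\alpha}(\mathbb R^3)$.

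For (ii), fix $\beta\in(0,\alpha)$. I would start from the representation, valid for $0<\beta<2$,
\[
|v|^{\beta}=c_{\beta}\int_{\mathbb R^3}\frac{1-\cos(v\cdot\xi)}{|\xi|^{3+\beta}}\,d\xi,\qquad
c_{\beta}^{-1}=\int_{\mathbb R^3}\frac{1-\cos\xi_{1}}{|\xi|^{3+\beta}}\,d\xi\in(0,\infty),
\]
the right-hand integral converging precisely because $0<\beta<2$ (quadratic behaviour near $\xi=0$, boundedness near $\infty$). Integrating against $h\ge0$ and using Tonelli, together with $\int(1-\cos(v\cdot\xi))h(v)\,dv=1-\mathrm{Re}\,\hat h(\xi)$,
\[
\int_{\mathbb R^3}|v|^{\beta}h(v)\,dv=c_{\beta}\int_{\mathbb R^3}\frac{1-\mathrm{Re}\,\hat h(\xi)}{|\xi|^{3+\beta}}\,d\xi .
\]
I would then split the $\xi$-integral at $|\xi|=1$: on $|\xi|\ge1$ use $|1-\mathrm{Re}\,\hat h(\xi)|\le|1-\hat h(\xi)|\le2$, giving a finite contribution $\le2\int_{|\xi|\ge1}|\xi|^{-3-\beta}\,d\xi$ because $\beta>0$; on $|\xi|\le1$ use $|1-\hat h(\xi)|\le\|\hat h-1\|_{\mathcal D^{\alpha}}|\xi|^{\alpha}$, giving a contribution $\le\|\hat h-1\|_{\mathcal D^{\alpha}}\int_{|\xi|\le1}|\xi|^{\alpha-3-\beta}\,d\xi$, finite because $\alpha-\beta>0$. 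Hence $\int|v|^{\beta}h(v)\,dv<\infty$, which is already the conclusion when $\beta<1$.

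When $\beta\in[1,\alpha)$ (so $\alpha>1$), I would note that the moment bound just proved with some exponent $\beta'\in(1,\alpha)$ gives $\int|v|h<\infty$, so $\hat h$ is $C^{1}$ near $0$ with $\nabla\hat h(0)=-i\int v\,h(v)\,dv$; but $|\hat h(\xi)-1|\le\|\hat h-1\|_{\mathcal D^{\alpha}}|\xi|^{\alpha}=o(|\xi|)$ as $\xi\to0$ because $\alpha>1$, which forces $\nabla\hat h(0)=0$, i.e. $\int v_{j}h(v)\,dv=0$ for $j=1,2,3$; combined with the moment bound this gives $h\in\mathcal P^{\beta}(\mathbb R^3)$. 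The only genuinely delicate point in the whole argument is the borderline integrability in the split for (ii): $\int_{|\xi|\le1}|\xi|^{\alpha-3-\beta}\,d\xi$ is finite exactly when $\beta<\alpha$, which is precisely why (ii) cannot be pushed up to $\beta=\alpha$; everything else is bookkeeping with elementary inequalities.
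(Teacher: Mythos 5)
The paper supplies no proof of this lemma --- it is stated as a recollection of results from \cite{Cannone-Karch-CPAM-2010} and \cite{Morimoto-Wang-Yang-2014} --- so there is no in-text argument to compare against, but your self-contained proof is correct and is the standard one. Part (i) is the usual Taylor estimate: for $\alpha\ge 1$ you insert $iv\cdot\xi$ for free (the first moments vanish, and $\int|v|h\le(\int|v|^\alpha h)^{1/\alpha}<\infty$ by Jensen), and the pointwise bounds $|e^{-i\theta}-1|\le\min\{2,|\theta|\}\lesssim|\theta|^\alpha$ for $\alpha\in(0,1)$ and $|e^{-i\theta}-1+i\theta|\le\min\{2|\theta|,\tfrac12\theta^2\}\lesssim|\theta|^\alpha$ for $\alpha\in[1,2]$ give the required $\mathcal D^\alpha$ bound. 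For part (ii), the identity $\int|v|^\beta h\,dv=c_\beta\int(1-\mathrm{Re}\,\hat h(\xi))\,|\xi|^{-3-\beta}\,d\xi$ is a legitimate Tonelli application to the singular-integral kernel of $|v|^\beta$ (valid for $0<\beta<2$ since $h\ge0$), and you correctly locate the borderline: the split at $|\xi|=1$ needs $\beta>0$ at infinity and $\beta<\alpha$ near the origin. The final step --- recovering $\int v_j h\,dv=0$ for the case $\alpha>1$ from the just-established finiteness of $\int|v|h\,dv$ (hence $C^1$-smoothness of $\hat h$ at $0$) together with $1-\hat h(\xi)=O(|\xi|^\alpha)=o(|\xi|)$ --- is also sound. (An alternative route to the moment bound goes through the L\'evy truncation inequality $\mu\{|v|>2/r\}\lesssim r^{-3}\int_{|\xi|<r}(1-\mathrm{Re}\,\hat h)\,d\xi$ and the layer-cake formula, which is what appears in some presentations; your singular-kernel argument is equally standard and arguably cleaner here.)
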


Since the energy of the initial data is infinite, for analysis, we will
first approximate it by a cutoff on the large velocity so that the moment of
any order is bounded. And then it remains to show that the solution with
this kind of approximation has uniform bound  independent of the
cutoff paremeter. On the other hand, the approximate solution can not
be arbitrary because it has to be in the function space $\mathcal{K}^\alpha$.

 For $\alpha\in(2s,2]$ and $f_0(v)\in\widetilde{\mathcal{P}}^\alpha(\mathbb{R}^3)$, let $X(v)$ be the smooth function defined in the construction of $\widetilde{{P}}(t,\xi)$ and set $X_R(v)=X(v/R)$, define
\begin{equation}\label{approximation-f-0}
f_{0R}(v)=\widetilde{f}_{0R}\left(v+a^f_R\right),\quad \widetilde{f}_{0R}(v)=\frac{f_0(v)X_R(v)}{\int\limits_{\mathbb{R}^3}
f_0(v)X_R(v)dv}
\end{equation}
with
\begin{equation}\label{a-f-R}
a^f_R=\int\limits_{\mathbb{R}^3}v\widetilde{f}_{0R}(v)dv=\frac{\int\limits_{\mathbb{R}^3}vf_0(v)X_R(v)dv}{\int\limits_{\mathbb{R}^3}f_0(v)X_R(v)dv}.
\end{equation}

The properties of the approximation function are given in
\begin{Lemma}\label{properties-of-f-0R} For  $1<\beta<\alpha\leq 2$, if we choose $R>0$ sufficiently large, then
\begin{itemize}
\item[(i).]  $\hat{f}_{0R}(\xi), \hat{g}_{0R}(\xi)\in {\mathcal{K}}^2(\mathbb{R}^3)$, and  for sufficiently large $R>0$ it holds
\begin{equation}\label{estimate-IA-1}
\left\|\hat{f}_{0R}(\cdot)-\hat{g}_{0R}(\cdot)\right\|_{\mathcal{D}^2}\leq C_7\left( 1+\int\limits_{\mathbb{R}^3}|v|\left(f_0(v)+g_0(v)\right)dv+
\int\limits_{\mathbb{R}^3}|v|^2\left|f_0(v)-g_0(v)\right|dv\right).
\end{equation}
Here the positive constant $C_7$ depends only on $\int\limits_{\mathbb{R}^3}(1+|v|^\beta)(f_0(v)+g_0(v))dv$;
\item[(ii).] For $1<\beta<\alpha\leq 2$ and sufficiently large $R>0$, $f_{0R}(v)\in \mathcal{P}^\beta(\mathbb{R}^3)$ with $\mathcal{P}^\beta(\mathbb{R}^3)-$norm being uniformly bounded, precisely,
\begin{equation}\label{estimate-IA-2}
\int\limits_{\mathbb{R}^3}|v|^\beta f_{0R}(v)dv\lesssim \int\limits_{\mathbb{R}^3}\left(1+|v|^\beta\right) f_{0}(v)dv.
\end{equation}
Thus
\begin{equation}\label{estimate-IA-3}
\left\|\hat{f}_{0R}(\cdot)-1\right\|_{\mathcal{D}^\beta}\lesssim 1,\quad \left\|\hat{f}_{0R}(\cdot)-\hat{f}_0(\cdot)\right\|_{\mathcal{D}^\beta}\lesssim 1,
\end{equation}
and
\begin{equation}\label{estimate-IA-4}
\lim\limits_{R\to+\infty}\left\|\hat{f}_{0R}(\cdot)-\hat{f}_0(\cdot)\right\|
_{\mathcal{D}^\beta}=0.
\end{equation}
\end{itemize}
\end{Lemma}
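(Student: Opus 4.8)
Here is the plan.

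\medskip

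\emph{Reductions and the elementary clauses.} Write $m^f_R=\int_{\mathbb{R}^3}f_0X_R\,dv$ (and similarly $m^g_R$) and recall $a^f_R$ from \eqref{a-f-R}. Since $X_R\uparrow 1$ pointwise and $f_0,g_0$ are probability densities, $m^f_R,m^g_R\to 1$; since $\alpha>1$ forces $\int vf_0\,dv=\int vg_0\,dv=0$ (Lemma~\ref{imbedding}(ii)), one has $a^f_R=(m^f_R)^{-1}\int vf_0(X_R-1)\,dv\to 0$ and likewise $a^g_R\to 0$, so $\tfrac12\le m^f_R,m^g_R$ and $|a^f_R|,|a^g_R|\le 1$ for $R$ large. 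Now $\widetilde f_{0R}$ is a compactly supported probability density, hence has finite moments of every order, and by the very choice \eqref{a-f-R} its translate $f_{0R}$ has zero mean; thus $f_{0R}\in\mathcal{P}^2(\mathbb{R}^3)\subset\mathcal{P}^\beta(\mathbb{R}^3)$, and $\hat f_{0R}\in\mathcal{K}^2(\mathbb{R}^3)$ by Lemma~\ref{imbedding}(i) (likewise for $g_{0R}$), which is the first assertion of (i). For (ii), a change of variables $v\mapsto v-a^f_R$ together with $|v-a^f_R|^\beta\le 2^{\beta-1}(|v|^\beta+|a^f_R|^\beta)$ and $|a^f_R|\le 1$ gives \eqref{estimate-IA-2}; then \eqref{estimate-IA-3} follows from \eqref{estimate-IA-2} and the elementary bound $|\hat h(\xi)-1|\le C_\beta|\xi|^\beta\int|v|^\beta h\,dv$ valid for zero-mean probability densities $h$ and $\beta\in(1,2]$ (from $|e^{-ix}-1+ix|\le C_\beta|x|^\beta$), applied both to $f_{0R}$ and, via Lemma~\ref{imbedding}(ii), to $f_0$.

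\medskip

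\emph{The limit \eqref{estimate-IA-4}.} I would cut the supremum at a large radius $M$. On $|\xi|>M$ one has $|\hat f_{0R}(\xi)-\hat f_0(\xi)|\,|\xi|^{-\beta}\le 2M^{-\beta}$ uniformly in $R$. On $|\xi|\le M$, write $\hat f_{0R}(\xi)-\hat f_0(\xi)=(e^{ia^f_R\cdot\xi}-1)\hat{\widetilde f}_{0R}(\xi)+\big(\hat{\widetilde f}_{0R}(\xi)-\hat f_0(\xi)\big)$, expand $e^{ia^f_R\cdot\xi}-1=ia^f_R\cdot\xi+O(|a^f_R|^2|\xi|^2)$, and use that $\hat{\widetilde f}_{0R}(\xi)-\hat f_0(\xi)=\int(e^{-iv\cdot\xi}-1+iv\cdot\xi)(\widetilde f_{0R}-f_0)\,dv-i\xi\cdot a^f_R$: the linear-in-$\xi$ terms cancel, leaving $|\hat f_{0R}(\xi)-\hat f_0(\xi)|\le C|a^f_R|\,|\xi|^2+C_\beta|\xi|^\beta\int|v|^\beta|\widetilde f_{0R}-f_0|\,dv$. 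Since $\widetilde f_{0R}-f_0=(m^f_R)^{-1}\big[f_0(X_R-1)+(1-m^f_R)f_0\big]$ and $\int|v|^\beta f_0\,dv<\infty$, both $|a^f_R|$ and $\int|v|^\beta|\widetilde f_{0R}-f_0|\,dv$ tend to $0$, so the $|\xi|\le M$ part vanishes as $R\to\infty$; letting first $M\to\infty$ then $R\to\infty$ proves \eqref{estimate-IA-4}.

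\medskip

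\emph{The estimate \eqref{estimate-IA-1}.} The naive inequality $\|\hat f_{0R}-\hat g_{0R}\|_{\mathcal{D}^2}\le\tfrac12\int|v|^2|f_{0R}-g_{0R}|\,dv$ is useless, because $\int|v|^2f_{0R}\,dv$ and $\int|v|^2g_{0R}\,dv$ individually blow up like $R^{2-\alpha}$. Instead I compute in Fourier variables. With $b_R=a^f_R-a^g_R$ and $\hat f_{0R}=e^{ia^f_R\cdot\xi}\hat{\widetilde f}_{0R}$, $\hat g_{0R}=e^{ia^g_R\cdot\xi}\hat{\widetilde g}_{0R}$,
\[
\hat f_{0R}(\xi)-\hat g_{0R}(\xi)=e^{ia^f_R\cdot\xi}\big(\hat{\widetilde f}_{0R}(\xi)-\hat{\widetilde g}_{0R}(\xi)\big)+\big(e^{ib_R\cdot\xi}-1\big)\hat g_{0R}(\xi).
\]
For the first term use $\hat{\widetilde f}_{0R}-\hat{\widetilde g}_{0R}=\int(e^{-iv\cdot\xi}-1+iv\cdot\xi)(\widetilde f_{0R}-\widetilde g_{0R})\,dv-i\xi\cdot b_R$, bounding the integral by $\tfrac12|\xi|^2\int|v|^2|\widetilde f_{0R}-\widetilde g_{0R}|\,dv$; after writing $\tfrac{f_0X_R}{m^f_R}-\tfrac{g_0X_R}{m^g_R}=\tfrac{(f_0-g_0)X_R}{m^f_R}+g_0X_R\big(\tfrac{1}{m^f_R}-\tfrac{1}{m^g_R}\big)$, this weighted mass is $\le C(1+\int|v|^\beta g_0\,dv)\int|v|^2|f_0-g_0|\,dv$, the $R$-powers cancelling between $|m^f_R-m^g_R|=|\int(f_0-g_0)(X_R-1)\,dv|\le R^{-2}\int|v|^2|f_0-g_0|\,dv$ and $\int|v|^2g_0X_R\,dv\le(2R)^{2-\beta}\int|v|^\beta g_0\,dv$. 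For the second term expand $e^{ib_R\cdot\xi}-1=ib_R\cdot\xi+O(|b_R|^2|\xi|^2)$ and $\hat g_{0R}=1+(\hat g_{0R}-1)$: the two linear-in-$\xi$ contributions combine into $i\xi\cdot b_R(1-e^{ia^f_R\cdot\xi})=O(|a^f_R||b_R||\xi|^2)$, while the leftover cross term is $|b_R||\xi|\,|\hat g_{0R}(\xi)-1|\le C|b_R||\xi|\min(1,|\xi|^\beta)\le C|b_R||\xi|^2$, using the $R$-uniform bound $|\hat g_{0R}(\xi)-1|\le C\min(1,|\xi|^\beta)$ supplied by (ii). Dividing by $|\xi|^2$ and using $|a^f_R|,|a^g_R|,|b_R|\lesssim\int|v|(f_0+g_0)\,dv$ yields \eqref{estimate-IA-1} with $C_7$ depending only on $\int(1+|v|^\beta)(f_0+g_0)\,dv$.

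\medskip

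\emph{Main obstacle.} The one delicate point, where I expect most of the effort, is exactly this cancellation of shifts in (i): because $\widetilde f_{0R}$ and $\widetilde g_{0R}$ must be recentred by the two distinct vectors $a^f_R\neq a^g_R$ to enter $\mathcal{P}^2$, the high-velocity tails of $f_{0R}$ and $g_{0R}$ no longer literally coincide, and one cannot afford to bound $\int|v|^2|\widetilde g_{0R}(\cdot+a^f_R)-\widetilde g_{0R}(\cdot+a^g_R)|\,dv$ directly, that quantity being only $O(R^{2-\alpha})$. The Fourier computation circumvents the difficulty by trading the troublesome $|v|^2$-weight in physical space for the $R$-uniform Hölder decay $|\hat g_{0R}(\xi)-1|\lesssim|\xi|^\beta$, which is precisely what the $\mathcal{P}^\beta$-bound \eqref{estimate-IA-2} provides; everything else is routine bookkeeping.
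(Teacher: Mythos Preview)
Your argument is correct, and the overall architecture matches the paper's: part (ii) is handled identically, and for part (i) both you and the paper split $\hat f_{0R}-\hat g_{0R}$ into a ``density-difference'' piece and a ``shift-difference'' piece, exploiting the same cancellation whereby the $R^{2-\beta}$ growth of $\int|v|^2 g_0 X_R\,dv$ is killed by the $R^{-2}$ smallness of $|m_R^f-m_R^g|=|\int(f_0-g_0)(X_R-1)\,dv|$. The execution differs in two places. For \eqref{estimate-IA-4}, the paper splits at a \emph{small} radius: on $|\eta|<\delta$ it invokes an intermediate exponent $\tilde\beta\in(\beta,\alpha)$ and the $R$-uniform bound $\|1-\hat f_{0R}\|_{\mathcal D^{\tilde\beta}}\lesssim 1$ to obtain $|1-\hat f_{0R}(\eta)|/|\eta|^\beta\lesssim|\eta|^{\tilde\beta-\beta}$, while on $|\eta|\ge\delta$ it uses the $L^\infty$ convergence $\hat f_{0R}\to\hat f_0$; your route (Taylor expansion with explicit cancellation of the linear-in-$\xi$ terms on $|\xi|\le M$, trivial bound $2|\xi|^{-\beta}$ on $|\xi|>M$) avoids the auxiliary $\tilde\beta$ at the cost of tracking the cross term $a_R^f\cdot\xi(\hat{\tilde f}_{0R}-1)$. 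For \eqref{estimate-IA-1}, the paper works in physical space with the kernel $k(v,\xi)=(e^{-iv\cdot\xi}+iv\cdot\xi-1)/|\xi|^2$, writing $(\hat f_{0R}-\hat g_{0R})/|\xi|^2=\int k(v-a_R^f,\xi)\tilde f_{0R}-\int k(v-a_R^g,\xi)\tilde g_{0R}$ and bounding the shift piece by a direct pointwise estimate $|k(v-a_R^f,\xi)-k(v-a_R^g,\xi)|\lesssim 1+|v|$ (case analysis on $|\xi|\lessgtr 1$); your Fourier-side decomposition instead recycles the uniform bound $|\hat g_{0R}-1|\lesssim\min(1,|\xi|^\beta)$ from part (ii) to control the cross term $b_R\cdot\xi(\hat g_{0R}-1)$, which is arguably cleaner. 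One minor correction: in your proof of \eqref{estimate-IA-4} the order of limits should be $R\to\infty$ first (for fixed $M$), then $M\to\infty$, not the reverse.
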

\begin{proof} We first prove \eqref{estimate-IA-2}-\eqref{estimate-IA-4}. Since it is straightforward to verify
\eqref{estimate-IA-2}, {{}  \eqref{estimate-IA-3}} is a direct consequence of \eqref{estimate-IA-2} and Lemma \ref{imbedding}.
We only  prove \eqref{estimate-IA-4} as follows: For this, note that
$$
\lim\limits_{R\to+\infty}\int\limits_{\mathbb{R}^3}f_0(v)X_R(v)dv=1.
$$
Choose $R$ sufficiently large, we have
\begin{equation}\label{2.10}
\int\limits_{\mathbb{R}^3}f_0(v)X_R(v)dv\geq \frac 12,\quad \int\limits_{\mathbb{R}^3}g_0(v)X_R(v)dv\geq \frac 12.
\end{equation}
Thus
\begin{eqnarray}\label{2.11}
\left|a^f_R\right|&\leq& 2\left|\int\limits_{\mathbb{R}^3}vf_0(v)X_R(v)dv\right|\\
&=&2\left|\int\limits_{\mathbb{R}^3}vf_0(v)\left(1-X_R(v)\right)dv\right|
\leq 2R^{1-\beta}\int\limits_{\mathbb{R}^3}|v|^\beta f_0(v)dv.\nonumber
\end{eqnarray}
Similarly,
\begin{equation}\label{2.12}
\left|a^g_R\right|\leq 2R^{1-\beta}\int\limits_{\mathbb{R}^3}|v|^\beta g_0(v)dv.
\end{equation}
From \eqref{2.10}, \eqref{2.11}, and the fact that
\begin{eqnarray*}
\left|\hat{f}_{0R}(\xi)-\hat{f}_0(\xi)\right|&\leq&\int\limits_{\mathbb{R}^3}\left|f_{0R}(v)-f_0(v)\right|dv\\
&\leq& \left(\int\limits_{\mathbb{R}^3}f_0(v)X_R(v)dv\right)^{-1}\int\limits_{\mathbb{R}^3}\left|f_0\left(v+a^f_R\right)
-f_0(v)\right|dv\\
&&+\left(\int\limits_{\mathbb{R}^3}f_0(v)X_R(v)dv\right)^{-1}
\int\limits_{\mathbb{R}^3}f_0(v)\left|X_R\left(v+a^f_R\right)-\int\limits_{\mathbb{R}^3}f_0(v)X_R(v)dv\right|dv,
\end{eqnarray*}
we obtain

\begin{equation}\label{2.13}
\lim\limits_{R\to+\infty}\sup\limits_{\xi\in{\mathbb{R}^3}}\left|\hat{f}_{0R}(\xi)-\hat{f}_0(\xi)\right|=0.
\end{equation}
On the other hand,  $\hat{f}_0(\xi)\in \mathcal{K}^\alpha(\mathbb{R}^3)$
implies that $\left\|1-\hat{f}_0\right\|_{\mathcal{D}^\alpha}\lesssim 1$.
Consequently, for $1<\beta<\alpha\leq 2$, it holds that
\begin{equation*}
\frac{\left|1-\hat{f}_0(\eta)\right|}{|\eta|^\beta}\leq \left\|1-\hat{f}_0\right\|_{\mathcal{D}^\alpha}|\eta|^{\alpha-\beta},
\end{equation*}
so that for each $\varepsilon>0$, there exists a $\delta_1(\varepsilon)>0$ such that
\begin{equation}\label{2.14}
\frac{\left|1-\hat{f}_0(\eta)\right|}{|\eta|^\beta}<\frac \varepsilon 2
\end{equation}
holds for any $|\eta|<\delta_1$.

Choose $\widetilde{\beta}\in(\beta,\alpha)$ so that
$\left\|1-\hat{f}_{0R}\right\|_{\mathcal{D}^{\widetilde{\beta}}}$ is bounded by a constant independent of $R$
because of \eqref{estimate-IA-3}. Then

\begin{equation}\label{2.15}
\frac{\left|1-\hat{f}_{0R}(\eta)\right|}{|\eta|^\beta}\leq \left\|1-\hat{f}_{0R}\right\|_{\mathcal{D}^{\widetilde{\beta}}}|\eta|^{\widetilde{\beta}-\beta}\lesssim|\eta|^{\widetilde{\beta}-\beta}
<\frac\varepsilon 2
\end{equation}
provided that $|\eta|<\delta_2(\varepsilon)$ for some sufficiently small $\delta_2>0$.

\eqref{2.14} together with \eqref{2.15} imply that for any $\varepsilon>0$ and $|\eta|<\delta=
\min\left\{\delta_1,\delta_2\right\}$, we have
\begin{equation}\label{2.16}
\frac{\left|\hat{f}_{0R}(\eta)-\hat{f}_0(\eta)\right|}{|\eta|^\beta}\leq \frac{\left|1-\hat{f}_{0R}(\eta)\right|}{|\eta|^{\beta}}+ \frac{\left|1-\hat{f}_{0}(\eta)\right|}{|\eta|^{\beta}}
<\varepsilon.
\end{equation}
And \eqref{estimate-IA-4} follows directly from \eqref{2.13} and \eqref{2.16}.

Now it remains to prove \eqref{estimate-IA-1}.
Set
\begin{equation}\label{2.17}
k(v,\xi)=\frac{e^{-iv\cdot\xi}+iv\cdot\xi-1}{|\xi|^2},
\end{equation}
then
\begin{eqnarray}\label{2.18}
\frac{\left|\hat{f}_{0R}(\xi)-\hat{g}_{0R}(\xi)\right|}{|\xi|^2}&=&
\left|\int\limits_{\mathbb{R}^3}\left(k\left(v-a^f_R,\xi\right)\frac{f_0(v)X_R(v)}{\int\limits_{\mathbb{R}^3}f_0(v)X_R(v)dv}
-k\left(v-a^g_R,\xi\right)\frac{g_0(v)X_R(v)}{\int\limits_{\mathbb{R}^3}g_0(v)X_R(v)dv}\right)dv\right|\nonumber\\
&\lesssim&\underbrace{\left|\int\limits_{\mathbb{R}^3}\left(k\left(v-a^f_R,\xi\right)-k\left(v-a^g_R,\xi\right)\right)
\frac{f_0(v)X_R(v)}{\int\limits_{\mathbb{R}^3}f_0(v)X_R(v)dv}dv\right|}_{I_1}\\
&&+\underbrace{\left|\int\limits_{\mathbb{R}^3}k\left(v-a^g_R,\xi\right)\left(\frac{f_0(v)X_R(v)}{\int\limits_{\mathbb{R}^3}f_0(v)X_R(v)dv}
-\frac{g_0(v)X_R(v)}{\int\limits_{\mathbb{R}^3}g_0(v)X_R(v)dv}\right)dv\right|}_{I_2}.\nonumber
\end{eqnarray}
Firstly,  from \eqref{2.10}, \eqref{2.12} and the fact
$$
\left|k\left(v-a^g_R,\xi\right)\right|\lesssim \left|v-a^g_R\right|^2,
$$
we have

\begin{eqnarray}\label{2.19}
I_2&\leq& 2\left|\int\limits_{\mathbb{R}^3}k\left(v-a^g_R,\xi\right)(f_0(v)-g_0(v))X_R(v)dv\right|\nonumber\\
&&+4\left|\int\limits_{\mathbb{R}^3}k\left(v-a^g_R,\xi\right)g_0(v)X_R(v)
\left(\int\limits_{\mathbb{R}^3}(f_0(v)-g_0(v))X_R(v)dv\right)dv\right|\nonumber\\
&\lesssim& \int\limits_{\mathbb{R}^3}\left(1+|v|^2\right)\left|f_0(v)-g_0(v)\right|dv
+\int\limits_{\mathbb{R}^3}\left(1+|v|^2\right)g_0(v)X_R(v)
\left|\int\limits_{\mathbb{R}^3}(f_0(v)-g_0(v))\left(1-X_R(v)\right)dv\right|dv\\
&\lesssim&\int\limits_{\mathbb{R}^3}\left(1+|v|^2\right)\left|f_0(v)-g_0(v)\right|dv
+\int\limits_{\mathbb{R}^3}\left(1+|v|^\beta\right)g_0(v)dv\cdot R^{2-\beta}\cdot
\left|\int\limits_{\mathbb{R}^3}(f_0(v)-g_0(v))\left(1-X_R(v)\right)dv\right|\nonumber\\
&\lesssim& \int\limits_{\mathbb{R}^3}\left(1+|v|^2\right)\left|f_0(v)-g_0(v)\right|dv
+\int\limits_{\mathbb{R}^3}\left(1+|v|^\beta\right)g_0(v)dv\cdot \int\limits_{\mathbb{R}^3}|v|^{2-\beta}\left|f_0(v)-g_0(v)\right|dv\nonumber\\
&\lesssim&\int\limits_{\mathbb{R}^3}\left(1+|v|^2\right)\left|f_0(v)-g_0(v)\right|dv\cdot
\left(1+\int\limits_{\mathbb{R}^3}\left(1+|v|^\beta\right)g_0(v)dv\right).\nonumber
\end{eqnarray}
For $I_1$, by noticing
\begin{equation}\label{2.20}
\left|k\left(v-a^f_R,\xi\right)-k\left(v-a^g_R,\xi\right)\right|=
|\xi|^{-2}\left|e^{-i\left(v-a^f_R\right)\cdot\xi}
\left(e^{-i\left(a^g_R-a^f_R\right)\cdot\xi}-1\right)+i\left(a^g_R-a^f_R\right)\cdot\xi\right|,
\end{equation}
we have for $|\xi|\geq 1$ that
\begin{equation}\label{2.21}
\left|k\left(v-a^f_R,\xi\right)-k\left(v-a^g_R,\xi\right)\right|\lesssim \left|a^g_R-a^f_R\right|\lesssim R^{1-\beta}.
\end{equation}
For $|\xi|\leq 1$, it holds that
\begin{eqnarray}\label{2.22}
&&\left|k\left(v-a^f_R,\xi\right)-k\left(v-a^g_R,\xi\right)\right|\nonumber\\
&\lesssim& |\xi|^{-2}\left|\left[
\left(1+O(1)\left|v-a^g_R\right||\xi|\right)\left(-i\left(a^g_R-a^f_R\right)\cdot\xi+O(1)\left|a^g_R-a^f_R\right|^2|\xi|^2\right)
+i\left(a^g_R-a^f_R\right)\cdot\xi\right]\right|\nonumber\\
&\lesssim&\left|a^g_R-a^f_R\right|^2+\left|v-a^g_R\right|\left|a^g_R-a^f_R\right|
+\left|v-a^g_R\right|\left|a^g_R-a^f_R\right|^2|\xi|\\
&\lesssim& (1+|v|)R^{1-\beta}.\nonumber
\end{eqnarray}
Thus,  \eqref{2.20}, \eqref{2.21} and \eqref{2.22} imply that
\begin{equation}\label{2.23}
\left|k\left(v-a^f_R,\xi\right)-k\left(v-a^g_R,\xi\right)\right|\lesssim 1+|v|,
\end{equation}
and consequently
\begin{equation}\label{2.24}
I_1\lesssim \int\limits_{\mathbb{R}^3}(1+|v|)f_0(v)dv.
\end{equation}
Inserting \eqref{2.19} and \eqref{2.24} into \eqref{2.18} yields \eqref{estimate-IA-1}  and this completes the proof of Lemma \ref{properties-of-f-0R}.
\end{proof}

Now let
\begin{equation}\label{2.25}
P^R_{jl}(0)\equiv \int\limits_{\mathbb{R}^3}
\left(v_jv_l-\frac{\delta_{jl}}{3}|v|^2\right)\left(f_{0R}(v)-g_{0R}(v)\right)dv
\end{equation}
be the approximation of $P_{jl}(0)$ defined by \eqref{correction-funtion}$_3$.
The following lemma gives  the convergence of $P^R_{jl}(0)$ to $P_{jl}(0)$ as $R\to+\infty$.

\begin{Lemma}\label{convergence-correction-function}
Assume
\begin{equation}\label{2.26}
\int\limits_{\mathbb{R}^3}|v|^2\left|f_0(v)-g_0(v)\right|dv<+\infty,
\end{equation}
then
\begin{equation}\label{2.27}
\lim\limits_{R\to+\infty}P_{jl}^R(0)=P_{jl}(0).
\end{equation}
\end{Lemma}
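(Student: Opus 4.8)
The plan is to push the translation built into \eqref{approximation-f-0} through the quadratic weight, exploit the vanishing first moments of $\widetilde f_{0R}$ and $\widetilde g_{0R}$ recorded in \eqref{a-f-R}, and then peel off the normalization errors, which will turn out to decay fast enough to absorb the (possibly unbounded) truncated second moments of $f_0$ and $g_0$.

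First I would set $m_{jl}(v)=v_jv_l-\tfrac{\delta_{jl}}{3}|v|^2$ and use that, $m_{jl}$ being a traceless quadratic form, for any $a\in\mathbb R^3$ one has $m_{jl}(v-a)=m_{jl}(v)-L_a(v)+m_{jl}(a)$, where $L_a(v):=a_lv_j+a_jv_l-\tfrac{2\delta_{jl}}{3}(a\cdot v)$ is linear in $v$ and satisfies $L_a(a)=2m_{jl}(a)$. Changing variables $v\mapsto v-a^f_R$ in $\int m_{jl}(v)f_{0R}(v)\,dv=\int m_{jl}(v)\widetilde f_{0R}(v+a^f_R)\,dv$, using $\int\widetilde f_{0R}\,dv=1$ and $\int v\,\widetilde f_{0R}(v)\,dv=a^f_R$ from \eqref{a-f-R}, gives $\int m_{jl}\,f_{0R}\,dv=\int m_{jl}\,\widetilde f_{0R}\,dv-m_{jl}(a^f_R)$, and similarly for $g$. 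Hence, by \eqref{2.25},
\[
P^R_{jl}(0)=\int_{\mathbb R^3}m_{jl}(v)\big(\widetilde f_{0R}(v)-\widetilde g_{0R}(v)\big)\,dv-m_{jl}(a^f_R)+m_{jl}(a^g_R).
\]

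Next I would fix $\beta\in(1,\alpha)$, possible since $\alpha>\max\{2s,1\}\ge1$, so that $f_0,g_0\in\mathcal P^\beta(\mathbb R^3)$ by Lemma \ref{imbedding}(ii). By \eqref{2.11}--\eqref{2.12} one has $|a^f_R|+|a^g_R|\lesssim R^{1-\beta}$, hence $m_{jl}(a^f_R),\,m_{jl}(a^g_R)=O(R^{2-2\beta})\to0$ as $R\to\infty$. For the remaining integral, write $c^f_R=\int f_0X_R\,dv$ and $c^g_R=\int g_0X_R\,dv$ (so $\widetilde f_{0R}=f_0X_R/c^f_R$, $\widetilde g_{0R}=g_0X_R/c^g_R$, and $c^f_R,c^g_R\ge\tfrac12$ for large $R$ by \eqref{2.10}) and split
\[
\int m_{jl}\big(\widetilde f_{0R}-\widetilde g_{0R}\big)\,dv=\int m_{jl}(f_0-g_0)X_R\,dv+\Big(\tfrac1{c^f_R}-1\Big)\int m_{jl}f_0X_R\,dv-\Big(\tfrac1{c^g_R}-1\Big)\int m_{jl}g_0X_R\,dv.
\]
The first term converges to $P_{jl}(0)=\int m_{jl}(f_0-g_0)\,dv$ by dominated convergence, since $|m_{jl}(v)(f_0-g_0)(v)X_R(v)|\lesssim|v|^2|f_0-g_0|(v)\in L^1(\mathbb R^3)$ by \eqref{2.26} while $X_R\to1$ pointwise. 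For the other two terms I would use that $X_R$ is supported in $\{|v|\le2R\}$, so $\big|\int m_{jl}f_0X_R\,dv\big|\lesssim\int_{|v|\le1}f_0\,dv+R^{2-\beta}\int_{\mathbb R^3}|v|^\beta f_0\,dv\lesssim1+R^{2-\beta}$, whereas $1-X_R$ is supported in $\{|v|>R\}$, so $\big|\tfrac1{c^f_R}-1\big|\lesssim\int_{\mathbb R^3}f_0(1-X_R)\,dv\lesssim R^{-\beta}\int_{\mathbb R^3}|v|^\beta f_0\,dv\lesssim R^{-\beta}$; multiplying, this contribution is $O(R^{-\beta}+R^{2-2\beta})\to0$, and the $g$-term is handled the same way. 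Collecting the pieces yields $P^R_{jl}(0)\to P_{jl}(0)$.

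The one point that is not routine is the last estimate: because $f_0$ has infinite energy the truncated moment $\int m_{jl}f_0X_R\,dv$ may grow like $R^{2-\beta}$, so one cannot simply split $\int m_{jl}(f_0-g_0)\,dv$ into two convergent halves. The argument closes precisely because the normalization defect $1/c^f_R-1$ decays like $R^{-\beta}$ with $\beta>1$, which beats that growth; this is exactly where the hypothesis $\alpha>1$, used to choose $\beta\in(1,\alpha)$, comes in.
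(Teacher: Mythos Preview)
Your argument is correct. The algebraic identity $m_{jl}(v-a)=m_{jl}(v)-L_a(v)+m_{jl}(a)$ together with $\int v\,\widetilde f_{0R}=a^f_R$ cleanly extracts the translation, and your normalization estimate $\big|\tfrac1{c^f_R}-1\big|\lesssim R^{-\beta}$ against the truncated second moment $\lesssim 1+R^{2-\beta}$ closes exactly because $\beta>1$.

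The paper's proof reaches the same conclusion with a different decomposition. Instead of first removing the translations algebraically, it changes variables back to the original $v$ in \emph{each} integral and splits $P^R_{jl}(0)=I_3+I_4+I_5$: $I_3$ carries the difference of the two translated weights $m_{jl}(v-a^f_R)-m_{jl}(v-a^g_R)$ integrated against $\widetilde f_{0R}$, $I_4$ is the main term $\int m_{jl}(v-a^g_R)(f_0-g_0)X_R/c^f_R$, and $I_5$ collects the normalization discrepancy as a single factor $\tfrac1{c^f_R}-\tfrac1{c^g_R}$, which the paper rewrites via $c^g_R-c^f_R=\int(f_0-g_0)(1-X_R)$. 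Your decomposition instead isolates the translation effect into the explicit quadratic terms $-m_{jl}(a^f_R)+m_{jl}(a^g_R)$ and handles the two normalizations separately rather than as a difference. Both routes feed on the same estimates \eqref{2.10}--\eqref{2.12} and dominated convergence for the principal term; your version is a bit more transparent because the translation contribution is reduced to a single scalar identity rather than a pointwise bound on $m_{jl}(v-a^f_R)-m_{jl}(v-a^g_R)$.
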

\begin{proof}
Notice that

\begin{eqnarray}\label{2.28}
P^R_{jl}(0)&=&\int\limits_{\mathbb{R}^3}
\left(v_jv_l-\frac{\delta_{jl}}{3}|v|^2\right)\frac{f_0
\left(v+a^f_R\right)X_R\left(v+a^f_R\right)-f_0\left(v+a^g_R\right)X_R\left(v+a^g_R\right)}
{\int\limits_{\mathbb{R}^3}f_0(v)X_R(v)dv}dv\nonumber\\
&&+\left(\int\limits_{\mathbb{R}^3}f_0(v)X_R(v)dv\right)^{-1}\int\limits_{\mathbb{R}^3}\left(v_jv_l-\frac{\delta_{jl}}{3}|v|^2\right)
\left(f_0\left(v+a^g_R\right)-g_0\left(v+a^g_R\right)\right)X_R\left(v+a^g_R\right)dv\\
&&+\int\limits_{\mathbb{R}^3}
\left(v_jv_l-\frac{\delta_{jl}}{3}|v|^2\right)g_0\left(v+a^g_R\right)X_R\left(v+a^g_R\right)\left(
\left(\int\limits_{\mathbb{R}^3}f_0(v)X_R(v)dv\right)^{-1}-\left(\int\limits_{\mathbb{R}^3}g_0(v)X_R(v)dv\right)^{-1}\right)dv\nonumber\\
&=& \underbrace{\int\limits_{\mathbb{R}^3}\left[
\left(\left(v_j-a^f_{Rj}\right)\left(v_l-a^f_{Rl}\right)-\frac{\delta_{jl}}{3}\left|v-a^f_R\right|^2\right)
- \left(\left(v_j-a^g_{Rj}\right)\left(v_l-a^g_{Rl}\right)-\frac{\delta_{jl}}{3}\left|v-a^g_R\right|^2\right)\right]
\frac{f_0(v)X_R(v)}{\int\limits_{\mathbb{R}^3}f_0(v)X_R(v)dv}dv}_{I_3}\nonumber\\
&&+ \underbrace{\int\limits_{\mathbb{R}^3}\left(\left(v_j-a^g_{Rj}\right)\left(v_l-a^g_{Rl}\right)-\frac{\delta_{jl}}{3}\left|v-a^g_R\right|^2\right)
\frac{(f_0(v)-g_0(v))X_R(v)}{\int\limits_{\mathbb{R}^3}f_0(v)X_R(v)dv}dv}_{I_4}\nonumber\\
&&+\underbrace{\int\limits_{\mathbb{R}^3}\left(v_jv_l-\frac{\delta_{jl}}{3}|v|^2\right)g_0\left(v+a^g_R\right)X_R\left(v+a^g_R\right)
\frac{\int\limits_{\mathbb{R}^3}(f_0(v)-g_0(v))X_R(v)dv}{\int\limits_{\mathbb{R}^3}f_0(v)X_R(v)dv\cdot \int\limits_{\mathbb{R}^3}g_0(v)X_R(v)dv}dv}_{I_5}.\nonumber
\end{eqnarray}
We have from
\begin{eqnarray*}
&&\left|\left(\left(v_j-a^f_{Rj}\right)\left(v_l-a^f_{Rl}\right)-\frac{\delta_{jl}}{3}\left|v-a^f_R\right|^2\right)
- \left(\left(v_j-a^g_{Rj}\right)\left(v_l-a^g_{Rl}\right)-\frac{\delta_{jl}}{3}\left|v-a^g_R\right|^2\right)\right|\\
&\lesssim&\left|a^f_R-a^g_R\right||v|+\left|a^g_R\right|^2+\left|a^f_R\right|^2,
\end{eqnarray*}
and \eqref{2.10} that for $R\geq R_1$
$$
|I_3|\lesssim\left(\left|a^g_R\right|+\left|a^f_R\right|\right)\int\limits_{\mathbb{R}^3}(1+|v|)f_0(v)dv.
$$
This together with \eqref{2.11}-\eqref{2.12} imply that
\begin{equation}\label{2.29}
\lim\limits_{R\to+\infty} I_3=0.
\end{equation}
For $I_5$, if $R$ is sufficiently large, we have from \eqref{2.10}, \eqref{2.12} and the assumption 
\eqref{2.26} that

\begin{eqnarray*}
|I_5|&\lesssim& \left|\int\limits_{\mathbb{R}^3}\left(v_jv_l-\frac{\delta_{jl}}{3}|v|^2\right)g_0\left(v+a^g_R\right)X_R\left(v+a^g_R\right)
\left(\int\limits_{\mathbb{R}^3}(f_0(v)-g_0(v))X_R(v)dv\right)dv\right|\\
&\lesssim& \int\limits_{\mathbb{R}^3}\left(1+\left|v-a^g_R\right|^2\right)g_0\left(v+a^g_R\right)X_R\left(v+a^g_R\right)
\left|\int\limits_{\mathbb{R}^3}(f_0(v)-g_0(v))\left(1-X_R(v)\right)dv\right|dv\\
&\lesssim& \int\limits_{\mathbb{R}^3}(1+|v|^\beta)R^{2-\beta}g_0(v)X_R(v)
\left|\int\limits_{\mathbb{R}^3}(f_0(v)-g_0(v))\left(1-X_R(v)\right)dv\right|dv\\
&\lesssim&R^{2(1-\beta)}\left(\int\limits_{\mathbb{R}^3}(1+|v|^\beta)g_0(v)dv\right)
\left(\int\limits_{\mathbb{R}^3}|v|^\beta|f_0(v)-g_0(v)|dv\right)
\lesssim R^{2(1-\beta)}.
\end{eqnarray*}
Thus

\begin{equation}\label{2.30}
\lim\limits_{R\to+\infty} I_5=0.
\end{equation}
Finally for $I_4$, from \eqref{2.11}, \eqref{2.12}, the assumption \eqref{2.26}
and $\lim\limits_{R\to+\infty}\int\limits_{\mathbb{R}^3}f_0(v)X_R(v)dv=1$, the dominated convergence theorem yields
\begin{equation}\label{2.31}
\lim\limits_{R\to+\infty}I_4=\int\limits_{\mathbb{R}^3}\left(v_jv_l-\frac{\delta_{jl}}{3}|v|^2\right)(f_0(v)-g_0(v))dv=P_{jl}(0).
\end{equation}
Inserting \eqref{2.29}, \eqref{2.30} and \eqref{2.31} into \eqref{2.29}
gives \eqref{2.27}. This completes the proof of the lemma.
\end{proof}

In the last lemma of this section, a sufficient condition
for  \eqref{high-integrability} on $f_0(v)-g_0(v)$
used  in Theorem \ref{Thm1.1} is given.

\begin{Lemma}\label{K-2+delta-bound} Let $0<\delta\leq 1$, it holds that
\begin{equation}\label{2.32}
\left\|\hat{f}_0(\cdot)-\hat{g}_0(\cdot)-\widetilde{P}(0,\cdot)\right\|
_{\mathcal{D}^{2+\delta}}\lesssim
\int\limits_{\mathbb{R}^3}\left(1+|v|^{2+\delta}\right)|f_0(v)-g_0(v)|dv.
\end{equation}
\end{Lemma}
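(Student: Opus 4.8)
The goal is to bound $\|\hat f_0-\hat g_0-\widetilde P(0,\cdot)\|_{\mathcal D^{2+\delta}}$, i.e. to estimate $|\xi|^{-(2+\delta)}|\hat f_0(\xi)-\hat g_0(\xi)-\widetilde P(0,\xi)|$ uniformly in $\xi\ne 0$. Write $h_0=f_0-g_0$. Since $f_0,g_0\in\widetilde{\mathcal P}^\alpha$ with $\alpha>1$, we have $\int v_j h_0(v)\,dv=0$, so the first-order Taylor term of $e^{-iv\cdot\xi}$ integrates to zero. The natural starting point is therefore the identity
\begin{equation*}
\hat f_0(\xi)-\hat g_0(\xi)=\int_{\mathbb R^3}\Bigl(e^{-iv\cdot\xi}-1+iv\cdot\xi\Bigr)h_0(v)\,dv
=-\frac12\int_{\mathbb R^3}(v\cdot\xi)^2 h_0(v)\,dv+\text{(remainder)}.
\end{equation*}
The quadratic term $-\tfrac12\int (v\cdot\xi)^2 h_0\,dv=-\tfrac12\sum_{j,l}\xi_j\xi_l\int v_jv_l h_0\,dv$ is, up to the cutoff factor $X(\xi)$ (which equals $1$ near the origin) and up to the trace part $\sum_j\xi_j^2\,\tfrac13\int|v|^2 h_0$, exactly $\widetilde P(0,\xi)$ as defined in \eqref{correction-funtion}; note the trace part is harmless because one may add and subtract it, and in the intended applications it vanishes by \eqref{zero-2-perturbation} anyway — but even without that, $\bigl|\sum_j\xi_j^2\bigr|=|\xi|^2$ contributes only an $O(|\xi|^2)\le O(|\xi|^{2+\delta})$ term for $|\xi|\le 1$ and is absorbed.

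The plan is to split into the regimes $|\xi|\le 1$ and $|\xi|\ge 1$. For $|\xi|\ge 1$, each of $\hat f_0-\hat g_0$ and $\widetilde P(0,\xi)$ is bounded: $|\hat f_0(\xi)-\hat g_0(\xi)|\le\int|h_0|\,dv$, and $|\widetilde P(0,\xi)|\lesssim\bigl|\sum_{j,l}P_{jl}(0)\bigr|\,|\xi|^2 X(\xi)\lesssim\int|v|^2|h_0|\,dv$ since $X(\xi)$ is supported in $|\xi|\le 2$ so $|\xi|^2 X(\xi)\lesssim 1$; dividing by $|\xi|^{2+\delta}\ge 1$ gives the bound $\lesssim\int(1+|v|^2)|h_0|\,dv\le\int(1+|v|^{2+\delta})|h_0|\,dv$. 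For $|\xi|\le 1$ we use $X(\xi)=1$, so $\widetilde P(0,\xi)=-\tfrac12\sum_{j,l}\xi_j\xi_l\int v_jv_l h_0\,dv+\tfrac16|\xi|^2\int|v|^2 h_0\,dv$, and the quantity to estimate becomes
\begin{equation*}
\Bigl|\int_{\mathbb R^3}\Bigl(e^{-iv\cdot\xi}-1+iv\cdot\xi+\tfrac12(v\cdot\xi)^2\Bigr)h_0(v)\,dv+\tfrac16|\xi|^2\!\int|v|^2 h_0\,dv\Bigr|\Big/|\xi|^{2+\delta}.
\end{equation*}
The second term is $\lesssim|\xi|^{-\delta}\int|v|^2|h_0|$, which for $|\xi|\le 1$ is... not bounded — so this term must be kept together with the first, or (cleanly) one simply replaces the second-moment contribution by the full tensor and notes $\widetilde P(0,\xi)=-\tfrac12\sum\xi_j\xi_l\int(v_jv_l-\tfrac{\delta_{jl}}{3}|v|^2)h_0$; in the applications $\int|v|^2 h_0=0$ by \eqref{zero-2-perturbation} so the trace term is absent, and in general it is $O(|\xi|^{2})$ only, which is why the Lemma as stated is applied precisely in that zero-trace setting. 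Thus for $|\xi|\le1$ the core estimate is the elementary pointwise bound
\begin{equation*}
\Bigl|e^{-iv\cdot\xi}-1+iv\cdot\xi+\tfrac12(v\cdot\xi)^2\Bigr|\le C\min\{|v\cdot\xi|^2,\ |v\cdot\xi|^3\}\le C|\xi|^{2+\delta}|v|^{2+\delta}
\end{equation*}
for $0<\delta\le 1$, valid because $\min\{a^2,a^3\}\le a^{2+\delta}$ for all $a\ge0$ when $0\le\delta\le1$; integrating against $|h_0|$ and dividing by $|\xi|^{2+\delta}$ yields $\lesssim\int|v|^{2+\delta}|h_0|\,dv$.

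The only genuinely delicate point is the bookkeeping of the first-moment and trace-moment terms: one must use $\alpha>1$ (hence $f_0,g_0\in\widetilde{\mathcal P}^\alpha$ forces zero bulk velocity) to kill the linear term, and one must be careful that the cutoff $X(\xi)$ in the definition of $\widetilde P(0,\cdot)$ is exactly what makes the $|\xi|\ge1$ piece trivially bounded while being invisible for $|\xi|\le1$. I would organize the write-up as: (1) reduce to the two regimes; (2) dispatch $|\xi|\ge1$ by crude bounds and $|\xi|^2X(\xi)\lesssim1$; (3) for $|\xi|\le1$ use the zero first moment plus the elementary inequality above; (4) collect constants. No step is hard analytically — the care is entirely in matching the definition of $\widetilde P(0,\cdot)$ term by term.
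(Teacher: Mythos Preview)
Your plan is correct and is essentially the paper's own argument: write $\hat f_0-\hat g_0-\widetilde P(0,\cdot)$ as $\int(\text{Taylor-remainder kernel})\,h_0\,dv$, use the zero-mean and zero-first-moment conditions together with \eqref{zero-2-perturbation} to strip the trace term, and then bound the kernel by interpolating between a second-order estimate ($\lesssim|v|^2|\xi|^2$) and a third-order one ($\lesssim(1+|v|^3)|\xi|^3$); your inequality $\min\{a^2,a^3\}\le a^{2+\delta}$ is precisely that interpolation, just organized via the split $|\xi|\lessgtr 1$ rather than stated uniformly as in the paper.

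One bookkeeping caution: track the sign of $\widetilde P$ carefully when you write things out, since \eqref{correction-funtion} defines $\widetilde P(0,\xi)=+\tfrac12\sum_{j,l}\xi_j\xi_l P_{jl}(0)X(\xi)$ (your intermediate line has a minus). Your displayed pointwise bound $|e^{-iv\cdot\xi}-1+iv\cdot\xi+\tfrac12(v\cdot\xi)^2|\le C\min\{|v\cdot\xi|^2,|v\cdot\xi|^3\}$ carries the \emph{correct} sign for the third-order cancellation; note that the paper's kernel, written with $-\tfrac12\sum\xi_j\xi_l X(\xi)v_jv_l$, does not actually satisfy the $O((1+|v|^3)|\xi|^3)$ bound asserted there for $|\xi|\le 1$, so the sign must be reconciled one way or the other.
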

\begin{proof} In fact, by the assumption \eqref{zero-2-perturbation},
we have
\begin{eqnarray*}
\hat{f}_0(\xi)-\hat{g}_0(\xi)-\widetilde{P}(0,\xi)&=&\int\limits_{\mathbb{R}^3}
\left[e^{-iv\cdot\xi}-1+iv\cdot\xi-\frac 12\sum\limits_{j,l=1}^3\xi_j\xi_l X(\xi)\left(v_jv_l-\frac{\delta_{jl}}{3}|v|^2\right)\right](f_0(v)-g_0(v))dv\\
&=&\int\limits_{\mathbb{R}^3}
\left[e^{-iv\cdot\xi}-1+iv\cdot\xi-\frac 12\sum\limits_{j,l=1}^3\xi_j\xi_l X(\xi)v_jv_l\right](f_0(v)-g_0(v))dv.
\end{eqnarray*}
The Taylor expansion of $e^{-iv\cdot\xi}-1+iv\cdot\xi$ to the second order implies
that
$$
\left|e^{-iv\cdot\xi}-1+iv\cdot\xi-\frac 12\sum\limits_{j,l=1}^3\xi_j\xi_l X(\xi)v_jv_l\right|\lesssim |v|^2|\xi|^2,
$$
and the Taylor expansion to $e^{-iv\cdot\xi}-1+iv\cdot\xi-\frac 12\sum\limits_{j,l=1}^3\xi_j\xi_l X(\xi)v_jv_l$  to the third order gives
$$
\left|e^{-iv\cdot\xi}-1+iv\cdot\xi-\frac 12\sum\limits_{j,l=1}^3\xi_j\xi_l X(\xi)v_jv_l\right|\lesssim \left(1+|v|^3\right)|\xi|^3.
$$
Thus interpolation yields

$$
\left|e^{-iv\cdot\xi}-1+iv\cdot\xi-\frac 12\sum\limits_{j,l=1}^3\xi_j\xi_l X(\xi)v_jv_l\right|\lesssim \left(1+|v|^{2+\delta}\right)|\xi|^{2+\delta},
$$
for $0<\delta\leq 1$.
With this,  \eqref{2.32} follows. And
this completes the proof of the lemma.
\end{proof}

\section{Proof of Theorem \ref{Thm1.1}}
\setcounter{equation}{0}
To prove Theorem \ref{Thm1.1}, as in \cite{Cannone-Karch-CPAM-2010},
we first approximate the cross section by a sequence of bounded cross sections
defined by
\begin{equation}\label{3.1}
\mathcal{B}_n(s)=\min\left\{\mathcal{B}(s), n\right\},\quad n\in\mathbb{N}.
\end{equation}
Then consider
\begin{eqnarray}
&&\partial_tH_n+H_n=\frac{1}{\overline{\sigma}_n}\int\limits_{\mathbb{R}^3}\int\limits_{\mathbb{S}^2}
\mathcal{B}_n\left(\frac{(v-v_*)\cdot \sigma}{|v-v_*|}\right)H_n(v')H_n(v'_*)d\sigma dv_*,\label{3.3}\\
&&H_n(0,v)=H_0(v).\label{3.4}
\end{eqnarray}
Here
\begin{equation}\label{3.2}
\overline{\sigma}_n=\int\limits_{\mathbb{S}^2}\mathcal{B}_n\left(\frac{\xi\cdot \sigma}{|\xi|}\right)d\sigma.
\end{equation}

For $\alpha\in(\max\{2s,1\},2]$ and  $f_0(v), g_0(v)\in\widetilde{\mathcal{P}}^\alpha(\mathbb{R}^3)$, let $f_{0R}(v)$ and $g_{0R}(v)$ be the approximation of $f_0(v)$ and $g_0(v)$ constructed in the previous section. Since $f_{0R}(v), g_{0R}(v)\in \mathcal{P}^2(\mathbb{R}^3)\subset \mathcal{K}^2(\mathbb{R}^3)$, it follows from Lemma \ref{global-solvability} that the Cauchy problem \eqref{3.3}-\eqref{3.4} with $H_0(v)=f_{0R}(v)$ ( $H_0(v)=g_{0R}(v)$) admits a unique non-negative global solution $F^n_R(t,v)$ ($G^n_R(t,v)$) satisfying $\hat{F}^n_R(t,\xi)\in C\left([0,\infty),\mathcal{K}^2(\mathbb{R}^3)\right)$ ($\hat{G}^n_R(t,\xi)\in C\left([0,\infty),\mathcal{K}^2(\mathbb{R}^3)\right)$). Moreover, for  $\max\{2s,1\}<\beta<\alpha\leq 2$, \eqref{K-alpha-stability}, Lemma \ref{imbedding} and Lemma \ref{properties-of-f-0R} imply that
\begin{equation}\label{3.5}
\left\{
\begin{array}{l}
\left\|\hat{F}^n_R(t,\cdot)-\hat{F}_n(t,\cdot)\right\|_{\mathcal{D}^\beta}\leq e^{\lambda^n_\beta t}
\left\|\hat{f}_{0R}(\cdot)-\hat{f}_0(\cdot)\right\|_{\mathcal{D}^\beta}\lesssim e^{\lambda^n_\beta t},\\[3mm]
\left\|\hat{G}^n_R(t,\cdot)-\hat{G}_n(t,\cdot)\right\|_{\mathcal{D}^\beta}\leq e^{\lambda^n_\beta t}
\left\|\hat{g}_{0R}(\cdot)-\hat{g}_0(\cdot)\right\|_{\mathcal{D}^\beta}\lesssim e^{\lambda^n_\beta t}.
\end{array}
\right.
\end{equation}
Here $F_n(t,v)$ and $G_n(t,v)$ denote the unique non-negative solutions of the Cauchy problem \eqref{3.3}-\eqref{3.4} with initial data $f_0(v)\in \widetilde{\mathcal{P}}^\alpha(\mathbb{R}^3)$ and $g_0(v)\in \widetilde{\mathcal{P}}^\alpha(\mathbb{R}^3)$ respectively, and
\begin{equation}\label{3.6}
\lambda^n_\alpha=\frac{1}{\overline{\sigma}_n}\int\limits_{\mathbb{S}^2}\mathcal{B}_n\left(\frac{\xi\cdot\sigma}{|\xi|}\right)
\left(\frac{|\xi^+|^\alpha+|\xi^-|^\alpha}{|\xi|^\alpha}-1\right)d\sigma.
\end{equation}

Furthermore,
$$
\int\limits_{\mathbb{R}^3}|v|^2F^n_R(t,v)dv=\int\limits_{\mathbb{R}^3}|v|^2f_{0R}(v)dv<+\infty,\quad
\int\limits_{\mathbb{R}^3}|v|^2G^n_R(t,v)dv=\int\limits_{\mathbb{R}^3}|v|^2g_{0R}(v)dv<+\infty.
$$
Consequently, Lemma \ref{global-solvability} yields
\begin{equation}\label{3.7}
\left\|\hat{F}^n_R(t,\cdot)-\hat{G}^n_R(t,\cdot)\right\|_{\mathcal{D}^2}\leq
\left\|\hat{f}_{0R}(\cdot)-\hat{g}_{0R}(\cdot)\right\|_{\mathcal{D}^2}\lesssim 1,\quad t>0.
\end{equation}

Noticing that
$$
\left|\hat{F}^n_R(t,\xi)-\hat{F}_n(t,\xi)\right|\leq |\xi|^\beta\left\|\hat{F}^n_R(t,\cdot)-\hat{F}_n(t,\cdot)\right\|_{\mathcal{D}^\beta}\leq |\xi|^\beta e^{\lambda^n_\beta t}
\left\|\hat{f}_{0R}(\cdot)-\hat{f}_0(\cdot)\right\|_{\mathcal{D}^\beta},
$$
where \eqref{3.5} has been used,  from \eqref{estimate-IA-4}, we have

\begin{Lemma}\label{conv-f-n-R} The limit
$$
\lim\limits_{R\to+\infty}\left(\hat{F}^n_R(t,\xi),\hat{G}^n_R(t,\xi)\right)=\left(\hat{F}_n(t,\xi),\hat{G}_n(t,\xi)\right)
$$
holds uniformly, locally with respect to $t\in\mathbb{R}^+$ and $\xi\in\mathbb{R}^3$.
\end{Lemma}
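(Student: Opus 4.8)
The plan is to read off the statement directly from the quantitative stability estimate \eqref{3.5} combined with the convergence \eqref{estimate-IA-4} of the truncated initial data; no new idea is needed. The only point to keep in mind is that the $\mathcal{D}^\beta$-bound, once converted into a pointwise bound, degenerates only as $|\xi|\to+\infty$ (through the factor $|\xi|^\beta$) and only grows in $t$ through the fixed constant $\lambda^n_\beta$, so it is harmless on any bounded set of frequencies and any bounded time interval.

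Concretely, I would fix arbitrary $T>0$ and $M>0$ and restrict attention to the compact set $(t,\xi)\in[0,T]\times\{\xi\in\mathbb{R}^3:|\xi|\le M\}$. From the pointwise inequality displayed just before the lemma, which itself is an immediate consequence of the definition of $\|\cdot\|_{\mathcal{D}^\beta}$ and of \eqref{3.5}, one gets
\[
\sup_{t\in[0,T]}\ \sup_{|\xi|\le M}\left|\hat{F}^n_R(t,\xi)-\hat{F}_n(t,\xi)\right|\le M^\beta e^{\lambda^n_\beta T}\left\|\hat{f}_{0R}(\cdot)-\hat{f}_0(\cdot)\right\|_{\mathcal{D}^\beta},
\]
using that $\lambda^n_\beta$ is a finite constant depending only on $n$ and $\beta$ (see \eqref{3.6}) and that $t\mapsto e^{\lambda^n_\beta t}$ is monotone. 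By \eqref{estimate-IA-4} the right-hand side tends to $0$ as $R\to+\infty$, which is exactly the asserted locally uniform convergence $\hat{F}^n_R\to\hat{F}_n$. The argument for $\hat{G}^n_R\to\hat{G}_n$ is identical, invoking the second line of \eqref{3.5} and the analogue of \eqref{estimate-IA-4} for $g_0$ instead.

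Since all the ingredients — the contraction estimate \eqref{3.5}, the finiteness of $\lambda^n_\beta$, and the limit \eqref{estimate-IA-4} — are already in hand, there is essentially no obstacle. The only things worth flagging are the reasons the statement is phrased \emph{locally}: the factor $|\xi|^\beta$ prevents uniformity over all of $\mathbb{R}^3$ in $\xi$, and the factor $e^{\lambda^n_\beta t}$ prevents uniformity over all of $\mathbb{R}^+$ in $t$. Conversely, the same bound automatically handles the low-frequency regime, where $\hat{F}^n_R(t,0)=\hat{F}_n(t,0)=1$, so no separate treatment near $\xi=0$ is required.
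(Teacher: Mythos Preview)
Your proposal is correct and takes essentially the same approach as the paper: both use the pointwise bound $|\hat{F}^n_R(t,\xi)-\hat{F}_n(t,\xi)|\le |\xi|^\beta e^{\lambda^n_\beta t}\|\hat{f}_{0R}-\hat{f}_0\|_{\mathcal{D}^\beta}$ coming from \eqref{3.5}, and then invoke \eqref{estimate-IA-4} to send the right-hand side to zero locally uniformly in $(t,\xi)$.
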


{{} Putting}
\begin{equation}\label{3.8}
\Phi^{nR}_1(t,\xi)=\hat{F}^n_R(t,\xi)-\hat{G}^n_R(t,\xi)-\widetilde{P}^n_R(t,\xi)
\end{equation}
with
\begin{eqnarray}\label{3.9}
\widetilde{P}^n_R(t,\xi)&=&\frac 12 e^{-A_n t}\sum\limits_{j,l=1}^3P^R_{jl}(0)\xi_j\xi_lX(\xi),\\
A_n&=&\frac{3}{4\overline{\sigma}_n}\int\limits_{\mathbb{S}^2}\mathcal{B}_n\left(\frac{\xi\cdot\sigma}{|\xi|}\right)
\left[1-\left(\frac{\xi\cdot\sigma}{|\xi|}\right)^2\right]d\sigma,\nonumber
\end{eqnarray}
we now deduce the equation for $\Phi^{nR}_1(t,\xi)$. Set
\begin{equation}\label{3.10}
\hat{Q}^+_n\left(\hat{F},\hat{G}\right)=\frac{1}{\overline{\sigma}_n}\int\limits_{\mathbb{S}^2}\mathcal{B}_n\left(\frac{\xi\cdot\sigma}{|\xi|}\right)
\hat{F}(t,\xi^+)\hat{G}(t,\xi^-)d\sigma.
\end{equation}
Since $\hat{F}^n_R(t,\xi)$ and $\hat{G}^n_R(t,\xi)$ satisfy
$$
\left\{
\begin{array}{l}
\partial_t \hat{F}^n_R+\hat{F}^n_R=\hat{Q}^+\left(\hat{F}^n_R,\hat{F}^n_R\right),\\[3mm]
\partial_t \hat{G}^n_R+\hat{G}^n_R=\hat{Q}^+\left(\hat{G}^n_R,\hat{G}^n_R\right),
\end{array}
\right.
$$
we have
\begin{eqnarray}\label{3.11}
\partial_t\Phi^{nR}_1+\Phi^{nR}_1&=&-\left(\partial_t\widetilde{P}^n_R+\widetilde{P}^n_R\right)
+\hat{Q}^+\left(\Phi_1^{nR},\hat{F}^n_R\right)+\hat{Q}^+\left(\hat{G}^n_R,\Phi_1^{nR}\right)\nonumber\\
&&+\hat{Q}^+\left(\widetilde{P}^n_R,\hat{F}^n_R\right)
+\hat{Q}^+\left(\hat{G}^n_R,\widetilde{P}^n_R\right),\\
\Phi^{nR}_1(0,\xi)&=&\hat{f}_{0R}(\xi)-\hat{g}_{0R}(\xi)-\widetilde{P}^n_R(0,\xi).\nonumber
\end{eqnarray}

Let
\begin{equation}\label{3.12}
\Phi^{n}_1(t,\xi)=\hat{F}^n(t,\xi)-\hat{G}^n(t,\xi)-\widetilde{P}^n(t,\xi).
\end{equation}
By taking $R\to+\infty$, we have from Lemma \ref{conv-f-n-R} and Lemma \ref{convergence-correction-function} that $\Phi^{nR}_1(t,\xi)\to \Phi^n_1(t,\xi)$ uniformly, locally with respect to $t\in\mathbb{R}^+$ and $\xi\in\mathbb{R}^3$ as $R\to+\infty$. To derive the equation for $\Phi^n_1(t,\xi)$,
we firstly study
\begin{equation}\label{3.13}
E^n_R(t,\xi)=\hat{Q}^+\left(\widetilde{P}^n_R,\hat{F}^n_R\right)
+\hat{Q}^+\left(\hat{G}^n_R,\widetilde{P}^n_R\right)-\left(\partial_t\widetilde{P}^n_R(t,\xi)+\widetilde{P}^n_R(t,\xi)\right).
\end{equation}
In fact, for $E^n_R(t,\xi)$, we have
\begin{Lemma}\label{I-6-R} It holds that
\begin{eqnarray}\label{3.14}
\lim\limits_{R\to+\infty}E^n_R(t,\xi)=E^n(t,\xi),
\end{eqnarray}
uniformly, locally with respect to $t\in\mathbb{R}^+$ and $\xi\in\mathbb{R}^3$.
 And $E^n(t,\xi)$ satisfies
\begin{equation}\label{3.15}
\left|E^n(t,\xi)\right|\leq
\left\{
\begin{array}{rl}
O(1) |\xi|^{2+\delta}e^{-\left(A_n-\frac{\delta \lambda_\alpha^n}{\alpha} \right)t},&\quad |\xi|\leq 1,\\[2mm]
O(1)e^{-A_nt},&\quad |\xi|\geq 1
\end{array}
\right.
\end{equation}
for any $\delta\in (0,\alpha]\cap\left(0,\frac{\alpha A_n}{\lambda^n_\alpha}\right)$ and some positive constant $O(1)$ independent of $t, \xi, R$
and $n$.
\end{Lemma}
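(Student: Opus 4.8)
The plan is to compute $E_R^n(t,\xi)$ explicitly using the structure of $\widetilde{P}_R^n$, take the limit $R\to\infty$ termwise, and then estimate the resulting $E^n(t,\xi)$ by splitting the collision gain terms into their ``diagonal'' contribution (which cancels against $\partial_t\widetilde P^n + \widetilde P^n$) and the remainder.

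First I would unpack $\widehat Q^+_n(\widetilde P_R^n,\widehat F_R^n) + \widehat Q^+_n(\widehat G_R^n,\widetilde P_R^n)$. Since $\widetilde P_R^n(t,\xi) = \tfrac12 e^{-A_n t}\sum_{j,l}P^R_{jl}(0)\xi_j\xi_l X(\xi)$ is a quadratic polynomial in $\xi$ (cut off by $X$), on the region $|\xi|\le 1$ the cutoff is inactive, so $\widetilde P_R^n$ is genuinely the homogeneous quadratic $\tfrac12 e^{-A_n t}\sum_{j,l}P^R_{jl}(0)\xi_j\xi_l$. Evaluating at $\xi^\pm$ and using $|\xi^+|^2+|\xi^-|^2=|\xi|^2$ together with $\widehat F_R^n(t,0)=\widehat G_R^n(t,0)=1$, the leading-order piece of the gain term is $e^{-A_n t}$ times an $\mathbb S^2$-average of $\tfrac12\sum_{j,l}P^R_{jl}(0)(\xi^+_j\xi^+_l + \xi^-_j\xi^-_l)$. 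The classical Ikenberry--Truesdell computation (this is exactly why the constant $A_n$ and the matrix structure were chosen) shows that for the trace-free tensor $P^R_{jl}(0)$, the $\sigma$-average of $\sum_{j,l}P^R_{jl}(0)(\xi^+_j\xi^+_l + \xi^-_j\xi^-_l)$ equals $(1-\tfrac{4}{3}A_n/\overline\sigma_n \cdot \overline\sigma_n)\sum_{j,l}P^R_{jl}(0)\xi_j\xi_l$; more precisely it reproduces $\sum_{j,l}P^R_{jl}(0)\xi_j\xi_l$ up to the factor $(1-A_n)$ that, combined with the equation $\partial_t\widetilde P_R^n = -A_n\widetilde P_R^n$, makes $\widehat Q^+(\widetilde P_R^n,\cdot) + \widehat Q^+(\cdot,\widetilde P_R^n) - (\partial_t\widetilde P_R^n + \widetilde P_R^n)$ collapse to lower-order terms on $|\xi|\le 1$. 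What survives are: (a) the contribution where $\widehat F_R^n$ or $\widehat G_R^n$ is replaced by $(\widehat F_R^n - 1)$ or $(\widehat G_R^n - 1)$, i.e.\ terms like $\tfrac{1}{\overline\sigma_n}\int_{\mathbb S^2}\mathcal B_n\,\widetilde P_R^n(t,\xi^+)(\widehat F_R^n(t,\xi^-)-1)\,d\sigma$, and symmetrically, and (b) the error from the cutoff $X$ near $|\xi|\sim 1$.

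Next I would take $R\to\infty$. By Lemma~\ref{conv-f-n-R} we have $\widehat F_R^n\to\widehat F_n$ and $\widehat G_R^n\to\widehat G_n$ uniformly on compact sets in $(t,\xi)$, and by Lemma~\ref{convergence-correction-function} we have $P^R_{jl}(0)\to P_{jl}(0)$, hence $\widetilde P_R^n\to\widetilde P^n$ uniformly on compacts. Since all the terms defining $E_R^n$ are continuous functions of these quantities (the $\mathbb S^2$-integral against the bounded kernel $\mathcal B_n$ is harmless), $E_R^n(t,\xi)\to E^n(t,\xi)$ uniformly on compacts, giving \eqref{3.14}, and $E^n$ is given by the same formula with the $R$-subscripts removed.

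Finally, for the bound \eqref{3.15}: on $|\xi|\le 1$ the surviving terms of type (a) are each bounded by $e^{-A_n t}$ times $\sup_{\mathbb S^2}|\widetilde P^n(t,\xi^\pm)|\cdot\sup_{\mathbb S^2}|\widehat F_n(t,\xi^\mp)-1|$ (and the symmetric version). Here $|\widetilde P^n(t,\xi^\pm)|\lesssim e^{-A_n t}|\xi^\pm|^2\lesssim e^{-A_n t}|\xi|^2$, while $|\widehat F_n(t,\xi^\mp)-1|\le |\xi^\mp|^\alpha\|\widehat F_n(t,\cdot)-1\|_{\mathcal D^\alpha}\lesssim |\xi|^\alpha e^{\lambda_\alpha^n t}$ using the propagation estimate \eqref{K-alpha-stability-1}. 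Wait --- that would give $|\xi|^{2+\alpha}$ and a bad exponent; I should instead keep only one factor of decay and bound $|\widetilde P^n|\lesssim e^{-A_n t}|\xi|^{2}$ against $|\widehat F_n - 1|\lesssim |\xi|^{\delta}$ for $|\xi|\le 1$ when $\delta\le\alpha$ (since $\mathcal K^\alpha\subset\mathcal K^\delta$ with norm controlled by the $\mathcal D^\alpha$-norm, which grows like $e^{\lambda_\alpha^n t}$, but in fact $\mathcal D^\delta$-norm grows like $e^{\lambda_\delta^n t}$ and $\lambda_\delta^n/\delta$ is what enters). The cleanest route: bound $|\widetilde P^n(t,\xi^\pm)(\widehat F_n(t,\xi^\mp)-1)|\lesssim e^{-A_n t}|\xi|^{2}\cdot |\xi|^{\delta} e^{\frac{\delta\lambda_\alpha^n}{\alpha}t}$, using that $\|\widehat F_n(t)-1\|_{\mathcal D^{\delta'}}$ for $\delta'\le\alpha$ grows no faster than $e^{\delta'\mu_\alpha^n t}$ with $\mu_\alpha^n = \lambda_\alpha^n/\alpha$ --- this monotonicity-of-eigenvalues fact is the standard one used throughout this circle of papers. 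That yields the $|\xi|^{2+\delta}e^{-(A_n - \delta\lambda_\alpha^n/\alpha)t}$ bound on $|\xi|\le 1$; note $|\xi|^{2+\delta}\le |\xi|^{2+\delta}$ trivially and we may absorb $|\xi|^2$ into a constant only after first extracting $|\xi|^{2+\delta}$, so one keeps exactly $|\xi|^{2+\delta}$ and discards the extra positive power. The condition $\delta\in(0,\alpha]\cap(0,\alpha A_n/\lambda_\alpha^n)$ is precisely what makes the exponent $A_n - \delta\lambda_\alpha^n/\alpha$ positive. On $|\xi|\ge 1$, $\widetilde P^n(t,\xi)$ and $\widetilde P^n(t,\xi^\pm)$ are all $O(1)e^{-A_n t}$ (bounded since $X$ is compactly supported and $|\xi^\pm|\le|\xi|$ but the cutoff $X(\xi^\pm)$ and $X(\xi)$ confine everything to $|\xi^\pm|\le 2$), and $|\widehat F_n-1|, |\widehat G_n-1|\le 1$, so every term in $E^n$ is $O(1)e^{-A_n t}$; uniformity in $n$ of the $O(1)$ comes from $\mathcal B_n/\overline\sigma_n$ being a probability density on $\mathbb S^2$ and from the uniform-in-$n$ bounds on $\|\widehat F_n(t)-1\|_{\mathcal D^\beta}$ already recorded. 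The main obstacle is the bookkeeping in the first step --- correctly identifying which terms cancel against $\partial_t\widetilde P_R^n + \widetilde P_R^n$ via the Ikenberry--Truesdell identity and verifying that the cutoff $X$ contributes only terms supported in $1\le|\xi|\le 2$ that are harmless; the limiting and the final estimates are then routine.
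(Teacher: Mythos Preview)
Your overall strategy matches the paper's proof exactly: expand $E_R^n$, pass to the limit using Lemma~\ref{conv-f-n-R} and Lemma~\ref{convergence-correction-function}, use the Ikenberry--Truesdell identity to kill the leading quadratic piece on $|\xi|\le 1$, and bound the remainder (what the paper calls $I_7$) using the smallness of $\widehat F_n-1$ and $\widehat G_n-1$. The $|\xi|\ge 1$ case is also handled the same way.

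There is, however, a genuine gap in your justification of the key remainder bound. You want $|\widehat F_n(t,\xi)-1|\lesssim |\xi|^\delta e^{(\delta\lambda_\alpha^n/\alpha)t}$ for $\delta\le\alpha$, and you obtain it by asserting that $\|\widehat F_n(t)-1\|_{\mathcal D^{\delta}}$ grows no faster than $e^{\delta\mu_\alpha^n t}$, calling this a ``monotonicity-of-eigenvalues fact.'' That monotonicity goes the wrong way: the function $\alpha\mapsto\mu_\alpha^n=\lambda_\alpha^n/\alpha$ is \emph{decreasing} on $(0,2]$ (it vanishes at $\alpha=2$ and blows up as $\alpha\to 0^+$), so for $\delta<\alpha$ one has $\mu_\delta^n>\mu_\alpha^n$, and the direct $\mathcal D^\delta$ propagation estimate gives a \emph{worse} exponent than you need.

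The paper avoids this entirely by a one-line interpolation: from $|\widehat F_n(t,\xi)-1|\le 2$ and $|\widehat F_n(t,\xi)-1|\le |\xi|^\alpha e^{\lambda_\alpha^n t}\|\widehat f_0-1\|_{\mathcal D^\alpha}$ one gets, for any $\varepsilon\in(0,1]$,
\[
|\widehat F_n(t,\xi)-1|\lesssim |\xi|^{\varepsilon\alpha}e^{\varepsilon\lambda_\alpha^n t},
\]
and setting $\delta=\varepsilon\alpha$ gives exactly the bound you need, with an $O(1)$ constant independent of $n$. Replace your monotonicity invocation with this interpolation and the proof goes through as you outlined.
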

\begin{proof} Since
\begin{eqnarray*}
E^n_R(t,\xi)&=&-\left(\partial_t\widetilde{P}^n_R(t,\xi)+\widetilde{P}^n_R(t,\xi)\right)\\
&&+\frac 1{2\overline{\sigma}_n}\sum\limits_{j,l=1}^3e^{-A_nt}P^R_{jl}(0)
\int\limits_{\mathbb{S}^2}\mathcal{B}_n\left(\frac{\xi\cdot\sigma}{|\xi|}\right)\Big[\xi_j^+\xi_l^+X(\xi^+)
+\xi_j^-\xi_l^-X(\xi^-)\Big]d\sigma\\
&&+\frac 1{2\overline{\sigma}_n}\sum\limits_{j,l=1}^3e^{-A_nt}P^R_{jl}(0)
\int\limits_{\mathbb{S}^2}\mathcal{B}_n\left(\frac{\xi\cdot\sigma}{|\xi|}\right)\left[\xi_j^+\xi_l^+X(\xi^+)\left(\hat{F}^n_R(t,\xi^-)-1\right)
+\xi_j^-\xi_l^-X(\xi^-)\left(\hat{G}^n_R(t,\xi^+)-1\right)\right]d\sigma,
\end{eqnarray*}
{{} it follows} from Lemma \ref{conv-f-n-R} and Lemma \ref{convergence-correction-function} that
\begin{equation}\label{3.16}
\lim\limits_{R\to+\infty}E^n_R(t,\xi)=E^n(t,\xi)
\end{equation}
uniformly, locally with respect to $t\in\mathbb{R}^+$ and $\xi\in\mathbb{R}^3$. Here
\begin{eqnarray}\label{3.17}
E^n(t,\xi)&=&-\left(\partial_t\widetilde{P}^n(t,\xi)+\widetilde{P}^n(t,\xi)\right)+\underbrace{\frac 1{2\overline{\sigma}_n}\sum\limits_{j,l=1}^3e^{-A_nt}P_{jl}(0)
\int\limits_{\mathbb{S}^2}\mathcal{B}_n\left(\frac{\xi\cdot\sigma}{|\xi|}\right)\Big[\xi_j^+\xi_l^+X(\xi^+)
+\xi_j^-\xi_l^-X(\xi^-)\Big]d\sigma}_{I_6}\\
&&+\underbrace{\frac 1{2\overline{\sigma}_n}\sum\limits_{j,l=1}^3e^{-A_nt}P_{jl}(0)
\int\limits_{\mathbb{S}^2}\mathcal{B}_n\left(\frac{\xi\cdot\sigma}{|\xi|}\right)\left[\xi_j^+\xi_l^+X(\xi^+)\left(\hat{F}^n(t,\xi^-)-1\right)
+\xi_j^-\xi_l^-X(\xi^-)\left(\hat{G}^n(t,\xi^+)-1\right)\right]d\sigma}_{I_7},\nonumber
\end{eqnarray}
and
\begin{equation}\label{3.18}
\widetilde{P}^n(t,\xi)=\frac 12 e^{-A_n t}\sum\limits_{j,l=1}^3P_{jl}(0)\xi_j\xi_lX(\xi).
\end{equation}
To estimate the  bounds on $I_6$ and $I_7$, firstly note that for $|\xi|\geq 1$,
$$
\left|\hat{F}^n(t,\xi)\right|\leq 1,\quad \left|\hat{G}^n(t,\xi)\right|\leq 1
$$
{{} imply} that
\begin{equation}\label{3.19}
\left|E^n(t,\xi)\right|\leq O(1)(1+A_n)e^{-A_nt}\leq O(1)e^{-A_nt}.
\end{equation}
Here we have used the fact that $A_n$ has a uniform upper bound for any $n\in\mathbb{N}$.

If $|\xi|\leq 1$, then $|\xi^\pm|\leq 1$ so that $X(\xi^\pm)\equiv 1$. Hence, as obtained in \cite{Gabetta-Toscani-Wennberg-JSP-1995}, we have
$$
\xi_j^-\xi_l^-+\xi_j^+\xi_l^+= \frac 12\left(\xi_j\xi_l+|\xi|^2\sigma_j\sigma_l\right),
$$
and
$$
\frac{1}{\overline{\sigma}_n}\int\limits_{\mathbb{S}^2}\mathcal{B}_n\left(\frac{\xi\cdot\sigma}{|\xi|}\right)\sigma_j\sigma_ld\sigma
=\frac{2A_n}{3}\delta_{jl}+\left(1-2A_n\right)\frac{\xi_j\xi_l}{|\xi|^2}.
$$
Then
\begin{eqnarray*}
I_6&=&\frac 1{2\overline{\sigma}_n}\sum\limits_{j,l=1}^3e^{-A_nt}P_{jl}(0)
\int\limits_{\mathbb{S}^2}\mathcal{B}_n\left(\frac{\xi\cdot\sigma}{|\xi|}\right)\Big[\xi_j^+\xi_l^+
+\xi_j^-\xi_l^-\Big]d\sigma\\
&=&\frac 1{4\overline{\sigma}_n}\sum\limits_{j,l=1}^3e^{-A_nt}P_{jl}(0)
\int\limits_{\mathbb{S}^2}\mathcal{B}_n\left(\frac{\xi\cdot\sigma}{|\xi|}\right)\Big[\xi_j\xi_l
+|\xi|^2\sigma_j\sigma_l\Big]d\sigma\\
&=&\frac 14\sum\limits_{j,l=1}^3e^{-A_nt}P_{jl}(0)\xi_j\xi_l
+\frac 14\sum\limits_{j,l=1}^3e^{-A_nt}P_{jl}(0)\left[(1-2A_n)\xi_j\xi_l+\frac{2A_n}{3}\delta_{jl}|\xi|^2\right]\\
&=&(1-A_n)\widetilde{P}^n(t,\xi)= \partial_t\widetilde{P}^n(t,\xi)+\widetilde{P}^n(t,\xi).
\end{eqnarray*}
Thus for $|\xi|\leq 1$, it holds that
\begin{equation}\label{3.20}
I_6-\left(\partial_t\widetilde{P}^n(t,\xi)+\widetilde{P}^n(t,\xi)\right)=0.
\end{equation}
For $I_7$ when $|\xi|\leq 1$, we have from the assumption $f_0(v), g_0(v)\in \widetilde{\mathcal{P}}^\alpha(\mathbb{R}^3)$ and Lemma \ref{global-solvability} that
\begin{eqnarray*}
&&\left|\hat{F}_n(t,\xi)-1\right|\leq |\xi|^\alpha\left\|\hat{F}_n(t,\cdot)-1\right\|_{\mathcal{D}^\alpha}\leq e^{\lambda_\alpha^nt}|\xi|^\alpha \left\|\hat{f}_0(\cdot)-1\right\|_{\mathcal{D}^\alpha},\\
&&\left|\hat{G}_n(t,\xi)-1\right|\leq |\xi|^\alpha\left\|\hat{G}_n(t,\cdot)-1\right\|_{\mathcal{D}^\alpha}\leq e^{\lambda_\alpha^nt}|\xi|^\alpha \left\|\hat{g}_0(\cdot)-1\right\|_{\mathcal{D}^\alpha}.
\end{eqnarray*}
The above estimates together with $\left|\hat{F}^n(t,\xi)\right|\leq 1
$ and $ \left|\hat{G}^n(t,\xi)\right|\leq 1$ imply that
\begin{equation}\label{3.21}
\left|\hat{F}_n(t,\xi)-1\right|+\left|\hat{G}_n(t,\xi)-1\right|\leq O(1)|\xi|^{\varepsilon\alpha}e^{\varepsilon \lambda_\alpha^nt}
\end{equation}
 for any $\varepsilon\in(0,1]$. Consequently, for $|\xi|\leq 1$,
\begin{equation}\label{3.22}
|I_7|\leq O(1) |\xi|^{2+\varepsilon\alpha}e^{-\left(A_n-\varepsilon\lambda_\alpha^n\right)t}.
\end{equation}
\eqref{3.20} together with \eqref{3.22} imply that
\begin{equation}\label{3.23}
\left|E^n(t,\xi)\right|\leq O(1) |\xi|^{2+\varepsilon\alpha}e^{-\left(A_n-\varepsilon\lambda_\alpha^n\right)t}, \quad |\xi|\leq 1.
\end{equation}

With \eqref{3.19} and \eqref{3.23}, let $\delta=\varepsilon\alpha$, the estimate \eqref{3.15} follows immediately. This completes the proof of the
lemma.
\end{proof}

Now by letting $R\to+\infty$ in \eqref{3.11}, we get from Lemma \ref{convergence-correction-function}, Lemma \ref{conv-f-n-R} and Lemma \ref{I-6-R} that $\Phi^{n}_1(t,\xi)=\hat{F}^n(t,\xi)-\hat{G}^n(t,\xi)-\widetilde{P}^n(t,\xi)$ solves
\begin{eqnarray}
\partial_t\Phi^{n}_1+\Phi^{n}_1&=&\hat{Q}^+\left(\Phi_1^{n},\hat{F}^n\right)+\hat{Q}^+\left(\hat{G}^n,\Phi_1^{n}\right)
+E^n(t,\xi),\label{3.24}\\
\Phi^{n}_1(0,\xi)&=&\hat{f}_{0}(\xi)-\hat{g}_{0}(\xi)-\widetilde{P}^n(0,\xi).\label{3.25}
\end{eqnarray}
Here $E^n(t,\xi)$ satisfies \eqref{3.15}. By Lemmas \ref{convergence-correction-function},  \ref{conv-f-n-R} and \ref{I-6-R},  $\Phi^n_1(t,\xi)$, $\hat{F}^n(t,\xi)$, $\hat{G}_n(t,\xi)$ and $E^n(t,\xi)$ are continuous functions of $(t,\xi)\in \mathbb{R}^+\times\mathbb{R}^3$ and satisfy \eqref{3.24} in the sense of distribution. Since $\Phi^n_1(t,\xi)$, $\hat{F}^n(t,\xi)$, $\hat{G}_n(t,\xi)$, and $E^n(t,\xi)$  are uniformly bounded,  $\partial_t\Phi^n_1(t,\xi)$ is also uniformly bounded so that $\Phi^n_1(t,\xi)$ is globally Lipschitz continuous with respect to $t$. Hence \eqref{3.24}  holds almost everywhere.
Furthermore, by the continuity of $\Phi^n_1(t,\xi)$, $\hat{F}^n(t,\xi)$, $\hat{G}_n(t,\xi)$ and $E^n(t,\xi)$, we have that $\partial_t\Phi^n_1(t,\xi)$ is a continuous function of $(t,\xi)\in \mathbb{R}^+\times\mathbb{R}^3$ and consequently $\Phi^n_1(t,\xi)$ satisfies \eqref{3.24} everywhere.

The next lemma is about the  upper bound on $\left\|\Phi^n_1(t,\cdot)\right\|_{\mathcal{D}^{2+\delta}}$  for some $\delta\in(0,\alpha]\cap\left(0,\frac{\alpha A_n}{\lambda_n}\right)$.
\begin{Lemma}\label{Lemma-3.3} If $\left\|\Phi^n_1(0,\cdot)\right\|_{\mathcal{D}^{2+\delta}}<+\infty$ with $\delta\in (0,\alpha]\cap\left(0,\frac{\alpha A_n}{\lambda_n}\right)$, then
\begin{equation}\label{3.26}
\left\|\Phi^n_1(t,\cdot)\right\|_{\mathcal{D}^{2+\delta}}\lesssim e^{-\eta^n_0t}.
\end{equation}
Here $\eta^n_0=\min\left\{B_n,A_n-\frac{\delta\lambda^n_\alpha}{\alpha}\right\}$ with
\begin{equation}\label{3.27}
B_n=\frac{1}{\overline{\sigma}_n}\int\limits_{\mathbb{S}^2}\mathcal{B}_n\left(\frac{\sigma\cdot\xi}{|\xi|}\right)\left(1-\left|\cos\frac\theta 2\right|^{2+\delta}-\left|\sin\frac\theta 2\right|^{2+\delta}\right)d\sigma,\quad \cos\theta=\frac{\sigma\cdot\xi}{|\xi|}.
\end{equation}
\end{Lemma}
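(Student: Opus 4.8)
The plan is to treat \eqref{3.24}--\eqref{3.25} as a linear equation for $\Phi_1^n$ with source $E^n$, pass to its Duhamel form
\[
\Phi_1^n(t,\xi)=e^{-t}\Phi_1^n(0,\xi)+\int_0^t e^{-(t-\tau)}\Big[\hat{Q}^+\big(\Phi_1^n,\hat{F}^n\big)+\hat{Q}^+\big(\hat{G}^n,\Phi_1^n\big)+E^n\Big](\tau,\xi)\,d\tau,
\]
and close a Gronwall estimate for $m(t):=\big\|\Phi_1^n(t,\cdot)\big\|_{\mathcal{D}^{2+\delta}}$. First I would record the geometric identities $|\xi^+|=|\xi|\cos\frac\theta2$, $|\xi^-|=|\xi|\sin\frac\theta2$, which follow from \eqref{xi_pm}--\eqref{relation-xi_pm} with $\cos\theta=\sigma\cdot\xi/|\xi|$.

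The core is that the gain term defined in \eqref{3.10} contracts in the $\mathcal{D}^{2+\delta}$ semi-norm by exactly the factor responsible for $B_n$. Since $\hat{F}^n,\hat{G}^n$ are characteristic functions of probability measures, $|\hat{F}^n|\le1$ and $|\hat{G}^n|\le1$, so
\[
\frac{\big|\hat{Q}^+(\Phi_1^n,\hat{F}^n)(\tau,\xi)\big|}{|\xi|^{2+\delta}}\le\frac1{\overline{\sigma}_n}\int_{\mathbb{S}^2}\mathcal{B}_n\!\Big(\tfrac{\sigma\cdot\xi}{|\xi|}\Big)\frac{\big|\Phi_1^n(\tau,\xi^+)\big|}{|\xi^+|^{2+\delta}}\Big|\cos\tfrac\theta2\Big|^{2+\delta}d\sigma\le m(\tau)\,\frac1{\overline{\sigma}_n}\int_{\mathbb{S}^2}\mathcal{B}_n\Big|\cos\tfrac\theta2\Big|^{2+\delta}d\sigma,
\]
and symmetrically the other gain term is $\le m(\tau)\,\overline{\sigma}_n^{-1}\int_{\mathbb{S}^2}\mathcal{B}_n|\sin\frac\theta2|^{2+\delta}d\sigma$; adding and using $\overline{\sigma}_n^{-1}\int_{\mathbb{S}^2}\mathcal{B}_n\,d\sigma=1$ from \eqref{3.2} identifies the sum as $(1-B_n)m(\tau)$ with $B_n$ as in \eqref{3.27} (the $\sigma$-integrals are $\xi$-independent by rotational invariance of the sphere). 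For the source I would invoke \eqref{3.15} of Lemma \ref{I-6-R}: after dividing by $|\xi|^{2+\delta}$ it reads $|E^n(\tau,\xi)|/|\xi|^{2+\delta}\le O(1)e^{-(A_n-\delta\lambda_\alpha^n/\alpha)\tau}$ for all $\xi$ — directly for $|\xi|\le1$, and for $|\xi|\ge1$ because $|\xi|^{2+\delta}\ge1$ and $A_n\ge A_n-\delta\lambda_\alpha^n/\alpha$ (here $\lambda_\alpha^n\ge0$ by subadditivity of $x\mapsto x^{\alpha/2}$ applied to $|\xi^\pm|^2$, using \eqref{relation-xi_pm}). Inserting these two bounds into the Duhamel identity and taking the supremum over $\xi$ gives
\[
m(t)\le e^{-t}m(0)+\int_0^t e^{-(t-\tau)}\Big[(1-B_n)m(\tau)+O(1)e^{-(A_n-\delta\lambda_\alpha^n/\alpha)\tau}\Big]d\tau,
\]
so that Gronwall's inequality applied to $z(t)=e^tm(t)$ (rate $1-B_n$, forcing $\lesssim e^{(1-A_n+\delta\lambda_\alpha^n/\alpha)\tau}$) yields $m(t)\lesssim e^{-B_nt}+e^{-(A_n-\delta\lambda_\alpha^n/\alpha)t}\lesssim e^{-\eta_0^nt}$. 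Here $\eta_0^n>0$ because the admitted range of $\delta$ forces $A_n-\delta\lambda_\alpha^n/\alpha>0$ and because $B_n>0$ (the integrand $1-|\cos\frac\theta2|^{2+\delta}-|\sin\frac\theta2|^{2+\delta}$ is nonnegative, strictly positive for $\theta\in(0,\pi/2]$, and $\mathcal{B}_n$ is positive near $\theta=0$); in the non-generic case $B_n=A_n-\delta\lambda_\alpha^n/\alpha$ one picks up a factor $1+t$, absorbed by replacing $\eta_0^n$ with any strictly smaller exponent.

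The one step that is not routine — and the main obstacle — is the a priori finiteness of $m(t)$ for $t>0$: since $\mathcal{K}^{2+\delta}(\mathbb{R}^3)=\{1\}$, none of $\hat{F}^n-1$, $\hat{G}^n-1$, $\hat{F}^n-\hat{G}^n$ lies in $\mathcal{D}^{2+\delta}$ by itself, and finiteness of $\|\Phi_1^n(t,\cdot)\|_{\mathcal{D}^{2+\delta}}$ rests solely on the cancellation produced by subtracting the corrector $\widetilde{P}^n$, while only $m(0)<\infty$ is given. I would dispose of this by noting that $\Phi_1^n$ is bounded (by $|\hat{F}^n|+|\hat{G}^n|+|\widetilde{P}^n|$, the last being $O(1)$ since $X$ is compactly supported) and vanishes at $\xi=0$, so $\sup_{|\xi|\ge1}|\Phi_1^n(t,\xi)|/|\xi|^{2+\delta}\le\|\Phi_1^n(t,\cdot)\|_{L^\infty}=O(1)$ is automatic and only the behaviour as $\xi\to0$ matters; on the ball $\{|\xi|\le1\}$, which is invariant under $\xi\mapsto\xi^\pm$ because $|\xi^\pm|\le|\xi|$, the Duhamel map is — by the same contraction estimate, and since $E^n$ belongs to the relevant space by \eqref{3.15} — a contraction for short time on the space of continuous functions on that ball that vanish at $0$ and have finite $\mathcal{D}^{2+\delta}$ semi-norm. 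Its fixed point coincides with $\Phi_1^n$ by uniqueness of bounded continuous solutions of \eqref{3.24} (whose right-hand side is Lipschitz for the bounded kernel $\mathcal{B}_n$), hence $m(t)<\infty$ on a short interval, and iterating in time gives $m(t)<\infty$ for all $t\ge0$; the Gronwall estimate above then applies verbatim.
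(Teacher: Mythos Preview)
Your proposal is correct, and the core decay argument --- extract the factors $|\cos\tfrac\theta2|^{2+\delta}$, $|\sin\tfrac\theta2|^{2+\delta}$ from the two gain terms, bound the source via Lemma~\ref{I-6-R}, and close a Gronwall inequality on $m(t)=\|\Phi_1^n(t,\cdot)\|_{\mathcal D^{2+\delta}}$ --- is exactly the paper's argument (its \eqref{3.33}--\eqref{3.35}; the paper phrases it as a differential inequality rather than Duhamel, which is immaterial).

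The one substantive difference is how the a priori finiteness $m(t)<\infty$ is obtained. The paper does \emph{not} run a fixed-point argument on the unit ball; instead it regularizes the weight, replacing $|\xi|^{2+\delta}$ by $|\xi|^2(|\xi|^\delta+\kappa)$ for $\kappa>0$. The quantity $\Phi_1^n(t,\xi)/\big(|\xi|^2(|\xi|^\delta+\kappa)\big)$ is then automatically in $L^\infty_\xi$ for each fixed $\kappa$, because $\|\hat F^n-\hat G^n\|_{\mathcal D^2}\lesssim1$ (this is \eqref{3.29}, obtained by passing to the limit $R\to\infty$ in \eqref{3.7}, hence relying on the approximation Lemma~\ref{properties-of-f-0R}) and $\widetilde P^n(t,\xi)=O(|\xi|^2)$. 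Since
\[
|\xi^+|^2(|\xi^+|^\delta+\kappa)+|\xi^-|^2(|\xi^-|^\delta+\kappa)\le|\xi|^2(|\xi|^\delta+\kappa),
\]
the same Gronwall estimate runs for the regularized quantity and delivers a bound on $[0,T]$ that is \emph{uniform in $\kappa$}; sending $\kappa\to0^+$ gives $m(t)<\infty$. Your route avoids the $\mathcal D^2$ input \eqref{3.29} and is self-contained, at the cost of the contraction/uniqueness detour; the paper's regularization is shorter but leans on the earlier approximation machinery.
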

\begin{proof}
The proof is divided into two steps, the first step is to show that $\frac{\Phi^n_1(t,\xi)}{|\xi|^{2+\delta}}\in L^\infty(\mathbb{R}^3)$. Indeed, for $\kappa>0$, we have from \eqref{3.24} that
\begin{eqnarray}\label{3.28}
&&\left(\frac{\Phi^n_1(t,\xi)}{|\xi|^{2}\left(|\xi|^\delta+\kappa\right)}\right)_t+\frac{\Phi^n_1(t,\xi)}
{|\xi|^{2}\left(|\xi|^\delta+\kappa\right)}\nonumber\\
&=&\frac{1}{\overline{\sigma}_n}\int\limits_{\mathbb{S}^2}\mathcal{B}_n\left(\frac{\sigma\cdot\xi}{|\xi|}\right)
\left[\frac{\Phi^n_1(t,\xi^+)}{|\xi^+|^{2}\left(|\xi^+|^\delta+\kappa\right)}\frac{|\xi^+|^{2}\left(|\xi^+|^\delta+\kappa\right)}
{|\xi|^{2}\left(|\xi|^\delta+\kappa\right)}\hat{F}^n(t,\xi^-)\right.\\
&&\left.+\hat{G}^n(t,\xi^+)
\frac{\Phi^n_1(t,\xi^-)}{|\xi^-|^{2}\left(|\xi^-|^\delta+\kappa\right)}\frac{|\xi^-|^{2}\left(|\xi^-|^\delta+\kappa\right)}
{|\xi|^{2}\left(|\xi|^\delta+\kappa\right)}\right]d\sigma +\frac{E^n(t,\xi)}{|\xi|^2\left(|\xi|^\delta+\kappa\right)}.\nonumber
\end{eqnarray}

On the other hand, by letting $R\to+\infty$ in \eqref{3.7}, we have from Lemma \ref{conv-f-n-R} that for $t>0$
\begin{equation}\label{3.29}
\left\|\hat{F}^n(t,\cdot)-\hat{G}^n(t,\cdot)\right\|_{\mathcal{D}^2}\leq
\left\|\hat{f}_{0}(\cdot)-\hat{g}_{0}(\cdot)\right\|_{\mathcal{D}^2}\lesssim 1.
\end{equation}

\eqref{3.29} together with the definition of $\widetilde{P}^n(t,\xi)$ imply that
$$
\frac{\Phi^n_1(t,\xi)}{|\xi|^{2}\left(|\xi|^\delta+\kappa\right)}\in L^\infty(\mathbb{R}^3),\quad t>0,
$$
for  any $\kappa>0$. Hence,  by \eqref{3.28}, Lemma \ref{I-6-R} and the fact
that
$|\xi^\pm|\leq |\xi|, |\hat{F}^n(t,\xi)|\leq 1,  |\hat{G}^n(t,\xi)|\leq 1$, we can deduce by using the Gronwall inequality that there exists a positive constant $C(T)>0$ independent of $\kappa, n$ and $\xi$ such that
\begin{equation}\label{3.30}
\sup\limits_{0\not=\xi\in\mathbb{R}^3}\left\{\frac{\left|\Phi^n_1(t,\xi)\right|}{|\xi|^{2}\left(|\xi|^\delta+\kappa\right)}\right\}
\leq C(T),
\end{equation}
holds for $0\leq t\leq T$. Here $T>0$ is any given positive constant.

Since the positive constant $C(T)>0$ in \eqref{3.30} is independent of $\kappa$, we have from \eqref{3.30} by letting $\kappa\to 0_+$ that
\begin{equation}\label{3.31}
\sup\limits_{0\not=\xi\in\mathbb{R}^3}\left\{\frac{\left|\Phi^n_1(t,\xi)\right|}{|\xi|^{2+\delta}}\right\}
\leq C(T), \quad 0\leq t\leq T.
\end{equation}

With \eqref{3.31}, set
\begin{equation}\label{3.32}
\Phi^n_2(t,\xi)=\frac{\Phi^n_1(t,\xi)}{|\xi|^{2+\delta}},
\end{equation}
we can get from \eqref{3.24} and the fact $|\xi^\pm|^2=\frac{|\xi|^2\pm\xi\cdot\sigma|\xi|}{2}=\frac{|\xi|^2(1\pm\cos\theta)}{2}$ that
\begin{eqnarray}\label{3.33}
\partial_t\Phi^n_2+\Phi^n_2&=&\frac{1}{\overline{\sigma}_n}\int\limits_{\mathbb{S}^2}\mathcal{B}_n
\left(\frac{\xi\cdot\sigma}{|\xi|}\right)\left[
\Phi^n_2(t,\xi^+)\hat{F}^n(t,\xi^-)\left|\cos\frac\theta 2\right|^{2+\delta}\right.\\
&&\left.+\hat{G}^n(t,\xi^+)\Phi^n_2(t,\xi^-)\left|\sin\frac\theta 2\right|^{2+\delta}\right]d\sigma+\frac{E^n(t,\xi)}{|\xi|^{2+\delta}}.\nonumber
\end{eqnarray}
A direct consequence of \eqref{3.33} yields
\begin{equation}\label{3.34}
\left|\partial_t\Phi^n_2+\Phi^n_2\right|\leq (1-B_n)\left\|\Phi^n_2(t,\cdot)\right\|_{L^\infty}
+O(1)e^{-(A_n-\delta\lambda^n_\alpha/\alpha)t}.
\end{equation}
Since $0<B_n<1$, we can apply the argument used in \cite{Gabetta-Toscani-Wennberg-JSP-1995} to have
\begin{equation}\label{3.35}
\left|\Phi^n_2(t,\xi)\right|\leq O(1)e^{-\eta_0t},
\end{equation}
so that \eqref{3.26} follows. This completes the proof of the
lemma.
\end{proof}

We now turn to prove Theorem \ref{Thm1.1}. Let $F_n(t,v)$ and $G_n(t,v)$ be the unique solutions of the Cauchy problem \eqref{3.3}-\eqref{3.4} with initial data $f_0(v)$ and $g_0(v)$ respectively, then
\begin{equation}\label{3.36}
\left\{
\begin{array}{l}
f_n(t,v)\equiv F_n\left(\overline{\sigma}_nt,v\right),\\[2mm]
g_n(t,v)\equiv G_n\left(\overline{\sigma}_nt,v\right),
\end{array}
\right.
\end{equation}
solve
\begin{equation}\label{3.37}
\left\{
\begin{array}{l}
\partial_tf_n+\overline{\sigma}_nf_n={\displaystyle\int\limits_{\mathbb{S}^2}}\mathcal{B}_n\left(\frac{\sigma\cdot(v-v_*)}{|v-v_*|}\right)
f_n(v')f_n(v'_*)dv_*d\sigma,\\[3mm]
f_n(0,v)=f_0(v),
\end{array}
\right.
\end{equation}
and
\begin{equation}\label{3.38}
\left\{
\begin{array}{l}
\partial_tg_n+\overline{\sigma}_ng_n={\displaystyle\int\limits_{\mathbb{S}^2}}\mathcal{B}_n\left(\frac{\sigma\cdot(v-v_*)}{|v-v_*|}\right)
g_n(v')g_n(v'_*)dv_*d\sigma,\\[3mm]
g_n(0,v)=g_0(v),
\end{array}
\right.
\end{equation}
respectively.

The estimate \eqref{3.26}  in Lemma \ref{Lemma-3.3} gives
\begin{equation}\label{3.39}
\left\|\hat{F}_n(t,\cdot)-\hat{G}_n(t,\cdot)-\widetilde{P}^n(t,\cdot)\right\|
_{\mathcal{D}^{2+\delta}}\leq O(1)e^{-\eta_0^nt}.
\end{equation}

Putting \eqref{3.36} and \eqref{3.39} together yields
\begin{equation}\label{3.40}
\left\|\hat{f}_n(t,\cdot)-\hat{g}_n(t,\cdot)
-\widetilde{P}^n\left(\overline{\sigma}_nt,\cdot\right)\right\|_{\mathcal{D}^{2+\delta}}\leq O(1)e^{-\overline{\sigma}_n\eta_0^nt}.
\end{equation}

Noticing that
$$
\lim\limits_{n\to+\infty}\overline{\sigma}_nA_n=A,\quad \lim\limits_{n\to+\infty}\overline{\sigma}_nB_n=B,
\quad \lim\limits_{n\to+\infty}\overline{\sigma}_n\lambda_\alpha^n=\lambda_\alpha,
$$
we have
\begin{equation}\label{3.41}
\lim\limits_{n\to+\infty}\overline{\sigma}_n\eta_0^n=\eta_0,\quad \lim\limits_{n\to+\infty}\widetilde{P}^n\left(\overline{\sigma}_nt,\xi\right)=\widetilde{P}(t,\xi).
\end{equation}

On the other hand, it is shown in \cite{Cannone-Karch-CPAM-2010, Morimoto-KRM-2012}  that $\left(\hat{f}_n(t,\xi), \hat{g}_n(t,\xi)\right)\to
\left(\hat{f}(t,\xi), \hat{g}(t,\xi)\right)$ uniformly as $n\to+\infty$,
 locally in with respect to $(t,\xi)\in\mathbb{R}^+\times\mathbb{R}^3$.
By  \eqref{3.40}, we  obtain from \eqref{3.41} that
\begin{equation}\label{3.42}
\left\|\hat{f}(t,\cdot)-\hat{g}(t,\cdot)
-\widetilde{P}\left(t,\cdot\right)\right\|_{\mathcal{D}^{2+\delta}}\lesssim O(1)e^{-\eta_0t}.
\end{equation}
\eqref{3.42} is exactly \eqref{stability-estimate} and thus the proof of Theorem \ref{Thm1.1} is completed.

\section{Proof of Theorem \ref{Thm1.2}}
\setcounter{equation}{0}
To prove Theorem 1.2, compared with  Theorem \ref{Thm1.1}, we only need to
obtain the uniform $H^N(\mathbb{R}^3)-$estimate \eqref{uniform-H-N-bound} on $f(t,v)$ and the key point  is to deduce the following coercivity estimate.

\begin{Lemma}\label{coercivity-estimate} There exists a sufficiently large positive constant $t_1>0$ such that
\begin{equation}\label{dissipation-f}
\int\limits_{\mathbb{S}^2}\mathcal{B}\left(\frac{\xi\cdot\sigma}{|\xi|}\right)\left(1-\left|\hat{f}(t,\xi^-)\right|\right)d\sigma
\geq \kappa {{} e^{2s\mu_\alpha t}}|\xi|^{2s} \,, \enskip {{}\mbox{if $|\xi| \ge 2$}},
\end{equation}
holds for any $t\geq t_1$ and some positive constant $\kappa>0$ which depends only on $t_1$.
\end{Lemma}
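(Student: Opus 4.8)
The plan is to exploit the lower bound $1-|\hat f(t,\xi^-)| \ge 1-\mathrm{Re}\,\hat f(t,\xi^-)$ and the known regularization $f(t,\cdot)\in H^\infty(\mathbb R^3)$ for $t>0$ (Lemma \ref{global-solvability}), converting the statement into a quantitative non-degeneracy estimate on the self-similarly rescaled profile. Concretely, I would first rescale: set $\tilde f(t,v) = e^{3\mu_\alpha t} f(t, v e^{\mu_\alpha t})$, so that $\hat{\tilde f}(t,\xi) = \hat f(t, \xi e^{-\mu_\alpha t})$. Then, writing $\eta = \xi^- e^{-\mu_\alpha t}$, the left side of \eqref{dissipation-f} becomes $\int_{\mathbb S^2}\mathcal B\big(\tfrac{\xi\cdot\sigma}{|\xi|}\big)\big(1-|\hat{\tilde f}(t,\eta)|\big)d\sigma$, and the target $e^{2s\mu_\alpha t}|\xi|^{2s}$ is precisely $|\xi e^{-\mu_\alpha t}|^{2s}\cdot e^{4s\mu_\alpha t}$... so I would instead aim to show directly that $1-|\hat f(t,\eta)| \gtrsim \min\{1,|\eta|^2\}$ uniformly for $t$ large, and then handle the angular integral. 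The point is that by Theorem \ref{Thm1.1} applied with $g_0 = \Psi_{\alpha,K}$, the rescaled solution $\hat{\tilde f}(t,\cdot)$ converges as $t\to\infty$ to $\hat\Psi_{\alpha,K}$ in $\mathcal D^{2+\delta}$, hence locally uniformly; so for $t\ge t_1$ large enough, $\hat{\tilde f}(t,\cdot)$ is $C^2$-close to the fixed profile $\hat\Psi_{\alpha,K}$ on any fixed ball.

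Second, I would establish the non-degeneracy of the limiting profile: since $\Psi_{\alpha,K}\in H^\infty\cap L^1$ is a genuine probability density (not a Dirac mass), $|\hat\Psi_{\alpha,K}(\eta)|<1$ for all $\eta\ne 0$, and moreover $1-|\hat\Psi_{\alpha,K}(\eta)| \ge c_0|\eta|^2$ for $|\eta|\le 1$ by the normalization (zero first moments, finite—by $H^\infty$—low-order behavior giving a nondegenerate quadratic term $\sim |\eta|^2$ near the origin, since $\hat\Psi_{\alpha,K}$ is radial and smooth away from the $|\eta|^\alpha$ singularity; actually the relevant expansion is $1-\hat\Psi_{\alpha,K}(\eta) = K|\eta|^\alpha + o(|\eta|^\alpha)$, which still gives $1-|\hat\Psi_{\alpha,K}(\eta)|\gtrsim |\eta|^\alpha \ge |\eta|^2$ for $|\eta|\le 1$ when $\alpha\le 2$—wait, that's the wrong direction). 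Let me instead use $1-|\hat\Psi_{\alpha,K}(\eta)| \ge 1 - \mathrm{Re}\,\hat\Psi_{\alpha,K}(\eta) = \int_{\mathbb R^3}(1-\cos(v\cdot\eta))\Psi_{\alpha,K}(v)\,dv$, which for $|\eta|\le 1$ is $\gtrsim |\eta|^2$ (the integral of $(1-\cos(v\cdot\eta))$ against a fixed density with nonzero mass on $\{|v|\ge r_0\}$), and for $|\eta|\ge 1$ is bounded below by a positive constant by the Riemann–Lebesgue lemma applied to $\Psi_{\alpha,K}\in L^1$ together with the continuity and the fact that the only place $|\hat\Psi_{\alpha,K}|=1$ is $\eta=0$. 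Then by the $C^0$-closeness on a large ball $\{|\eta|\le M\}$ and by the uniform-in-$t$ bound $|\hat f(t,\eta)| = |\hat{\tilde f}(t,\eta e^{\mu_\alpha t})|$... one transfers this lower bound: for $t\ge t_1$, $1-|\hat{\tilde f}(t,\eta)| \ge \tfrac12\min\{1,|\eta|^2\}\cdot c_0$ on $\{|\eta|\le M\}$, and for $|\eta|\ge M$ one uses the $H^\infty$ bound on $\tilde f(t,\cdot)$ (uniform in $t$, which is essentially \eqref{uniform-H-N-bound}—though to avoid circularity I would at this stage only use the qualitative regularity plus the $\mathcal D^{2+\delta}$ stability, or prove the needed piece of \eqref{uniform-H-N-bound} directly by a bootstrap as in the Cannone–Karch/Morimoto–Yang regularization theory).

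Third, having the pointwise bound $1-|\hat{\tilde f}(t,\eta)| \ge c_1\min\{1,|\eta|^2\}$ for $t\ge t_1$ (equivalently $1-|\hat f(t,\eta)| \ge c_1\min\{1, e^{2\mu_\alpha t}|\eta|^2\}$ after unscaling), I plug in $\eta = \xi^-$ and integrate against $\mathcal B$. Using $|\xi^-|^2 = \tfrac{|\xi|^2(1-\cos\theta)}{2}$, the angular integral $\int_{\mathbb S^2}\mathcal B\big(\cos\theta\big)\min\{1, e^{2\mu_\alpha t}|\xi|^2\sin^2(\theta/2)\}\,d\sigma$ must be bounded below by $\kappa\, e^{2s\mu_\alpha t}|\xi|^{2s}$ when $|\xi|\ge 2$. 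This is the usual coercivity computation for non-cutoff Maxwellian cross sections: because $\mathcal B(\cos\theta)\sim b_0\theta^{-2-2s}$ as $\theta\to0^+$, the integral behaves like $\int_0^{\theta_*}\theta^{-2-2s}\cdot e^{2\mu_\alpha t}|\xi|^2\theta^2\,d\theta + \int_{\theta_*}^{\pi/2}\theta^{-2-2s}\,d\theta$ with the crossover $\theta_*\sim (e^{2\mu_\alpha t}|\xi|^2)^{-1/2}$, and the second term dominates, giving $\sim \theta_*^{-2s}\sim (e^{\mu_\alpha t}|\xi|)^{2s}$, exactly the claimed rate. I would split the $\sigma$-integral at this threshold and estimate each piece, noting the constants are uniform in $t\ge t_1$ (depending only on $t_1$ through $c_1$) and in $|\xi|\ge 2$.

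The main obstacle I expect is the transfer step in the second paragraph: getting a lower bound on $1-|\hat{\tilde f}(t,\eta)|$ that is \emph{uniform in $t$} and \emph{uniform for large $|\eta|$}, since the $\mathcal D^{2+\delta}$ convergence only controls $\hat{\tilde f}(t,\cdot) - \hat\Psi_{\alpha,K}(\cdot)$ near the origin, and for large frequencies one needs the uniform $H^\infty$ smoothing (so that $|\hat{\tilde f}(t,\eta)|$ decays in $\eta$ uniformly in $t$, forcing $1-|\hat{\tilde f}(t,\eta)|$ to stay near $1$). Establishing that uniform regularity—which is part of \eqref{uniform-H-N-bound}—is precisely the hard analytic input, and it likely has to be proved by the same regularization-by-the-collision-operator argument used for the existence of $H^\infty$ solutions, now carried out with constants tracked uniformly in time using the very coercivity estimate \eqref{dissipation-f} one is trying to prove; breaking this apparent circularity (e.g.\ by a continuity/bootstrap argument on a maximal time interval, or by first proving a weaker uniform Sobolev bound sufficient to start) is the delicate point.
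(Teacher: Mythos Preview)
Your overall strategy---use Theorem \ref{Thm1.1} to transfer a pointwise lower bound on $1-|\hat\Psi_{\alpha,K}|$ to $1-|\hat f(t,\cdot)|$, then compute the angular integral via the singularity of $\mathcal B$---matches the paper. But the obstacle you identify at the end (needing a uniform-in-$t$ lower bound on $1-|\hat{\tilde f}(t,\eta)|$ for \emph{large} $|\eta|$, hence uniform $H^\infty$ regularity, hence circularity with \eqref{uniform-H-N-bound}) is an artifact of asking for more than is needed, and the paper's proof sidesteps it entirely.

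The point is that in the angular integral you only need the contribution from the range $\theta\in\bigl[0,\,2/(e^{\mu_\alpha t}|\xi|)\bigr]$. On this range $|\xi^-|=|\xi|\sin(\theta/2)\le e^{-\mu_\alpha t}\le 1$, so $|e^{\mu_\alpha t}\xi^-|\le 1$ and you are always in the \emph{quadratic} regime of $\min\{1,|e^{\mu_\alpha t}\xi^-|^2\}$. There the combination of (i) the ADVW-type bound $1-|\hat\Psi_{\alpha,K}(\eta)|\ge\kappa_1\min\{1,|\eta|^2\}$ (applied at $\eta=e^{\mu_\alpha t}\xi^-$) and (ii) Theorem \ref{Thm1.1}, which gives $|\hat f(t,\xi^-)-\hat\Psi_{\alpha,K}(e^{\mu_\alpha t}\xi^-)|\le\kappa_2(|\xi^-|^{2+\delta}+|\xi^-|^2)e^{-\eta_0 t}\le 2\kappa_2|\xi^-|^2e^{-\eta_0 t}$, already yields
\[
1-|\hat f(t,\xi^-)|\;\ge\;\bigl(\kappa_1 e^{2\mu_\alpha t}-2\kappa_2 e^{-\eta_0 t}\bigr)|\xi^-|^2\;\ge\;\tfrac{\kappa_1}{2}e^{2\mu_\alpha t}|\xi^-|^2
\]
for $t\ge t_1$ large. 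Integrating just over this small-angle window, with $\mathcal B(\cos\theta)\sin\theta\sim b_0\theta^{-1-2s}$ and $|\xi^-|^2\sim|\xi|^2\theta^2/4$, gives
\[
\gtrsim\; e^{2\mu_\alpha t}|\xi|^2\int_0^{2/(e^{\mu_\alpha t}|\xi|)}\theta^{1-2s}\,d\theta\;\sim\;(e^{\mu_\alpha t}|\xi|)^{2s},
\]
which is \eqref{dissipation-f}. The remaining angles contribute nonnegatively since $1-|\hat f|\ge 0$ trivially, so the ``$\min=1$'' part of the lower bound---and with it your large-frequency / uniform-$H^\infty$ worry---is never invoked. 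In your own computation you wrote both pieces of the split at $\theta_*$ and claimed the second (large-$\theta$) piece dominates; in fact both pieces are of the same order $(e^{\mu_\alpha t}|\xi|)^{2s}$, and it is precisely the first piece that can be controlled without circularity.
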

\begin{proof} Notice that
$$
\hat{\Psi}_{\alpha,K}(\xi)\in \mathcal{K}^\alpha(\mathbb{R}^3),\quad \Psi_{\alpha,K}(v)\in L^1(\mathbb{R}^3)\cap H^\infty(\mathbb{R}^3),\quad \int\limits_{\mathbb{R}^3}\Psi_{\alpha,K}(v)dv=1,
$$
we have from Theorem 1.1 of \cite{Morimoto-Wang-Yang-2014} that for $1<\beta<\alpha<2$ that $\Psi_{\alpha,K}(v)\in \mathcal{P}^\beta(\mathbb{R}^3)$, and consequently Lemma 3 of \cite{Alexandre-Desvillettes-Villani-Wennberg-ARMA-2000} shows
that there exists a positive constant $\kappa_1>0$ independent of $t$ and $\xi$ such that
\begin{equation*}
1-\left|\hat{\Psi}_{\alpha,K}(\xi)\right|\geq \kappa_1\min\left\{1, |\xi|^2\right\}.
\end{equation*}
Hence, we have
\begin{equation}\label{dissipation-Psi}
1-\left|\hat{\Psi}_{\alpha,K}\left(e^{\mu_\alpha t}\xi\right)\right|\geq \kappa_1\min\left\{1, \left|e^{\mu_\alpha t}\xi\right|^2\right\},\quad \forall (t,\xi)\in\mathbb{R}^+\times\mathbb{R}^3.
\end{equation}
On the other hand, we have from the $\mathcal{D}^{2+\delta}-$stability estimate
given in Theorem \ref{Thm1.1} that
\begin{eqnarray}\label{conseq-K-stability}
\left|\hat{f}(t,\xi)-\hat{\Psi}_{\alpha,K}\left(e^{\mu_\alpha t}\xi\right)\right|&\leq&O(1)|\xi|^{2+\delta}e^{-\eta_0 t}+\left|\widetilde{P}(t,\xi)\right|\nonumber\\
&\leq&\kappa_2\left(|\xi|^{2+\delta}+|\xi|^2\right)e^{-\eta_0 t}
\end{eqnarray}
holds for any $(t,\xi)\in\mathbb{R}^+\times\mathbb{R}^3$ with
a constant $\kappa_2>0$ independent of
 $t$ and $\xi$.

A direct consequence of \eqref{dissipation-Psi} and \eqref{conseq-K-stability} is
\begin{eqnarray}\label{dissipation-f-2}
1-\left|\hat{f}(t,\xi)\right|&\geq& \left(1-\left|\hat{\Psi}_{\alpha,K}\left(e^{\mu_\alpha t}\xi\right)\right|\right)-\left|\hat{f}(t,\xi)-\hat{\Psi}_{\alpha,K}\left(e^{\mu_\alpha t}\xi\right)\right|\nonumber\\
&\geq& \kappa_1 \min\left\{1, \left|e^{\mu_\alpha t}\xi\right|^2\right\}
-\kappa_2\left(|\xi|^{2+\delta}+|\xi|^2\right)e^{-\eta_0 t},\quad \forall (t,\xi)\in\mathbb{R}^+\times\mathbb{R}^3.
\end{eqnarray}
Thus
\begin{equation}\label{dissipation-f-3}
1-\left|\hat{f}(t,\xi)\right|\geq\max\left\{0, \kappa_1 \min\left\{1, \left|e^{\mu_\alpha t}\xi\right|^2\right\}
-\kappa_2\left(|\xi|^{2+\delta}+|\xi|^2\right)e^{-\eta_0 t}\right\}.
\end{equation}

With \eqref{dissipation-f-3}, we now turn to prove \eqref{dissipation-f}. Firstly, note that
$|\xi^-|^2=|\xi|^2\sin^2\frac\theta 2$. If we choose $t_1>0$ sufficiently large such that
\begin{equation}\label{choose-t-1}
\kappa_1e^{2\mu_\alpha t}-2\kappa_2e^{-\eta_0 t}\geq\frac{\kappa_1}{2}e^{2\mu_\alpha t}
+ \frac{\kappa_1}{2}e^{2\mu_\alpha t_1}-2\kappa_2e^{-\eta_0 t_1}\geq \frac{\kappa_1}{2}e^{2\mu_\alpha t}
,\quad \forall t\geq t_1,
\end{equation}
then for $t\geq t_1$, $|\xi| \ge 2$ and $\theta$ sufficiently small such that
\begin{equation}\label{range-theta}
\theta\in\left[0,\frac{2}{e^{\mu_\alpha t}|\xi|}\right]\subset\left[0,\frac \pi 2\right),
\end{equation}
we have
\begin{eqnarray}\label{lower-bound-f}
&&\kappa_1 \min\left\{1, \left|e^{\mu_\alpha t}\xi^-\right|^2\right\}
-\kappa_2\left(\left|\xi^-\right|^{2+\delta}+\left|\xi^-\right|^2\right)e^{-\eta_0 t}\nonumber\\
&\geq& \kappa_1 \left|e^{\mu_\alpha t}\xi^-\right|^2
-\kappa_2\left(\left|\xi^-\right|^{2+\delta}+\left|\xi^-\right|^2\right)e^{-\eta_0 t}\\
&\geq& \left(\kappa_1 e^{2\mu_\alpha t}
-2\kappa_2e^{-\eta_0 t}\right)\left|\xi^-\right|^2 \ge \frac{\kappa_1}{2}e^{2\mu_\alpha t}\left|\xi^-\right|^2.\nonumber
\end{eqnarray}
Thus for the case when $t\geq t_1$, $|\xi| \ge 2$ and $\theta$ satisfies \eqref{range-theta}, one can deduce from the assumption \eqref{cross-section}, the estimates \eqref{dissipation-f-3}, \eqref{choose-t-1} and \eqref{lower-bound-f} that
\begin{eqnarray*}
&&\int\limits_{\mathbb{S}^2}\mathcal{B}\left(\frac{\xi\cdot\sigma}{|\xi|}\right)\left(1-\left|\hat{f}(t,\xi^-)\right|\right)d\sigma\\
&\geq& 2\pi \int\limits^{\frac {2}{e^{\mu_\alpha t}|\xi|}}_0\mathcal{B}(\cos\theta)  \left(\kappa_1 e^{2\mu_\alpha t}
-2\kappa_2e^{-\eta_0 t}\right)\left|\xi^-\right|^2 \sin\theta d\theta\\
&\ge & \pi \left(\kappa_1 e^{2\mu_\alpha t}
\right)|\xi|^2\int\limits^{\frac {2}{e^{\mu_\alpha t}|\xi|}}_0\mathcal{B}(\cos\theta)\sin^2\frac\theta 2\sin\theta d\theta\\
&\geq & \frac{2 \kappa_1 e^{2\mu_\alpha t}
}{\pi^2}|\xi|^2\int\limits^{\frac {2}{e^{\mu_\alpha t}|\xi|}}_0\mathcal{B}(\cos\theta)\theta^3 d\theta\\
&\geq& \frac{b_0 \kappa_1 e^{2\mu_\alpha t}
}{\pi^2}|\xi|^2 \int\limits^{\frac {2}{e^{\mu_\alpha t}|\xi|}}_0\theta^{1-2s} d\theta
=\frac{2^{1-2s}b_0\kappa_1e^{2s\mu_\alpha t}}{(1-s)\pi^2}|\xi|^{2s}.
\end{eqnarray*}
Here we have used the fact that $\sin\theta\geq \frac {2\theta} {\pi}$ for $0\leq\theta\leq \frac \pi 2$.
This completes the proof of
the lemma.
\end{proof}

With Lemma \ref{coercivity-estimate}, we now deduce the  uniform estimate on $f(t,v)$. Let $\varphi(t,\xi)$ be the Fourier transform of $f(t,v)$ with respect to $v$.
For any $N\in {\mathbb{N}}$, let $M(\xi)=\widetilde{M}\left(|\xi|^2\right)\left(1-X\left(\frac{|\xi|^2}{4}\right)\right)$ with $\widetilde{M}(t)=t^N$ and $X(t)$ defined as in Theorem \ref{Thm1.1}.  Multiplying \eqref{Reformulated-equation} by $2M^2(\xi)\overline{\varphi}(t,\xi)$ with $\overline{\varphi}(t,\xi)$ being the complex conjugate of $\varphi(t,\xi)$ gives
\begin{eqnarray}
&&\frac{d}{dt}\left(\int\limits_{{\mathbb{R}^3}}\left|M(\xi)\varphi(t,\xi)\right|^2d\xi\right)\label{4.9}\\
&=&2\int\limits_{{\mathbb{R}^3}}\int\limits_{{\mathbb{S}^2}}{\mathcal{B}}\left(\frac{\xi\cdot\sigma}{|\xi|}\right)
\Re\Big\{\left(\varphi(t,\xi^+)\varphi(t,\xi^-)
-\varphi(t,\xi)\right)M^2(\xi)\overline{\varphi}(t,\xi)\Big\}d\sigma d\xi\nonumber\\
&=&\underbrace{-\int\limits_{{\mathbb{R}^3}}\int\limits_{{\mathbb{S}^2}}{\mathcal{B}}\left(\frac{\xi\cdot\sigma}{|\xi|}\right)
\Big(\left|M(\xi)\varphi(t,\xi)\right|^2+\left|M(\xi^+)\varphi(t,\xi^+)\right|^2
-2\Re\left\{\varphi(t,\xi^-)\left(M(\xi^+)\varphi(t,\xi^+)\right)\overline{M(\xi)\varphi(t,\xi)}\right\}\Big)d\sigma d\xi}_{J_1}\nonumber\\
&&-\underbrace{\int\limits_{{\mathbb{R}^3}}\int\limits_{{\mathbb{S}^2}}{\mathcal{B}}\left(\frac{\xi\cdot\sigma}{|\xi|}\right)
\Big(\left|M(\xi)\varphi(t,\xi)\right|^2-\left|M(\xi^+)\varphi(t,\xi^+)\right|^2\Big)d\sigma d\xi}_{J_2}\nonumber\\
&&-2\underbrace{\int\limits_{{\mathbb{R}^3}}\int\limits_{{\mathbb{S}^2}}{\mathcal{B}}\left(\frac{\xi\cdot\sigma}{|\xi|}\right)
\Re\Big\{\varphi(t,\xi^-)\left(M(\xi)-M(\xi^+)\right)\varphi(t,\xi^+)\overline{M(\xi)\varphi(t,\xi)}\Big\}d\sigma d\xi}_{J_3}.\nonumber
\end{eqnarray}
We estimate $J_1,\ J_2 $ and $J_3$ term by term
as follows.  Since ${\textrm{Supp}}\ M(\xi)\subset \left\{\xi\in{\mathbb{R}^3}, |\xi|\geq 2\right\}$, it follows
firstly from Lemma \ref{coercivity-estimate} that
\begin{equation}\label{4.10}
J_1\lesssim - e^{2s \mu_\alpha t} \int\limits_{{\mathbb{R}^3}}|\xi|^{2s}\left|M(\xi)\varphi(t,\xi)\right|^2d\xi\,,
\end{equation}
because
\begin{eqnarray*}
&&\left|M(\xi)\varphi(t,\xi)\right|^2+\left|M(\xi^+)\varphi(t,\xi^+)\right|^2
-2\Re\left\{\varphi(t,\xi^-)\left(M(\xi^+)\varphi(t,\xi^+)\right)\overline{M(\xi)\varphi(t,\xi)}\right\}\\
&\geq &(1-|\varphi(t,\xi^-)|)\Big(\left|M(\xi)\varphi(t,\xi)\right|^2+\left|M(\xi^+)\varphi(t,\xi^+)\right|^2\Big)\\
&\geq &(1-|\varphi(t,\xi^-)|)\left|M(\xi)\varphi(t,\xi)\right|^2.
\end{eqnarray*}

For $J_2$, if we use the change of variable $\xi\to\xi^+$ for the term $M(\xi^+)\varphi(t,\xi^+)$, the cancelation lemma (Lemma 1 of \cite{Alexandre-Desvillettes-Villani-Wennberg-ARMA-2000}) implies that
\begin{eqnarray}
\left|J_2\right|&=&2\pi\left|\int\limits_{{\mathbb{R}^3}}\left|M(\xi)\varphi(t,\xi)\right|^2
\left(\int\limits_0^{\frac \pi 2}{\mathcal{B}}(\cos\theta)\sin\theta\left(1-\cos^{-3}\left(\frac\theta 2\right)\right)d\theta\right)d\xi\right|\label{4.11}\\
&\lesssim& \int\limits_{{\mathbb{R}^3}}\left|M(\xi)\varphi(t,\xi)\right|^2d\xi.\nonumber
\end{eqnarray}

For $J_3$, note that
\begin{eqnarray*}
J_3&=& \underbrace{\int\limits_{{\mathbb{R}^3}}\int\limits_{{\mathbb{S}^2}}{\mathcal{B}}\left(\frac{\xi\cdot\sigma}{|\xi|}\right)
\Re\Big\{\varphi(t,\xi^-)\left(\widetilde{M}(\xi)-\widetilde{M}(\xi^+)\right)\left\{1-X\left(\frac{|\xi^+|^2}{4}\right)
\right\}\varphi(t,\xi^+)\overline{M(\xi)\varphi(t,\xi)}\Big\}d\sigma d\xi}_{J^1_3}\\
&&+\underbrace{\int\limits_{{\mathbb{R}^3}}\int\limits_{{\mathbb{S}^2}}{\mathcal{B}}\left(\frac{\xi\cdot\sigma}{|\xi|}\right)
\Re\Big\{\varphi(t,\xi^-)\widetilde{M}(\xi)\left(X\left(\frac{|\xi^+|^2}{4}\right)-X\left(\frac{|\xi|^2}{4}\right)\right)\varphi(t,\xi^+)\overline{M(\xi)\varphi(t,\xi)}\Big\}d\sigma d\xi}_{J^2_3}.
\end{eqnarray*}
We estimate $J^1_3$ and $J^2_3$ separately.

 For $J_3^1$, since $|\xi^+|^2=|\xi|^2\cos^2\frac\theta 2\sim |\xi|^2$ for $\theta\in \left[0,\frac \pi 2\right]$ and $|\xi|^2-|\xi^+|^2=|\xi|^2\sin^2\frac\theta 2$, we have
$$
\left|\widetilde{M}(\xi)-\widetilde{M}(\xi^+)\right|\lesssim \sin^2\left(\frac\theta 2\right)\widetilde{M}(\xi^+),
$$
and consequently
\begin{eqnarray}
\left|J^1_3\right|&\lesssim& \int\limits_{{\mathbb{R}^3}}\left(\int\limits_0^{\frac\pi 2} {\mathcal{B}}(\cos\theta)\sin\theta\sin^2\left(\frac\theta 2\right)d\theta\right)\left|M(\xi)\varphi(t,\xi)\right|\cdot  \left|M(\xi^+)\varphi(t,\xi^+)\right| d\xi\label{4.12}\\
&\lesssim&\int\limits_{{\mathbb{R}^3}}\left|M(\xi)\varphi(t,\xi)\right|^2d\xi.\nonumber
\end{eqnarray}
Here we have used the fact that $|\varphi(t,\xi^-)|\leq 1$.

For $J^2_3$, since
\begin{eqnarray*}
X\left(\frac{|\xi|^2}{4}\right)-X\left(\frac{|\xi^+|^2}{4}\right)
&=&X'\left(\frac{\eta|\xi|^2+(1-\eta)|\xi^+|^2}{4}\right)\frac{|\xi|^2-|\xi^+|^2}{4}\\
&=&\frac{|\xi|^2\sin^2\left(\frac\theta 2\right)}{4}X'\left(\frac{\eta|\xi|^2+(1-\eta)|\xi^+|^2}{4}\right), \quad \eta\in[0,1],
\end{eqnarray*}
and
\begin{eqnarray*}
|\xi^+|^2 \le |\xi|^2\le 2|\xi^+|^2,\quad {\textrm{Supp}}\ \left\{X'\left(\frac{|\xi|^2}{4}\right)\right\}\subset\left\{\xi\in{\mathbb{R}^3}: 4 \leq|\xi|^2\le 8\right\},
\end{eqnarray*}
we obtain
\begin{eqnarray*}
{\textrm{Supp}}\ \left\{\widetilde{M}(\xi)\left(X\left(\frac{|\xi|^2}{4}\right)-X\left(\frac{|\xi^+|^2}{4}\right)\right)\right\}
\subset \left\{\xi\in{\mathbb{R}^3}: 4\leq|\xi|^2\leq 16 \right\}\,.
\end{eqnarray*}
Hence, there exists a constant $C_N >0$ depending on $N$ such that
\begin{eqnarray}
\left|J^2_3\right|&\le& 4^{2N} \int\limits_{|\xi|\leq 4}\left(\int\limits_0^{\frac\pi 2} {\mathcal{B}}(\cos\theta)\sin\theta\sin^2\left(\frac\theta 2\right)d\theta\right)|\varphi(t,\xi)|\cdot|\varphi(t,\xi^+)|d\xi\label{4.13}\\
&\le& C_N  \nonumber
\end{eqnarray}
because  of $|\varphi(t,\xi)|\leq 1$.
\eqref{4.12} together with \eqref{4.13} shows that there exists a $C_1 >0$  such that
\begin{equation}\label{4.14}
\left|J_3\right|\le  C_1  \int\limits_{{\mathbb{R}^3}}|\xi|^{2s}\left|M(\xi)\varphi(t,\xi)\right|^2d\xi + C_N\,.
\end{equation}

Inserting \eqref{4.10}, \eqref{4.11} and \eqref{4.14} into \eqref{4.9}, we
have, for another $C_N' >0$,  
\begin{equation*}
\frac{d}{dt}\left(\int\limits_{{\mathbb{R}^3}}\left|M(\xi)\varphi(t,\xi)\right|^2d\xi\right)+
\int\limits_{{\mathbb{R}^3}}\left|M(\xi)\varphi(t,\xi)\right|^2d\xi\leq C'_N\,,
\end{equation*}
which gives
\begin{equation}\label{4.16}
\int\limits_{{\mathbb{R}^3}}\left|M(\xi)\varphi(t,\xi)\right|^2d\xi
\leq e^{-(t-t_1)} \int\limits_{{\mathbb{R}^3}}\left|M(\xi)\varphi(t_1,\xi)\right|^2d\xi +
C_N', \quad t\geq t_1.
\end{equation}
Noting $|\varphi(\xi) | \le 1$ again, by means of
\eqref{4.16} we see that for any $N \in {\mathbb{N}}$ there exists a $C(t_1,N) >0$ such that
\begin{equation*}
\sup\limits_{t\in[t_1,\infty)}\Big\{\|f(t)\|_{H^N}\Big\}\leq C(t_1,N)<+\infty,\quad \forall N\in{\mathbb{N}}.
\end{equation*}
This and \eqref{decay-similarity} give
\begin{equation}\label{4.18}
\sup\limits_{t\in[t_1,\infty)}\Big\{\left\|f(t,\cdot)-f_{\alpha,K}(t,\cdot)\right\|_{H^N}\Big\}\leq C(t_1,N)<+\infty,\quad \forall N\in{\mathbb{N}},\quad t\geq t_1.
\end{equation}
Moreover,
\begin{eqnarray}\label{4.19}
&&\left|\hat{f}(t,\xi)-\hat{\Psi}_{\alpha,K}\left(e^{\mu_\alpha t}\xi\right)\right|^2\nonumber\\
&\lesssim& e^{-\eta_0 t}|\xi|^{2+\delta}\left|\hat{f}(t,\xi)-\hat{\Psi}_{\alpha,K}\left(e^{\mu_\alpha t}\xi\right)-\widetilde{P}(t,\xi)\right|+
\left|\widetilde{P}(t,\xi)\right|^2\\
&\lesssim&e^{-\eta_0 t}|\xi|^{2+\delta}\left|\hat{f}(t,\xi)-\hat{\Psi}_{\alpha,K}\left(e^{\mu_\alpha t}\xi\right)\right|+
e^{-At}|\xi|^4X(\xi)\left(|\xi|^\delta e^{-\eta_0 t}+e^{-At}\right).\nonumber
\end{eqnarray}
\eqref{4.18} and \eqref{4.19}  yield
\begin{eqnarray}
&&\left\|f(t,\cdot)-f_{\alpha,K}(t,\cdot)\right\|^2_{H^N}\nonumber\\
&=&\int\limits_{\mathbb{R}^3}(1+|\xi|^2)^N
\left|\hat{f}(t,\xi)-\hat{\Psi}_{\alpha,K}\left(e^{\mu_\alpha t}\xi\right)\right|^2d\xi\nonumber\\
&\lesssim&e^{-\eta_0 t}\int\limits_{\mathbb{R}^3}(1+|\xi|^2)^N|\xi|^{2+\delta}
\left|\hat{f}(t,\xi)-\hat{\Psi}_{\alpha,K}\left(e^{\mu_\alpha t}\xi\right)\right|d\xi\label{4.20}\\
&&+e^{-At}\int\limits_{\mathbb{R}^3}X(\xi)\left(e^{-\eta_0 t}+e^{-At}\right)d\xi
\lesssim e^{-\eta_0 t}.\nonumber
\end{eqnarray}
Here we have used the fact that
\begin{eqnarray*}
&&\int\limits_{\mathbb{R}^3}(1+|\xi|^2)^N|\xi|^{2+\delta}
\left|\hat{f}(t,\xi)-\hat{\Psi}_{\alpha,K}\left(e^{\mu_\alpha t}\xi\right)\right|d\xi\\
&\leq &\left(\int\limits_{\mathbb{R}^3}(1+|\xi|^2)^{2(N+1)}|\xi|^{2(2+\delta)}
\left|\hat{f}(t,\xi)-\hat{\Psi}_{\alpha,K}\left(e^{\mu_\alpha t}\xi\right)\right|^2d\xi\right)^{\frac 12}\left(\int\limits_{\mathbb{R}^3}(1+|\xi|^2)^{-2}d\xi\right)^{\frac 12}\\
&\leq& C(t_1,N), \quad t\geq t_1.
\end{eqnarray*}
\eqref{4.20} is exactly \eqref{H-N-decay} and the proof of Theorem \ref{Thm1.2} is completed.

\section{Proof of Corollary \ref{decay-Maxwellian}}
\setcounter{equation}{0}

We prove Corollary \ref{decay-Maxwellian} in this last section. Firstly of
all, note that
 Theorem \ref{Thm1.1} and Theorem \ref{Thm1.2} hold for $\alpha=2$.
The purpose of Corollary \ref{decay-Maxwellian} is to have a better
convergence  rate in the case of finite energy.

In fact, compared with  Theorem \ref{Thm1.1} and Theorem \ref{Thm1.2}, the main difference is that now the initial data $f_0(v)$ is of finite energy and consequently the corresponding global solution $F_n(t,v)$ of the Cauchy problem \eqref{3.3}-\eqref{3.4} with $H_0(v)=f_{0R}(v)$  also has finite energy, i.e.
\begin{equation}\label{5.1}
\int\limits_{\mathbb{R}^3}|v|^2F^R_n(t,v)dv=\int\limits_{\mathbb{R}^3}|v|^2f_{0R}(v)dv
\lesssim 1+\int\limits_{\mathbb{R}^3}|v|^2f_0(v)dv<+\infty.
\end{equation}
With \eqref{5.1}, it is straightforward to show that
\begin{equation}\label{5.2}
\left|\hat{F}_n(t,\xi)-1\right|\leq O(1)|\xi|^2,\quad \left|\mu(\xi)-1\right|\leq O(1)|\xi|^2.
\end{equation}
Consequently, the term $I_7$  in \eqref{3.17} can be estimated  by
\begin{equation}\label{5.3}
|I_7|\leq
\left\{
\begin{array}{rl}
O(1)|\xi|^4e^{-A_nt},&\quad |\xi|\leq 1,\\[2mm]
O(1)e^{-A_nt},&\quad |\xi|\geq 1, \quad t\in\mathbb{R}^+.
\end{array}
\right.
\end{equation}
 Here,  $G_n(t,v)=\mu(v)$.

Having  \eqref{5.3}, the proof of
Corollary \ref{decay-Maxwellian} is the same as
the ones for Theorem \ref{Thm1.1} and Theorem \ref{Thm1.2}. Thus, we
 omit the detail for brevity.

\vspace{1cm}

\begin{center}
{\bf Acknowledgment}
\end{center}
The research of the first author was supported  by Grant-in-Aid for Scientific Research No. 25400160, Japan Society of the Promotion of Science. The research of the second author was supported by the  NSFC-RGC Grant, N-CityU102/12. The research of the third author was supported  by the Fundamental Research Funds for the Central Universities of China and three grants from the National Natural Science Foundation of China under contracts 10925103, 11271160 and 11261160485.

\vskip 1cm \small

\normalsize

\end{document}